\numberwithin{equation}{section}
\newtheorem{theorem}{Theorem}[section]
\newtheorem{proposition}[theorem]{Proposition}
\newtheorem{lemma}[theorem]{Lemma}
\newtheorem{corollary}[theorem]{Corollary}
\theoremstyle{definition}
\newtheorem{definition}[theorem]{Definition}
\newtheorem{remark}[theorem]{Remark}
\begin{document}

\title{Maximal discs of Weil-Petersson class in $\mathbb{A}\mathrm{d}\mathbb{S}^{2,1}$}
\author{Jinsung Park}
\address{School of Mathematics\\ Korea Institute for Advanced Study\\
207-43\\ Hoegi-ro 85\\ Dong\-daemun-gu\\ Seoul 130-722\\
Korea }
\email{jinsung@kias.re.kr}

\thanks{2020 Mathematics Subject Classification
	30F60, 32G15, 53C50, 53A10 }

\date{\today}

\begin{abstract}{We introduce maximal discs of Weil-Petersson class in the 3-dimensional Anti-de Sitter space $\mathbb{A}\mathrm{d}\mathbb{S}^{2,1}$, whose parametrization space can be identified with
 the cotangent bundle $T^*T_0(1)$ of Weil-Petersson universal Teichm\"uller space $T_0(1)$.
We prove that the Mess map defines a symplectic diffeomorphism from $T^*T_0(1)$ to $T_0(1)\times T_0(1)$,  with respect to the canonical symplectic form
on $T^*T_0(1)$ and the difference of pullbacks of the Weil-Petersson symplectic forms from each factor of $T_0(1)\times T_0(1)$. 
Furthermore, we show that the functional given by
the anti-holomorphic energies of the induced Gauss maps associated with maximal discs of Weil-Petersson class serves as a K\"ahler potential for the restriction of the canonical symplectic form to
certain submanifolds $T_0(1)^\pm \subset T^*T_0(1)$, which bijectively parametrize the space of maximal discs of Weil-Petersson class  in
$\mathbb{A}\mathrm{d}\mathbb{S}^{2,1}$.}
\end{abstract}

\maketitle

\bibliographystyle{plain}

\section{Introduction}

In their seminal work \cite{TT06}, Takhtajan and Teo introduced a distinguished subspace of the universal Teichm\"uller space $T(1)$,  endowed with
a natural Hermitian structure. This subspace, denoted $T_0(1)$, is known as the \emph{Weil-Petersson} universal Teichm\"uller space, as it carries a rich geometric
structure induced by the Weil-Petersson inner product and its associated symplectic 2-form $\omega_{\mathrm{WP}}$. Moreover, 
in \cite{TT06}, they introduced the \emph{universal Liouville action} $S$ for  elements in $T_0(1)$,  and  proved that $S$  serves as a K\"ahler potential for $\omega_{\mathrm{WP}}$, satisfying the identity
\begin{equation}\label{e:TT06}
\partial \bar{\partial} S = -2 \sqrt{-1}\, \omega_{\mathrm{WP}} \qquad \text{over}\quad T_0(1),
\end{equation}
where $\partial$ and $\bar{\partial}$ denote the holomorphic and anti-holomorphic derivatives on $T_0(1)$. 
The identity \eqref{e:TT06} can be regarded as a universal analogue of the results previously established for  classical Liouville
actions on Teichm\"uller spaces of Riemann surfaces, as presented in \cite{TZ88a}, \cite{TZ88b}, \cite{TT03}, \cite{PTT17}, and \cite{PT18}.

An element of  $T_0(1)$ can be represented by a quasi-circle in the complex plane satisfying  a specific condition, and such curves are referred to as
 \emph{Weil-Petersson curves}.  Since their introduction in \cite{TT06},
these curves have attracted significant attention and have become a central object of study  in various  areas of mathematics. In particular,
Wang, in \cite{W19}, introduced 
the Loewner energy $I_L$ for Weil-Petersson curves,  defined via the energy of the driving function naturally associated with the Shramm Loewner evolution, and proved
the identity 
\begin{equation}\label{e:W19}
S =\pi I_L \qquad \text{over}\quad  T_0(1).
\end{equation}
This identity is particularly striking, as it equates two quantities defined through entirely different frameworks -
one arising from Teichm\"uller theory and the other from Schramm-Loewner theory.

In a recent work \cite{B20}, Bishop undertook a detailed study of Weil-Petersson curves from both
analytical and geometric perspectives.  In particular,  he introduced an invariant associated with a minimal surface in the hyperbolic 3-space $\mathbb{H}^3$ that bounds a given Weil-Petersson curve on the boundary $\partial \overline{\mathbb{H}}^3$.
However,  a limitation of this approach is the non-uniqueness of such bounding minimal surfaces in $\mathbb{H}^3$,
which complicates efforts to establish  a direct connection between the invariant of the bounding minimal surface in $\mathbb{H}^3$  and either the universal Liouville action or,  equivalently, the Loewner energy of the Weil-Petersson curve.

Motivated by the previously described developments in \cite{TT06}, \cite{W19}, and \cite{B20}, we consider maximal surfaces in the Anti de Sitter 3-space $\mathbb{A}\mathrm{d}\mathbb{S}^{2,1}$,
which can be viewed as Lorentzian analogues  of  minimal surfaces in $\mathbb{H}^3$.  
More precisely,  we consider a maximal discs of \emph{Weil-Petersson class} 
in  $\mathbb{A}\mathrm{d}\mathbb{S}^{2,1}$, whose boundary at infinity
is the graph of a quasisymmetric homeomorphism of $\mathbb{S}^1$ representing an element in $T_0(1)$.
Such a bounding maximal disc exists uniquely for each element in $T_0(1)$ by the result of \cite{BS10}. 
Given a maximal disc   $\Sigma\subset \mathbb{A}\mathrm{d}\mathbb{S}^{2,1}$,  we consider a conformal embedding 
$$
\sigma:\mathbb{D} \to  \mathbb{A}\mathrm{d}\mathbb{S}^{2,1}
$$
such that $\sigma(\mathbb{D})=\Sigma$ where $\mathbb{D}$ is the unit disc in the complex plane. 
Then,  associated with the Gauss map of this maximal conformal embedding into  $\mathbb{A}\mathrm{d}\mathbb{S}^{2,1}$, there exists a pair of harmonic maps  
$$
F_\pm:\mathbb{D}\to \mathbb{D}.
$$
Here harmonicity encodes specific geometric structures on $\mathbb{D}$,
a detailed discussion  of which will be given later.
In general, the integrals over $\mathbb{D}$ of the anti-holomorphic energy densities of the harmonic maps
$F_\pm$  diverge.  However,  in Proposition \ref{t:total-curv},  we show that
the anti-holomorphic energies of the harmonic maps $F_\pm$ are finite, provided  that 
the maximal disc $\Sigma \subset \mathbb{A}\mathrm{d}\mathbb{S}^{2,1} $ satisfies
the Weil-Petersson condition. As we will state more precisely, the anti-holomorphic energy of $F_\pm$ plays the role of the Liouville action $S$, or equivalently the Loewner energy $I_L$. 

To formulate this result, we require a geometric framework based on the symplectic geometry of the holomorphic cotangent bundle $T^*T(1)$ and so called $\mathrm{Mess}$ map.
We observe that $T^*T(1)$ parametrizes
all conformal embeddings 
into $\mathbb{A}\mathrm{d}\mathbb{S}^{2,1}$,  as we will elaborate on in Section  \ref{s:maximal-WP}.   In this context,
we introduce  the  \emph{Mess map},  following the original construction by Mess in \cite{M07}:  
\begin{equation}\label{e:Mess-whole}
\mathrm{Mess}: T^*T(1) \to T(1)\times T(1).
\end{equation}
 In this paper, we prove that the restriction
$$
\mathrm{Mess}:T^*T_0(1)\to T_0(1)\times T_0(1)
$$
is a symplectic diffeomorphism  with respect to the canonical symplectic form $\omega_{\mathrm{C}}$
on $T^*T_0(1)$ and the difference of  pullbacks of Weil-Petersson symplectic forms from each factor of $T_0(1)\times T_0(1)$. Here, the canonical symplectic form $\omega_C$ is defined as the imaginary part of
the complex canonical symplectic form $\omega_{\mathbb{C}}$.  See \eqref{e:symplectic-def} for its precise definition. 
 With this background in place, our main result concerning the $\mathrm{Mess}$ map is stated as follows:

\begin{theorem}\label{t:main thm1} The map 
$$
\mathrm{Mess}:T^*T_0(1)\to T_0(1)\times T_0(1)
$$ is a symplectic diffeomorphism. That is,
\[
\omega_{\mathrm{C}} = -\mathrm{Mess}^*_+(\omega_{\mathrm{WP}}) + \mathrm{Mess}^*_-(\omega_{\mathrm{WP}}),
\]
where $\mathrm{Mess}_\pm:= \pi_{\pm}\circ \mathrm{Mess}$,  and $\pi_\pm$ denote the projection maps from $T_0(1)\times T_0(1)$
onto the first or second factors, respectively.
\end{theorem}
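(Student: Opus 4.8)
The plan is to work in the standard model where $T^*T_0(1)$ is identified with pairs $(p,q)$, with $p\in T_0(1)$ a conformal structure on the disc $\mathbb{D}$ and $q$ a holomorphic quadratic differential playing the role of the (trace--free) second fundamental form of the associated maximal disc; tangent vectors to $T_0(1)$ are represented by harmonic Beltrami differentials and the tangent--cotangent pairing is $\langle q,\mu\rangle=\operatorname{Im}\int_{\mathbb{D}}q\,\mu$. In this model the canonical form is exact, $\omega_{\mathrm{C}}=d\theta_{\mathrm{C}}$ with $\theta_{\mathrm{C}}|_{(p,q)}(\dot p,\dot q)=\langle q,\dot p\rangle$, and the Mess map is described explicitly through the pair of harmonic Gauss maps: $\mathrm{Mess}_\pm(p,q)$ is the point of $T_0(1)$ represented by the harmonic map $F_\pm\colon(\mathbb{D},p)\to\mathbb{H}^2$ whose Hopf differential is $\pm q$, equivalently the uniformization of the left/right metrics $I((\mathrm{E}\pm B)\,\cdot,(\mathrm{E}\pm B)\,\cdot)$ of the maximal disc. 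In particular the zero section $q=0$ is sent to the diagonal of $T_0(1)\times T_0(1)$.

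Since both sides of the asserted identity are closed $2$--forms, I would first record the infinitesimal content along the zero section, where the computation is transparent. By Wolf's variational formula for the harmonic--map parametrization, the differential of $\mathrm{Mess}_\pm$ at $(p,0)$ sends $(\dot p,\dot q)$ to $\dot p\pm c\,\rho^{-1}\overline{\dot q}$ for a universal real constant $c$ fixed by the normalization of the $\mathbb{A}\mathrm{d}\mathbb{S}^{2,1}$ metric, where $\rho$ is the hyperbolic area density in the class $p$. Feeding this into $-\mathrm{Mess}_+^*(\omega_{\mathrm{WP}})+\mathrm{Mess}_-^*(\omega_{\mathrm{WP}})$, the diagonal terms cancel and the two cross terms add, reproducing $\langle\delta q,\dot p\rangle-\langle\dot q,\delta p\rangle$, i.e.\ $\omega_{\mathrm{C}}$, after matching $c$ against the factor in \eqref{e:TT06}. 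This fixes all conventions and constants and verifies the theorem to first order along the diagonal.

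To obtain the identity at an arbitrary point $(p,q)$ I would linearize at a genuinely bent configuration. The key analytic input is the derivative of the harmonic Gauss maps $F_\pm$ with nonzero background Hopf differential $\pm q$: differentiating the harmonic--map (equivalently, the Gauss) equation produces a linear elliptic equation whose solution expresses $d\,\mathrm{Mess}_\pm(\dot p,\dot q)$ as a harmonic Beltrami differential at $\mathrm{Mess}_\pm(p,q)$. Substituting these variations into the two Weil--Petersson forms and integrating by parts over $\mathbb{D}$, the bulk terms again organize into the canonical pairing while the remaining expressions cancel by the holomorphicity of $q$ and the self--adjointness of the linearized operator. Throughout, every integral converges and every boundary term on $\partial\mathbb{D}$ vanishes precisely because of the Weil--Petersson hypothesis: by Proposition \ref{t:total-curv} the anti--holomorphic energy of $F_\pm$ is finite, which provides the $L^2$ decay needed to discard boundary contributions and to keep all variations inside the Hilbert tangent spaces of $T_0(1)$.

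The main obstacle is exactly this general--point linearization together with its convergence. Controlling the Jacobi fields of the harmonic Gauss maps in the infinite--dimensional Weil--Petersson Hilbert manifold, showing that $d\,\mathrm{Mess}_\pm$ genuinely lands in the Weil--Petersson tangent space, and justifying the integrations by parts with vanishing boundary terms are the delicate points; this is where the Weil--Petersson condition, rather than the mere existence of the maximal disc from \cite{BS10}, becomes indispensable. As an organizing alternative I would, in parallel, use the exactness $\omega_{\mathrm{WP}}=d\lambda$ coming from \eqref{e:TT06} to reduce the $2$--form identity to the primitive identity $-\mathrm{Mess}_+^*\lambda+\mathrm{Mess}_-^*\lambda=\theta_{\mathrm{C}}+df$, and identify $f$ with the anti--holomorphic energy functional of the Gauss map; verifying this $1$--form identity is equivalent to the K\"ahler--potential statement announced in the abstract and gives a second, potential--theoretic route to the same conclusion.
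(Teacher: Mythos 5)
Your proposal has the right general shape --- express $d\,\mathrm{Mess}_\pm$ through first variations of the harmonic (Gauss) maps $F_\pm$, substitute into the two Weil--Petersson forms, and match the result against $\omega_{\mathrm{C}}$, with the Weil--Petersson hypothesis supplying the $L^2$ control --- and this is indeed the skeleton of the paper's argument. But the proof as written has a genuine gap exactly where the work lies: at a general point $(p,q)$ you assert that ``the bulk terms again organize into the canonical pairing while the remaining expressions cancel by the holomorphicity of $q$ and the self--adjointness of the linearized operator,'' without producing the variational formulas that would make this happen. The paper's proof consists precisely of those formulas: Lemma \ref{l:key} and Propositions \ref{p:var-nu-F}--\ref{p:var-Hopf-pm} compute $L_{\nu_f}\Phi$ and $\nu_f$ explicitly in terms of the pullbacks $F_\pm^*(\nu_\pm)$ under a \emph{simultaneous} deformation of source and both targets (the commutative diagram \eqref{e:CD-2}, using Ahlfors' identity \eqref{e:Ahl} for the variation of the hyperbolic density), and Proposition \ref{p:symplectic-form} then carries out a nontrivial wedge--product computation in which the factor $(1+|\mu_F|^2)^{-1}$ combines with $(1-|\mu_F|^4)$ to produce the Jacobian density $(1-|\mu_F|^2)e^{\phi}$ of $F_\pm$; the change of variables to $\mathbb{D}_\pm$ is what yields $\mp\mathrm{Mess}_\pm^*(\omega_{\mathrm{WP}})$. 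None of this cancellation is forced by holomorphicity of $q$ or formal self--adjointness alone, and your zero--section check (closedness of both forms plus agreement along the diagonal) does not propagate to general $(p,q)$, as you implicitly concede.

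Two further gaps: first, the theorem also asserts that $\mathrm{Mess}$ is a diffeomorphism, which you do not address at all; the paper proves surjectivity and injectivity in Theorem \ref{t:Mess-diff} (via the composition formula \eqref{e:composition} and the estimate \eqref{e:est1}--\eqref{e:mu-Phi-F}) and invertibility of the differential in Proposition \ref{p:diffeo}. Second, your proposed alternative route --- reducing to a primitive identity $-\mathrm{Mess}_+^*\lambda+\mathrm{Mess}_-^*\lambda=\theta_{\mathrm{C}}+df$ with $f$ the anti--holomorphic energy --- cannot work globally: in the paper the energy is a potential only over the slices $T_0(1)^{\pm}$ (Theorem \ref{t:K-potential}), not over all of $T^*T_0(1)$, so this would at best recover the restricted statement of Theorem \ref{t:main thm2}, not Theorem \ref{t:main thm1}. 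Note also that the paper's $\omega_{\mathrm{C}}$ is \emph{defined} by \eqref{e:symplectic-def} through the variations $\delta_k\mu$, $\delta_k\Phi$ attached to an orthonormal basis of the target tangent spaces, and its basis--independence is only established a posteriori (Remark \ref{r:independence-basis}); identifying it with your $d\theta_{\mathrm{C}}$ is an additional step your write--up takes for granted.
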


A related result to Theorem \ref{t:main thm1} was presented in \cite{KS2013}, where a similar claim 
was made for the $\mathrm{Mess}$ map at the origins of  $T^*T(1)$ and  $T(1)\times T(1)$.
While the statement  in  \cite{KS2013} was formulated in terms of the Weil-Petersson symplectic form on $T(1)$, the precise nature of such a structure  on $T(1)$ remains to be clarified,  especially given that the construction of  the  Weil-Petersson symplectic form $\omega_{\mathrm{WP}}$ on $T_0(1)$ relies essentially on the underlying
Hilbert manifold structure of $T_0(1)$ in \cite{TT06}.  This same Hilbert manifold structure on $T_0(1)$ also plays a crucial role in the proof of Theorem \ref{t:main thm1}.  See also Remark \ref{r:bounded} for a related implication of the Hilbert structure on a fiber of the tangent bundle $TT_0(1)$.

Let us denote by $T_0(1)^+$, $T_0(1)^-$ the submanifolds of  the cotangent bundle $T^*T_0(1)$,  defined as the inverse images of  $T_0(1)\times\{0\}$, $\{0\}\times T_0(1)$, respectively,
under the $\mathrm{Mess}$ map.  These submanifolds are real symplectic submanifold of the complex manifold $T^*T_0(1)$,
equipped with the restriction of $\omega_{\mathrm{C}}$. 
Each of  these submanifolds can also be interpreted as the image of a 
(non-holomorphic) section of $T^*T_0(1)$.  Consequently, we can  endow $T_0(1)^\pm$ with a holomorphic structure by identifying them 
with $T_0(1)$ via these sections. This phenomenon appears similarly  in the derivation of the equality \eqref{e:TT06}  in \cite{TT06}, where the corresponding submanifold of $T^*T_0(1)$ arises from the (non-holomorphic) section defined by the Schwarzians of  conformal welding factors of  given quasisymmetric homeomorphisms in $T_0(1)$. 
Further  discussions on these are provided in Remark \ref{r:Kahler submanifold}. 
As mentioned in Remark  \ref{r:parametrization},  the submanifolds $T_0(1)^\pm$ also admit a geometric interpretation: they provide a bijective
parametrization of the space of maximal discs of Weil-Petersson class in $\mathbb{A}\mathrm{d}\mathbb{S}^{2,1}$.   
We then obtain the following  result:

\begin{theorem} \label{t:main thm2} 
The anti-holomorphic energy $E(F_\pm)$ of the induced harmonic maps $F_\pm$ defines a finite-valued functional on $T^*T_0(1)$. Moreover,  the following identity holds:
\begin{equation}
2\, \partial\bar{\partial}  E  = \mp \sqrt{-1}\, i^*_\pm\, \omega_{\mathrm{C}} \qquad \text{over} \quad T_0(1)^\pm.
\end{equation}
Here $i_\pm :T_0(1)^\pm\to T^*T_0(1)$ denote the inclusion maps.
\end{theorem}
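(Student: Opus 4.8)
The plan is to deduce Theorem \ref{t:main thm2} from the Takhtajan--Teo potential identity \eqref{e:TT06} together with Theorem \ref{t:main thm1}, by showing that along each Mess fibre $T_0(1)^\pm$ the anti-holomorphic energy agrees, up to a pluriharmonic term, with a fixed multiple of the universal Liouville action. The finiteness of $E(F_\pm)$ over all of $T^*T_0(1)$ is already furnished by Proposition \ref{t:total-curv}, since the Weil-Petersson condition bounds precisely the anti-holomorphic energy of the Gauss-map harmonic maps; thus $E(F_\pm)$ is a well-defined real functional and the only remaining content is the Hessian identity.

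I would treat $T_0(1)^+$ in detail, the case of $T_0(1)^-$ being symmetric. By definition $\mathrm{Mess}_-\circ i_+\equiv 0$, so along $T_0(1)^+$ the right-hand Gauss structure stays at the base point and the maximal disc is controlled entirely by the left structure $g:=\mathrm{Mess}_+\circ i_+$, which Theorem \ref{t:main thm1} exhibits as a diffeomorphism of $T_0(1)^+$ onto $T_0(1)$. The heart of the argument is the identity
\begin{equation}\label{e:key-identity}
E(F_+)\circ i_+ \;=\; c\,\bigl(S\circ g\bigr)\qquad\text{modulo }\ker\bigl(\partial\bar\partial\bigr),
\end{equation}
for a universal constant $c$. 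To prove \eqref{e:key-identity} I would expand the anti-holomorphic energy density of $F_+$ in terms of the induced metric and the Hopf differential of the Gauss map, integrate it over the disc using the same radial exhaustion that regularizes $S$ in \cite{TT06}, and compare the two regularized integrals. Along $T_0(1)^+$ the maximal disc is asymptotic to the totally geodesic disc spanning the same Weil-Petersson curve, and the term by which the Gauss-map energy differs from the Liouville integrand is an exact divergence; the Weil-Petersson hypothesis is exactly what forces the subtracted divergences to match and the residual boundary integral to converge.

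Granting \eqref{e:key-identity}, the conclusion is formal. Using that $g$ is biholomorphic for the complex structure carried by the fibre $T_0(1)^+$, I apply $\partial\bar\partial$ to \eqref{e:key-identity} and invoke \eqref{e:TT06} to obtain
\[
2\,\partial\bar\partial\,E(F_+) \;=\; 2c\,g^*\bigl(\partial\bar\partial S\bigr) \;=\; -4\sqrt{-1}\,c\;g^*\omega_{\mathrm{WP}}.
\]
On the other hand, pulling Theorem \ref{t:main thm1} back by $i_+$ and using $\mathrm{Mess}_-\circ i_+\equiv 0$ gives $i^*_+\omega_{\mathrm{C}}=-g^*\omega_{\mathrm{WP}}$, so the two displays are compatible precisely when $c=-\tfrac14$, yielding $2\,\partial\bar\partial E(F_+)=-\sqrt{-1}\,i^*_+\omega_{\mathrm{C}}$. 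The computation on $T_0(1)^-$ runs identically with the two factors interchanged; the opposite sign of the $\mathrm{Mess}_-$ term in Theorem \ref{t:main thm1} produces $i^*_-\omega_{\mathrm{C}}=+\,h^*\omega_{\mathrm{WP}}$ for $h:=\mathrm{Mess}_-\circ i_-$, so the same constant $c=-\tfrac14$ now gives $2\,\partial\bar\partial E(F_-)=+\sqrt{-1}\,i^*_-\omega_{\mathrm{C}}$, matching the sign $\mp$ in the statement.

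The main obstacle is \eqref{e:key-identity}. Both sides are differences of divergent quantities, so the comparison is not pointwise but a matching of asymptotic expansions of the energy density and the Liouville integrand near $\mathbb{S}^1$; the subtle points are verifying that the discrepancy between the two regularizations is pluriharmonic, and hence annihilated by $\partial\bar\partial$, and that the leftover boundary term is convergent. Here the Weil-Petersson condition---equivalently the square-integrability of the relevant Schwarzian data---is indispensable, and the finiteness already secured in Proposition \ref{t:total-curv} is what renders these expansions controllable.
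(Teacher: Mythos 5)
There is a genuine gap: your entire argument funnels through the unproven identity $E(F_+)\circ i_+ = c\,(S\circ g)$ modulo pluriharmonic functions, and you offer only a plan (matching regularized asymptotic expansions of the energy density and the Liouville integrand near $\mathbb{S}^1$) rather than a proof. That identity is not a lemma available in the paper or in \cite{TT06}; it is essentially a potential-level strengthening of Corollary \ref{c:potentials}, i.e.\ of the very statement you are trying to prove, so the reduction is close to circular. This is compounded by the way you fix the constant: you determine $c=-\tfrac14$ by demanding consistency with the conclusion of Theorem \ref{t:main thm2}, which means the constant is not derived independently and the argument cannot certify the theorem.

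A second, independent problem is the step $\partial\bar\partial\bigl(S\circ g\bigr)=g^*\bigl(\partial\bar\partial S\bigr)$, which requires $g=\mathrm{Mess}_+\circ i_+$ to be holomorphic. The paper establishes only that $\mathrm{Mess}$ is a diffeomorphism (Theorem \ref{t:Mess-diff}, Proposition \ref{p:diffeo}) and a symplectomorphism (Theorem \ref{t:symplectic-relation}); holomorphy is never claimed, and for parametrizations built from harmonic maps and minimal Lagrangian extensions one expects only real-analyticity. Without holomorphy of $g$, both the pluriharmonic ambiguity in your key identity and the commutation of $g^*$ with $\partial\bar\partial$ collapse. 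The paper avoids all of this by a direct computation: Propositions \ref{p:hol-var} and \ref{p:antihol-var} evaluate the first and second variations of the anti-holomorphic energy density under the fibre conditions $F_\mp^*(\nu_\mp)=0$, producing the Hessian $2\int_\Sigma e^\phi(1+|\mu_F|^2)\nu\bar\mu\,d^2z$, and the proof of Theorem \ref{t:K-potential} independently rewrites $i_\pm^*\omega_{\mathrm{C}}$ in exactly these terms via \eqref{e:symplectic-form} and \eqref{e:var-nu+}; the theorem follows by comparison, with no appeal to \eqref{e:TT06}. (Your finiteness claim is essentially fine, though the statement over all of $T^*T_0(1)$ is Theorem \ref{t:energy-finite}, proved by the change-of-variables estimate \eqref{e:est1}--\eqref{e:mu-Phi-F} in the proof of Theorem \ref{t:Mess-diff}, not directly by Proposition \ref{t:total-curv}, which concerns the induced conformal structure.)
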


Combining equations  \eqref{e:TT06}, \eqref{e:W19} with Theorems \ref{t:main thm1} and  \ref{t:main thm2}, we obtain the following:

\begin{corollary}\label{c:potentials}
The following  identity holds over $T_0(1)^{\pm}$:
\begin{equation}\label{e:potentials}
4\,\partial\bar{\partial}  E = i^*_\pm \mathrm{Mess}_{\pm}^* (\partial\bar{\partial} S) =\pi\, i^*_\pm \mathrm{Mess}_{\pm}^*(\partial \bar{\partial} I_L).
\end{equation}
\end{corollary}

\begin{remark}
Theorem \ref{t:main thm2}  states that $T_0(1)^\pm$  admits a K\"ahler structure, where the complex structure is inherited via its identification with $T_0(1)$, and the K\"ahler form is given by $i^* \omega_{\mathrm{C}}$, up to a constant.  Corollary \ref{c:potentials} further shows that this K\"ahler structure on $T_0(1)^\pm$ is symplectically equivalent to the 
Weil-Petersson structure on $T_0(1)$. The discrepancy between these K\"ahler structures stems from
the intrinsic differences of the anti-holomorphic energy functional $E$ and the Liouville action $S$, or equivalently the Loewner energy $\pi I_L$.  For a more detailed discussion on
the relation between $E$ and $S=\pi I_L$, we refer the reader to Remark \ref{r:relation-E-S}.
\end{remark}

Here is an explanation of structure of this paper. In Section \ref{s:basic}, we review foundational material on the universal Teichm\"uller space $T(1)$ and
the Weil-Petersson universal Teichm\"uller space $T_0(1)$.
Section \ref{s:maximal-WP} introduces the notion of the maximal discs of Weil-Petersson class in  $\mathbb{A}\mathrm{d}\mathbb{S}^{2,1}$ and investigates their fundamental properties.
Section \ref{s:variation} and \ref{s:anti-holomorphic} are devoted to the proofs of the main theorems,  employing variational techniques  which played crucial roles in \cite{TZ88a}, \cite{TZ88b}, \cite{TT03},
\cite{TT06}, \cite{PTT17},  \cite{PT18}, and \cite{Park21}.  Finally, the appendix provides a brief overview of the 3-dimensional Anti de Sitter space  $\mathbb{A}\mathrm{d}\mathbb{S}^{2,1}$.

{{\bf{Acknowledgments}} 
I would like to express my deep gratitude to Graham Andrew Smith for pointing out a gap in the proof of Proposition \ref{t:total-curv} and for bringing to my attention Theorem B in his paper \cite{DMSST26}, which plays a crucial role in completing the proof.}

\section{Universal Teichm\"uller Space}\label{s:basic}
This section provides a brief introduction to the universal Teichmüller space and the Weil-Petersson universal Teichmüller space. For further details, we refer to Chapter 1 of \cite{L87} and Chapter 16 of \cite{GL00} for the universal Teichmüller space, and Chapter 1 of \cite{TT06} for the Weil-Petersson universal Teichmüller space.

\subsection{Universal Teichm\"uller space}\label{ss:UTC}
Let $\mathrm{QS}(\mathbb{S}^1)$ denote the group of the quasisymmetric homeomorphism of the circle $\mathbb{S}^1$. The \textit{universal Teichm\"uller space} is then defined by
\begin{equation}\label{e:def-QS}
T(1):= \mathrm{Mob}(\mathbb{S}^1) \backslash \mathrm{QS}(\mathbb{S}^1)
\end{equation}
where $\mathrm{Mob}(\mathbb{S}^1)\cong \mathrm{PSL}(2,\mathbb{R})$ is the subgroup of the M\"obius transformation group $\mathrm{PSL}(2,\mathbb{C})$ that preserves $\mathbb{S}^1$ and
 acts on $\widehat{\mathbb{C}}=\mathbb{C}\cup\{\infty\}$.

Let $\mathbb{D}=\{\, z\in \mathbb{C}\, | \, |z|<1\, \}$ denote the open unit disk and let $\mathbb{D}^*=\{\, z\in\mathbb{C} \, | \, |z|>1\, \}$ be its exterior in $\widehat{\mathbb{C}}$. 
Denote by $L^\infty(\mathbb{D})$ and $L^\infty(\mathbb{D}^*)$ the complex Banach
spaces of bounded Beltrami differentials on $\mathbb{D}$ and $\mathbb{D}^*$ respectively. Let $L^\infty(\mathbb{D})_1$ denote the unit ball in $L^\infty(\mathbb{D})$. For a given  Beltrami differential $\mu\in L^\infty(\mathbb{D})_1$, we extend it to $\mathbb{D}^*$ by the reflection
\begin{equation}
\mu(z)= \overline{\mu\Big(\frac{1}{\bar{z}}\Big)} \frac{z^2}{\bar{z}^2}, \qquad \text{for} \quad z\in\mathbb{D}^*.
\end{equation}
Let $w_{\mu}:\widehat{\mathbb{C}}\to \widehat{\mathbb{C}}$
 be the solution of the Beltrami differential equation 
\begin{equation}\label{e:beltrami-d-e}
\partial_{\bar{z}} w_{\mu} = \mu\, \partial_z w_{\mu}
\end{equation}
with fixed points $1,-1, -\sqrt{-1}$. Then $w_{\mu}$ preserves $\mathbb{S}^1$ and satisfies $w_{\mu}|_{\mathbb{S}^1} \in \mathrm{QS}(\mathbb{S}^1)$. Conversely, by extension theorem of Beurling-Ahlfors, any quasisymmetric homeomorphism in $\mathrm{QS}(\mathbb{S}^1)$ can be extended to a quasiconformal homeomorphism $w_{\mu}$ of $\mathbb{D}$ 
for some $\mu\in L^\infty(\mathbb{D})_1$. This leads to the following description of the universal Teichm\"uller space:,
\begin{equation}\label{e:projection}
T(1)=L^\infty(\mathbb{D})_1/ \sim
\end{equation}
where $\mu \sim \nu$ if and only if $w_{\mu}|_{\mathbb{S}^1} = w_{\nu}|_{\mathbb{S}^1}$. We denote the equivalence class of $\mu$ by $[\mu]\in T(1)$. The space $T(1)$ admits a unique structure of a complex Banach manifold such that the projection map 
\[
\mathrm{P}: L^\infty(\mathbb{D})_1\to T(1)
\]
is a holomorphic submersion. The differential of $\mathrm{P}$ at the origin
\[
D_0\mathrm{P}:L^\infty(\mathbb{D})\to T_{[0]}T(1)
\]
is a complex linear surjection onto the holomorphic tangent space of $T(1)$. The kernel of $D_0 \mathrm{P}$ is the subspace $\mathcal{N}(\mathbb{D})$ of infinitesimally trivial Beltrami differentials.

For a given Beltrami differential $\mu\in L^\infty(\mathbb{D})_1$, extend it to be zero on $\mathbb{D}^*$. Let $w^{\mu}$ be the unique solution to the Beltrami differential equation 
$$
\partial_{\bar{z}} w^{\mu} = \mu\, \partial_z w^{\mu}
$$ 
with the normalization $w^\mu(0)=0$, $(w^\mu)_{z}(0)=1$ and $(w^\mu)_{zz}(0)=0$. Then $w^{\mu}$ is conformal on $\mathbb{D}^*$. 
This leads to the following characterization of the universal Teichm\"uller space:
\begin{equation}\label{e:projection-1}
T(1)=L^\infty(\mathbb{D})_1/ \sim
\end{equation}
where $\mu \sim \nu$ if and only if $w^{\mu}|_{\mathbb{D}} = w^{\nu}|_{\mathbb{D}}$.
This characterization is equivalent to the one in \eqref{e:projection} since $w_\mu|_{\mathbb{S}^1}=w_\nu|_{\mathbb{S}^1}$ if and only if
$w^{\mu}|_{\mathbb{D}} = w^{\nu}|_{\mathbb{D}}$.

Now, we define the Bers embedding of $T(1)$ into the complex Banach space
\begin{equation*}
A_\infty(\mathbb{D}^*)=\{\, \phi:\mathbb{D}^* \to \mathbb{C}\, | \, \text{holomorphic},\ \mathrm{sup}_{\mathbb{D}^*} |\phi| e^{-\psi} < \infty \, \}
\end{equation*}
where $e^{\psi}$ denotes the hyperbolic density function on $\mathbb{D}^*$.
For a holomorphic map $f$ on an open domain in $\widehat{\mathbb{C}}$,
the Schwarzian of $f$ is defined by
\begin{equation}
\mathcal{S}(f)= \left(\frac{f_{zz}}{f_z}\right)_z -\frac12  \left(\frac{f_{zz}}{f_z}\right)^2.
\end{equation}
For every $\mu\in L^\infty(\mathbb{D})_1$, the holomorphic function $\mathcal{S}(w^\mu|_{\mathbb{D}^*})$
belongs to $A_\infty(\mathbb{D}^*)$ by Kraus-Nehari inequality. The Bers embedding is then defined by
\begin{equation}\label{e:Bers-embedding}
\beta([\mu])= \mathcal{S}(w^\mu|_{\mathbb{D}^*})\in A_\infty(\mathbb{D}^*).
\end{equation}
This embedding is a holomorphic map between complex Banach manifolds. 

The Banach space of bounded harmonic Beltrami differential on 
$\mathbb{D}$ is defined by
\[
\Omega^{-1,1} (\mathbb{D})=\{\, \mu\in L^\infty(\mathbb{D})\, | \, \mu= e^{-\psi} \bar{\phi}, \, \phi\in A_\infty(\mathbb{D})\, \}
\]
where $e^{\psi}$ denotes the hyperbolic density function on $\mathbb{D}$ and $A_\infty(\mathbb{D})$ is defined analogously to $A_\infty(\mathbb{D}^*)$.
The decomposition
\begin{equation}\label{e:space-decomp}
L^\infty(\mathbb{D})=\Omega^{-1,1}(\mathbb{D})\oplus \mathcal{N}(\mathbb{D})
\end{equation}
identifies
the holomorphic tangent space $T_{[0]}T(1)\cong L^\infty(\mathbb{D})/\mathcal{N}(\mathbb{D})$ at the origin in $T(1)$ as 
\begin{equation}
T_{[0]} T(1) \cong \Omega^{-1,1}(\mathbb{D}).
\end{equation}
The complex linear mapping $D_0\beta$ induces
an isomorphism $\Omega^{-1,1}(\mathbb{D})\cong A_\infty(\mathbb{D}^*)$ between the holomorphic tangent spaces
to $T(1)$ and $A_\infty(\mathbb{D}^*)$ at the origin.

The unit ball $L^\infty(\mathbb{D})_1$ carries a group structure induced by
the composition of quasiconformal maps. The group law $\lambda=\nu\star \mu^{-1}$ is defined via
\[
w_\lambda= w_\nu\circ w_\mu^{-1}.\]
The explicit formula of the group law is given by 
\begin{equation}\label{e:composition}
w_{\mu}^* (\lambda):=\lambda\circ w_{\mu}\frac{\overline{\partial_z w_\mu}}{{ \partial_z w_\mu}} = \frac {\nu-\mu}{1-\nu\bar{\mu}} \, .
\end{equation} 
For $\mu\in L^\infty(\mathbb{D})_1$, using this group structure,
we define the right translation $R_\mu$ on $L^\infty(\mathbb{D})_1$. The induced right translations on $T(1)$
\begin{equation}
R_{[\mu]}: T(1) \longrightarrow T(1), \qquad [\lambda]\mapsto [\lambda\star\mu]
\end{equation}
are biholomorphic automorphisms of $T(1)$.
Consequently, the differential
\[
D_0R_{[\mu]}: T_{[0]} T(1)\to T_{[\mu]} T(1)
\]
is a complex linear isomorphism between the holomorphic tangent spaces, leading to the identification 
$T_{[\mu]}T(1) \cong \Omega^{-1,1}(\mathbb{D})$.

For $\mu\in L^\infty(\mathbb{D})_1$,
let $U_\mu$ be the image of the ball of radius 2 in $A_{\infty}(\mathbb{D}^*)$ under the map $
h_\mu^{-1}=\mathrm{P}\circ R_\mu\circ \Lambda$
where $\Lambda$ is the inverse of $D_0\beta$. 
Then the maps 
\[
h_{\mu\nu}:= h_\mu\circ h_\nu^{-1}: h_\nu(U_\mu\cap U_\nu) \to h_\mu(U_\mu\cap U_\nu)
\]
are biholomorphic as maps on the Banach space $A_\infty(\mathbb{D}^*)$. The structure of $T(1)$ as a complex Banach manifold, modeled on the Banach space $A_\infty(\mathbb{D}^*)$, is explicitly described by the complex-analytic atlas given by the open covering
\begin{equation*}
T(1)= \bigcup_{\mu\in L^\infty(\mathbb{D})_1} U_\mu
\end{equation*}
with coordinate maps $h_\mu$ and the transition maps $h_{\mu\nu}$. 
Complex coordinates on $T(1)$, defined by the coordinate charts $(U_\mu, h_\mu)$, are referred to as \emph{Bers coordinates}.
For every $\nu\in\Omega^{-1,1}(\mathbb{D})$, let $\phi=D_0\beta(\nu)$ and define a holomorphic vector field $\frac{\partial}{\partial \epsilon_\nu}$ on $U_0$ by setting
\[
Dh_0\Big(\frac{\partial}{\partial \epsilon_\nu}\Big)=\phi
\]
at all points in $U_0$.  At every point $[\mu]\in U_0$, identified with the corresponding harmonic Beltrami differential $\mu$, the vector
field $\frac{\partial}{\partial \epsilon_\nu}$ in terms of the Bers coordinates of $U_\mu$ correspond to
\[
\tilde{\phi} =D_\mu h_\mu \Big(\frac{\partial}{\partial \epsilon_\nu}\Big) =
\Big( D_\mu h_\mu(D_\mu h_0)^{-1}\Big)(\phi)= D_0(\beta\circ\mathrm{P})(D_\mu R_{\mu}^{-1}(\Lambda(\phi))).
\]
Using identification $\Omega^{-1,1}(\mathbb{D})\cong A_\infty(\mathbb{D}^*)$, provided by $D_0\beta$,
\begin{equation}\label{e:local-vector}
\frac{\partial}{\partial\epsilon_\nu}\Big\vert_{\mu} =D_0\mathrm{P} (D_{\mu} R_{\mu}^{-1}(\nu)) = D_0\mathrm{P}(R(\nu,\mu)),
\end{equation}
where
\begin{equation}\label{e:pullback}
R(\nu,\mu):=D_{\mu} R_{\mu}^{-1}(\nu)= \Big(\frac{\nu}{1-|\mu|^2} \frac{(w_{\mu})_z}{(\bar{w}_{\mu})_{\bar{z}}}\Big)\circ w_\mu^{-1}.
\end{equation}

\subsection{Weil-Petersson universal Teichm\"uller space}\label{ss:Hilbert}
Consider the space
\begin{equation*}
A_2(\mathbb{D}^*)=\Big\{\, \phi :\mathbb{D}^* \to \mathbb{C}\, | \, \text{holomorphic}, \int_{\mathbb{D}^*} |\phi|^2 e^{-\psi} \, d^2z < \infty \, \Big \} \subset A_\infty(\mathbb{D}^*)
\end{equation*}
where $d^2z=dxdy$ for $z=x+\sqrt{-1}y$.
Let $\mathcal{O}(\mathbb{D})_1$  denote the subgroup of $L^\infty(\mathbb{D})_1$ generated by $\mu\in \Omega^{-1,1}(\mathbb{D})$ with $||\mu||_\infty <\delta$ where $\delta$ is a positive real number satisfying
condition in Corollary 2.6 in \cite{TT06}. For each  $\mu\in\mathcal{O}(\mathbb{D})_1$,
let $V_\mu\subset U_\mu \subset T(1)$ be the image under the map $h_\mu^{-1}=\mathrm{P}\circ R_\mu\circ \Lambda$
of the open ball of radius $\sqrt{\pi/3}$  centered at the origin in $A_2(\mathbb{D}^*)$.
Define 
\begin{equation}\label{e:coordinate-map}
\tilde{h}_\mu=h_\mu\Big|_{V_\mu}: V_\mu\to A_2(\mathbb{D^*}).
\end{equation}
Now, consider the covering
\[
T(1)=\bigcup_{\mu\in\mathcal{O}(\mathbb{D})_1} V_\mu
\]
with the coordinate maps $\tilde{h}_\mu:V_\mu\to A_2(\mathbb{D}^*)$ and the transition maps
\[
\tilde{h}_{\mu\nu}=\tilde{h}_\mu\circ \tilde{h}_\nu^{-1}: \tilde{h}_\nu(V_\mu\cap V_\nu)\to \tilde{h}_\mu(V_\mu\cap V_\nu).
\]

 By Theorem 2.10 in \cite{TT06}, the above covering gives $T(1)$ the structure of a complex Hilbert manifold, modeled on the Hilbert
space $A_2(\mathbb{D}^*)$.  However,  $T(1)$ is not connected with respect to the topology induced by the Hilbert manifold structure.

For the Hilbert space of harmonic Beltrami differentials on $\mathbb{D}$,
\begin{equation*}
H^{-1,1}(\mathbb{D})=\{\, \mu \in L^\infty(\mathbb{D})\, | \, \mu= e^{-\psi} \bar{\phi}, \, \phi\in A_2(\mathbb{D})\, \} \subset \Omega^{-1,1} (\mathbb{D}),
\end{equation*}
and for $[\mu]\in T(1)$, let $D_0 R_{[\mu]} (H^{-1,1}(\mathbb{D}))$ be the subspace of the tangent space
$T_{[\mu]}T(1)= D_0R_{[\mu]} (\Omega^{-1,1}(\mathbb{D}))$, which is equipped with a Hilbert space structure isomorphic to $H^{-1,1}(\mathbb{D})$.
Let $\mathcal{D}_T$ be the distribution on $T(1)$, defined by the assignment
\[
T(1) \ni [\mu] \mapsto D_0R_{[\mu]}(H^{-1,1}(\mathbb{D})) \subset T_{[\mu]} T(1).
\]
By Theorem 2.3 in \cite{TT06}, for every $[\mu]\in T(1)$, the linear mapping
\[
D_0(\beta\circ R_{[\mu]}): H^{-1,1}(\mathbb{D}) \to A_2(\mathbb{D}^*)
\]
is a topological isomorphism.
Moreover, by Theorem 2.13 in \cite{TT06}, the Bers embedding 
$$\beta:T(1)\to \beta(T(1))\subset A_\infty(\mathbb{D}^*)$$
is a biholomorphic mapping of Hilbert manifolds. As a result, the distribution $\mathcal{D}_T$
on $T(1)$ is integrable. The integral manifolds correspond to the components $(\phi+A_2(\mathbb{D}^*))\cap \beta(T(1))$.
For every $[\mu]\in T(1)$, we denote by $T_{[\mu]}(1)$ the component of the Hilbert manifold $T(1)$ containing $[\mu]$.
The Hilbert manifold $T_{[\mu]}(1)$ is the integral manifold of the
distribution $\mathcal{D}_T$ passing through $[\mu]\in T(1)$. In particular, the component of the origin $0\in T(1)$ is denoted by
$T_0(1)$, and is called the \emph{Weil-Petersson universal Teichm\"uller space}.

The Weil-Petersson metric on the Hilbert manifold $T_0(1)$ is a Hermitian metric defined 
by the Hilbert space inner product on tangent space, which is identified with
the Hilbert space $H^{-1,1}(\mathbb{D})$ via right translation. Thus, the Weil-Petersson metric is a right invariant metric on $T_0(1)$,
defined at the origin of $T_0(1)$ by
\begin{equation}
\langle \mu, \nu \rangle = \int_{\mathbb{D}} \mu \bar{\nu}\, e^{\psi} \, d^2z, \qquad \text{for}\quad \mu,\nu\in H^{-1,1}(\mathbb{D})=T_0T_0(1).
\end{equation}
For every $\mu\in H^{-1,1}(\mathbb{D})$, there corresponds a vector field $\frac{\partial}{\partial \epsilon_\mu}$ over $V_0$.
For every $\kappa\in V_0$, we define the inner product
\[
g_{\mu\bar{\nu}}(\kappa)= \Big\langle \frac{\partial}{\partial \epsilon_\mu}\Big\vert_{\kappa}, \frac{\partial}{\partial \epsilon_\nu}\Big\vert_{\kappa} \Big\rangle_{WP}
=\int_{\mathbb{D}} D_0\mathrm{P}(R(\mu,\kappa)) \overline{D_0\mathrm{P}(R(\nu,\kappa))}\, e^{\psi} \, d^2z
\]
where $R(\mu,\kappa)$ is given in \eqref{e:pullback}.
The Weil-Petersson metric extends to other charts $V_\mu$ by right translations.

\section{Maximal discs of Weil-Petersson class}\label{s:maximal-WP}

In this section, we introduce and study maximal discs of Weil-Petersson class in the three-dimensional Anti de Sitter space $\mathbb{A}\mathrm{d}\mathbb{S}^{2,1}$. We interpret
the space of  the conformal embeddings of these maximal discs as the cotangent bundle $T^*T_0(1)$ of the Weil-Petersson universal Teichm\"uller space $T_0(1)$, and show that it can be identified with $T_0(1)\times T_0(1)$ via the map originally defined by Mess in \cite{M07}. For basic terminology related to the geometry of $\mathbb{A}\mathrm{d}\mathbb{S}^{2,1}$,
we refer to the appendix of this paper.  For a more extensive introduction to Anti-de Sitter geometry, we refer to \cite{BS20}.

\subsection{Maximal discs and Gauss maps}

We fix the conformal structure on the standard unit disc $\mathbb{D}\subset \mathbb{C}$
and denote it by $\mathbb{D}_w$ where the coordinate map $w$ is the identity on $\mathbb{D}$. 
Given a Beltrami differential $\mu$ representing an element of the universal Teichm\"uller space $T(1)$, we denote
by $\mathbb{D}_z$ the corresponding unit disc equipped with the coordinate map $z$, which is the quasi-conformal map $w_{\mu}: \mathbb{D} \to \mathbb{D}$ introduced in the subsection \ref{ss:UTC}.   
Hence, for instance,  $w_\mu:\mathbb{D}_z \to\mathbb{D}_w$ is a conformal map, whereas  $w_\mu:\mathbb{D}_w\to\mathbb{D}_w$ is not.

For a given $\mathbb{D}_z$, we denote its spacelike conformal embedding into the three-dimensional Anti de Sitter space $\mathbb{A}\mathrm{d}\mathbb{S}^{2,1}$ by
\begin{equation}\label{e:embbeding-rho}
\sigma: \mathbb{D}_z \to \mathbb{A}\mathrm{d}\mathbb{S}^{2,1}.
\end{equation}
 The conformal embedding $\sigma$ of the unit disc $\mathbb{D}_z$ is called a \emph{maximal} if the mean curvature identically vanishes, that is, $H_\sigma=0$ on 
 the spacelike surface $\Sigma:=\sigma(\mathbb{D})$. The image of a conformal embedding with this property is also refer to as a \emph{maximal disc}.
 From now on, we may regard the pullback of a function (or tensor) on $\Sigma$ via $\sigma$ as a function (or tensor) on $\mathbb{D}_z$.  In particular, we can interpret the mean curvature over $\Sigma$ as a function on $\mathbb{D}_z.$

For a spacelike conformal embedding $\sigma: \mathbb{D}_z \to \mathbb{A}\mathrm{d}\mathbb{S}^{2,1}$,  the associated Gauss map,  introduced in \eqref{e:A-Gauss},  is given by
$$G=(G_+,G_-): \mathbb{D}_z \to \mathbb{D}_w\times \mathbb{D}_w. 
$$ 

By Proposition 3.1 and Theorem 3.3 of \cite{AAW00}, we have:

\begin{proposition}\label{p:basics-G}
For a maximal conformal embedding $\sigma: \mathbb{D}_z \to \mathbb{A}\mathrm{d}\mathbb{S}^{2,1}$, the following hold:
\begin{enumerate}
\setlength{\itemsep}{7pt}
\item{ Each component of the Gauss map 
$G_\pm:\mathbb{D}_z\to \mathbb{D}_w$ is a harmonic map,}
\item{The pullback metric of $g_{\mathbb{A}\mathrm{d}\mathbb{S}^{2,1}}$ under $\sigma$, which is the fundamental form $I$, is given by
$$ \sigma^*(g_{\mathbb{A}\mathrm{d}\mathbb{S}^{2,1}})=I=e^\phi|dz|^2:= e^{\psi\circ G_\pm} \big|(G_\pm)_z\big|^2 |dz|^2, $$
where $e^{\psi(w)}|dw|^2$ is the hyperbolic metric on $\mathbb{D}_w$.}
\item{The Hopf differential of $G_\pm$  is given by
\[
\Phi(G_\pm)=e^{\psi\circ G_\pm} (G_\pm)_z (\overline{G_\pm})_z\, dz^2,
\]
and it satisfies the relation $\Phi(G_+)=-\Phi(G_-)$.}
\end{enumerate}

\end{proposition}

Note that the harmonic map equation for $G_{\pm}$ is given by
\begin{equation}\label{e:harmonic map}
(G_\pm)_{z\bar{z}} + (\psi_w\circ G_\pm) (G_\pm)_z (G_\pm)_{\bar{z}} =0.
\end{equation}
As evident from this equation,
the harmonicity condition on a Riemann surface depends 
on the metric structure  of the target Riemann surface
and the conformal structure of the source Riemann surface, rather than the metic structure of the source itself. 
The underlying fact for Proposition \ref{p:basics-G} is that the pullback metric $G_\pm^*(e^{\psi(w)} |dw|^2)$ decomposes as:
\begin{equation}\label{e:pullback-metric-pm}
   G_\pm^*(e^{\psi(w)} |dw|^2) = (1+|\mu_{G_\pm}|^2) I +\Phi(G_\pm) + \overline{\Phi(G_\pm)},
   \end{equation}
   where $\mu_{G_\pm}$ denotes the Beltrami differential of $G_\pm$.
The trace part corresponds to $I$ up to the conformal factor $(1+|\mu_{G_\pm}|^2)$, and the off-trace parts are given by the Hopf differential $\Phi(G_\pm)$, as expected.

The second fundamental form of $\sigma: \mathbb{D}_z\to \mathbb{A}\mathrm{d}\mathbb{S}^{2,1}$ is given by:
\begin{equation}\label{e:I-II}
II=\frac{\sqrt{-1}}{2}\big(\Phi(G_+)-\overline{\Phi(G_+)}\big).
\end{equation}
For further details on these equalities,
we refer to Section 3 of \cite{AAW00} and Section 5 of \cite{KS2013}.

By the Gauss equation for a maximal conformal embedding $\sigma: \mathbb{D}_z\to \mathbb{A}\mathrm{d}\mathbb{S}^{2,1}$, we obtain the following relation:
\begin{equation}\label{e:Gauss}
2\phi_{z\bar{z}}= e^\phi - e^{-\phi} |\Phi(G_\pm)|^2.
\end{equation}
Note that this equation can also be derived from the harmonic map equation for $G_\pm$ given in \eqref{e:harmonic map}.
We also have the following proposition:

\begin{proposition}
The Gaussian curvature $K_\phi$ of the metric $I=e^{\phi}|dz|^2$ on $\mathbb{D}_z$
is given by
\begin{equation}\label{e:Gaussian-curv}
K_\phi:=-2\phi_{z\bar{z}}\, e^{-\phi} = -1+|\mu_{G_\pm}|^2.
\end{equation}
\end{proposition}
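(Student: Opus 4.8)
The plan is to read off $K_\phi$ directly from the Gauss equation \eqref{e:Gauss} and then to recognize the residual term as the squared modulus of the complex dilatation of $G_\pm$.

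First I would substitute \eqref{e:Gauss} into the defining formula $K_\phi=-2\phi_{z\bar{z}}\,e^{-\phi}$. Since \eqref{e:Gauss} gives $2\phi_{z\bar{z}}=e^\phi-e^{-\phi}|\Phi(G_\pm)|^2$, multiplying by $-e^{-\phi}$ yields
\begin{equation*}
K_\phi=-e^{-\phi}\bigl(e^\phi-e^{-\phi}|\Phi(G_\pm)|^2\bigr)=-1+e^{-2\phi}\,|\Phi(G_\pm)|^2 .
\end{equation*}
Thus the proposition is equivalent to the pointwise identity $e^{-2\phi}|\Phi(G_\pm)|^2=|\mu_{G_\pm}|^2$, where $\mu_{G_\pm}=(G_\pm)_{\bar{z}}/(G_\pm)_z$ is the Beltrami coefficient of the harmonic map $G_\pm$.

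To prove this identity I would substitute the explicit conformal factor from \eqref{e:pullback metric}, namely $e^\phi=e^{\psi\circ G_\pm}\,|(G_\pm)_z|^2$, together with the Hopf differential $\Phi(G_\pm)=e^{\psi\circ G_\pm}(G_\pm)_z\,\overline{(G_\pm)_{\bar{z}}}\,dz^2$, where I use $(\overline{G_\pm})_z=\overline{(G_\pm)_{\bar{z}}}$. Taking the squared modulus of the coefficient of $dz^2$ gives $|\Phi(G_\pm)|^2=e^{2\psi\circ G_\pm}\,|(G_\pm)_z|^2\,|(G_\pm)_{\bar{z}}|^2$, so that
\begin{equation*}
e^{-2\phi}\,|\Phi(G_\pm)|^2=\frac{e^{2\psi\circ G_\pm}\,|(G_\pm)_z|^2\,|(G_\pm)_{\bar{z}}|^2}{e^{2\psi\circ G_\pm}\,|(G_\pm)_z|^4}=\frac{|(G_\pm)_{\bar{z}}|^2}{|(G_\pm)_z|^2}=|\mu_{G_\pm}|^2 .
\end{equation*}
Because $\Phi(G_+)=-\Phi(G_-)$ have equal modulus and the induced metric $e^\phi$ is a single geometric object on $\Sigma$ (so that $e^{\psi\circ G_+}|(G_+)_z|^2=e^{\psi\circ G_-}|(G_-)_z|^2$), the computation is insensitive to the choice of sign, which justifies the uniform notation $\mu_{G_\pm}$.

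I do not expect a genuine obstacle: granting \eqref{e:Gauss}, the statement is a one-line substitution followed by an algebraic cancellation. The only place demanding care is the bookkeeping with complex conjugates and the hyperbolic density, specifically the identity $(\overline{G_\pm})_z=\overline{(G_\pm)_{\bar{z}}}$ and the consistent appearance of $e^{\psi\circ G_\pm}$ in both \eqref{e:pullback metric} and the Hopf differential, since an error there would introduce a spurious conformal factor and spoil the exact cancellation to $|\mu_{G_\pm}|^2$.
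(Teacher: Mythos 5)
Your proposal is correct, but it follows a different route from the paper's own proof. You take the Gauss equation \eqref{e:Gauss} as the starting point, rewrite it as $K_\phi=-1+e^{-2\phi}|\Phi(G_\pm)|^2$, and then verify the purely algebraic identity $e^{-2\phi}|\Phi(G_\pm)|^2=|\mu_{G_\pm}|^2$ by substituting the explicit expressions \eqref{e:pullback metric} for $e^\phi$ and the formula for the Hopf differential; the only delicate points, which you handle correctly, are $(\overline{G_\pm})_z=\overline{(G_\pm)_{\bar z}}$ and the cancellation of the factor $e^{2\psi\circ G_\pm}$. The paper instead computes $\phi_{z\bar z}$ directly from $\phi=\psi\circ G+\log|G_z|^2$, using the harmonic map equation \eqref{e:harmonic map} to kill the mixed term and the Liouville equation $\psi_{w\bar w}=\tfrac12 e^\psi$ to arrive at $\phi_{z\bar z}=\tfrac12 e^{\psi\circ G}\bigl(|G_z|^2-|G_{\bar z}|^2\bigr)$, from which the curvature formula follows by dividing by $e^\phi$. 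The paper's computation is self-contained and in effect re-derives \eqref{e:Gauss} from the harmonic map equation (consistent with the remark preceding the proposition), whereas your argument is shorter but logically depends on \eqref{e:Gauss}, which the paper obtains from the extrinsic Gauss equation of the embedding $\Sigma\subset\mathbb{A}\mathrm{d}\mathbb{S}^{2,1}$. Since \eqref{e:Gauss} is stated before the proposition and is available, your derivation is a legitimate and slightly more economical proof of the same identity.
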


\begin{proof}
For simplicity,  let $G=G_\pm$ during the proof. By the definition of $e^{\phi}$ and \eqref{e:harmonic map}, 
we first obtain
\[
\phi_z=(\psi_w\circ G) G_z +\frac{G_{zz}}{G_z},
\]
and
\[
\phi_{z\bar{z}}=(\psi_{w\bar{w}}\circ G) \big(|G_z|^2-|G_{\bar{z}}|^2\big).
\]
Using the Liouville equation for $e^\psi$, that is, $\psi_{w\bar{w}}=1/2 e^{\psi}$, we get
\[
\phi_{z\bar{z}}=\frac12 e^{\psi\circ G}\big(|G_z|^2-|G_{\bar{z}}|^2\big).
\]
Thus, the Gaussian curvature follows as
\[
K_{\phi}=-2\phi_{z\bar{z}}\,e^{-\phi}=-e^{\psi\circ G}\big(|G_z|^2-|G_{\bar{z}}|^2\big)\cdot e^{-\psi\circ G}|G_z|^{-2} = -1+|\mu_G|^2.
\]
\end{proof}

By the Gauss equation for a maximal conformal embedding  $\sigma: \mathbb{D}_z\to \mathbb{A}\mathrm{d}\mathbb{S}^{2,1}$,
the Gaussian curvature is given by
$$K_\phi=-1 +\kappa^2
$$ 
where $\pm \kappa$ are the principal curvatures of the maximal disc
$\Sigma\subset  \mathbb{A}\mathrm{d}\mathbb{S}^{2,1}$.
Consequently, we obtain 
\begin{equation}\label{e:mu-kappa}
|\mu_{G_\pm}|^2= \kappa^2.
\end{equation}

The pullback metrics in \eqref{e:pullback-metric-pm} by $G_\pm$ are two different hyperbolic metrics, which
induce new conformal structures on $\mathbb{D}_z$, denoted by $\mathbb{D}_{z_{\pm}}$, respectively. 
The identity map between $\mathbb{D}_z$ and $\mathbb{D}_{z_\pm}$ equipped with the hyperbolic metric
can then be interpreted as a harmonic map, with its Hopf differential given by
$\pm \Phi=\Phi(G_\pm)$, respectively. We denote these maps by
$$
F_\pm:\mathbb{D}_z\to \mathbb{D}_{z_\pm}.
$$
Furthermore, by the construction, we have the following:

\begin{proposition}\label{p:G-F-same}
The metric density $e^{\phi(z)}$ on $\mathbb{D}_z$ satisfies the relation
\begin{equation}\label{e:G-F}
e^\phi=e^{\psi\circ G_\pm}|(G_\pm)_z|^2= e^{\psi\circ F_\pm}|(F_\pm)_z|^2
\end{equation}
where $e^\psi$ represents the hyperbolic metric density on $\mathbb{D}_w$, $\mathbb{D}_{z_\pm}$, respectively. Moreover, the Beltrami differentials remain invariant, that is,
\begin{equation}\label{e:quad-Beltrami}
\mu_{G_\pm}= \pm \overline{\Phi} e^{-\phi}=\mu_{F_\pm}.
\end{equation}
\end{proposition}

By Proposition \ref{p:G-F-same}, the pair $(F_+,F_-)$ shares fundamental properties with the Gauss map $(G_+,G_-)$. Thus, we refer to $(F_+,F_-)$ the \emph{induced Gauss map}.  

Combining the above constructions for both cases of $F_\pm$, we obtain the following commutative diagram
\begin{equation}\label{e:CD1} 
\begin{tikzcd}
\large
\mathbb{D}_{z_+}   &    \ \mathbb{D}_z  \arrow[l, swap,  "F_+"]  \arrow[r,  "F_-"]  & \mathbb{D}_{z_-}  \\
& \mathbb{D}_w \arrow [ul, "z_+"] \arrow [u, "z"] \arrow [ur, swap, "z_-"]
\end{tikzcd} 
\end{equation}
where $z,z_+,z_-$ denote the quasi-conformal maps from $\mathbb{D}_w$ to $\mathbb{D}_z$, $\mathbb{D}_{z_+}$, $\mathbb{D}_{z_-}$ respectively. Furthermore, we assume that all maps $z,z_\pm$ and $F_\pm$ are \emph{normalized}, meaning
they preserve the points $1,-1,-\sqrt{-1}$.

By Theorem 4.1 of \cite{AAW00}, Theorem 1.10 of \cite{BS10}, and the previous construction,  we obtain
the following proposition:

\begin{proposition}\label{p:equiv-rel} For a fixed conformal structure on $\mathbb{D}_z$, there exist  one-to-one correspondences between the following:
\vspace{7pt}
\begin{enumerate} 
\setlength{\itemsep}{7pt}
\item{A maximal conformal embedding $\sigma:\mathbb{D}_z \to \mathbb{A}\mathrm{d}\mathbb{S}^{2,1}$
with a complete induced metric and Gaussian curvature that is negative and bounded away from zero.}
\item{An orientation-preserving minimal Lagrangian diffeomorphism $G: \mathbb{D}_w \to \mathbb{D}_w$ where $\mathbb{D}_w$ is equipped with
the hyperbolic metric.}
\item{An orientation-preserving minimal Lagrangian diffeomorphism $F: \mathbb{D}_{z_-} \to \mathbb{D}_{z_+}$ where $\mathbb{D}_{z_\pm}$ are equipped with
the hyperbolic metric.}
\item{A quasisymmetric homeomorphism $h:\mathbb{S}^1\to \mathbb{S}^1$.}
\end{enumerate}
\end{proposition}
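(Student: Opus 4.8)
The plan is to route all four correspondences through item (1), treating the equivalence (1) $\Leftrightarrow$ (4) as the analytic core and deriving (2) and (3) from the Gauss map and the induced Gauss map respectively. Concretely, from a maximal disc $\Sigma$ as in (1) I would first extract the two harmonic Gauss components $G_\pm \colon \Sigma \to (\mathbb{D},w)$ together with the induced Gauss components $F_\pm$ furnished by Proposition \ref{p:G-F-same}, then form the self-maps $G := G_- \circ G_+^{-1}$ and $F := F_- \circ F_+^{-1}$ and verify that they are the minimal (Lagrangian) diffeomorphisms of items (2) and (3); finally I would match $\Sigma$ with its boundary quasisymmetric homeomorphism $h$ using the existence and uniqueness results of \cite{BS10}.

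For the passage from (1) to (2) and (3), the curvature hypothesis is exactly what upgrades harmonic maps to diffeomorphisms. By \eqref{e:Gaussian-curv} and \eqref{e:mu-kappa} the induced curvature $K_\phi = -1 + |\mu_{G_\pm}|^2$ is negative precisely when $|\mu_{G_\pm}| < 1$, i.e. when $G_\pm$ (equivalently $F_\pm$, by Proposition \ref{p:G-F-same}) are quasiconformal; combined with harmonicity this forces $G_\pm$ and $F_\pm$ to be diffeomorphisms onto the hyperbolic disc. The compositions $G$ and $F$ are then harmonic diffeomorphisms whose Hopf differentials are, by construction, negatives of one another, and this opposite-Hopf-differential condition is exactly the characterization of minimal diffeomorphisms invoked in Theorem 4.1 of \cite{AAW00}. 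The commutative diagram \eqref{e:CD1} shows that $G$ and $F$ encode the same underlying map relative to the fixed structure $(\mathbb{D},w)$ and to the induced structures $(\mathbb{D},z_\pm)$, so that items (2) and (3) carry equivalent data.

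To handle (3) $\Leftrightarrow$ (4) and close the loop back to (1), I would use that a minimal diffeomorphism between hyperbolic discs is quasiconformal and hence extends continuously to a quasisymmetric homeomorphism $h$ of $\mathbb{S}^1$; conversely, Theorem 4.1 of \cite{BS10} supplies, for each quasisymmetric $h$, a \emph{unique} minimal diffeomorphism with boundary $h$ and equivalently a unique maximal disc, recovering item (1). The remaining task is to check that the forward and backward passages are mutually inverse: starting from $h$, building the maximal disc and returning to its boundary reproduces $h$, and the Gauss-map constructions are well defined independently of the chosen Beltrami representative.

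The main obstacle is not the existence assertions, which are delegated to \cite{AAW00} and \cite{BS10}, but the bookkeeping that makes the four objects literally correspond: tracking the several conformal structures $(\mathbb{D},z)$, $(\mathbb{D},z_\pm)$, $(\mathbb{D},w)$ through the normalizations fixed in \eqref{e:CD1}, confirming that quasisymmetric regularity survives the compositions $G_-\circ G_+^{-1}$ and $F_-\circ F_+^{-1}$ and their boundary extensions, and verifying that the curvature condition in (1) is equivalent to the diffeomorphism property needed in (2) and (3). The one genuinely delicate point is the uniqueness half of (1) $\Leftrightarrow$ (4): ensuring that the maximal disc reconstructed from $h$ is the same disc one began with, for which the uniqueness in Theorem 4.1 of \cite{BS10} is essential.
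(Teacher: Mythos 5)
Your proposal follows essentially the same route as the paper: the paper also obtains $(1)\to(2)$ via the composition of the Gauss-map components (it writes $G=G_+\circ G_-^{-1}$, the inverse of your convention), passes to $(4)$ by boundary restriction, recovers $(1)$ from $(4)$ via the existence and uniqueness of the bounding maximal disc in \cite{BS10}, and handles $(3)$ by the same construction using the induced Gauss map and the diagram \eqref{e:CD1}, with the substantive inputs delegated to Theorem 4.1 of \cite{AAW00} and Theorem 4.1 of \cite{BS10} exactly as you do. Your additional remarks on the role of the curvature hypothesis and on checking that the two passages are mutually inverse only make explicit what the paper leaves implicit.
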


Here are some remarks regarding Proposition 3.4:

\begin{itemize}
    \item \textbf{(1) $\to$ (2):} The orientation-preserving minimal Lagrangian diffeomorphism \( G: \mathbb{D}_w \to \mathbb{D}_w \) is obtained by defining  
    \[
    G = G_+ \circ G_-^{-1}
    \]
    where $(G_+, G_-) : \mathbb{D}_z \to \mathbb{D}_w \times \mathbb{D}_w $ is the Gauss map of the maximal disc $ \sigma:\mathbb{D}_z \to \mathbb{A}\mathrm{d}\mathbb{S}^{2,1}$ (see Theorem 4.1 of \cite{AAW00} for more details).

    \item\textbf{(2) $\to$ (3):} The harmonic maps $F_\pm$ can be obtained
    from $G_\pm$ as previously explained (see Lemma 2.1 of \cite{AAW00}).

    \item \textbf{(3) $\to$ (4):} The quasisymmetric homeomorphism \( h: \mathbb{S}^1 \to \mathbb{S}^1 \) is obtained by restricting \( F \) to the boundary \( \partial \mathbb{D} \), i.e.,  
    \[
    h := F|_{\partial \mathbb{D}}=G|_{\partial\mathbb{D}}.
    \]
Here the identification of $\partial{\mathbb{D}_{z_-}}$ with $\partial{\mathbb{D}}$
is made via the restriction of the coordinate map of $\mathbb{D}_{z_-}$.
    \item \textbf{(4) $\to$ (1):} Given a quasisymmetric homeomorphism \( h: \mathbb{S}^1 \to \mathbb{S}^1 \), the corresponding maximal disc \( \sigma:\mathbb{D}_z \to \mathbb{A}\mathrm{d}\mathbb{S}^{2,1} \) is constructed such that its boundary \( \partial \Sigma =\partial (\sigma(\mathbb{D}_z)) \) is the graph of \( h \)
(see Theorem 1.10 of \cite{BS10} for more details).

\end{itemize}


By Remark 5.12 of \cite{BS18},
for an orientation preserving minimal Lagrangian diffeomorphism $G:\mathbb{D}_z\to \mathbb{D}_z$ and the hyperbolic metric $g_z$ determining the conformal structure of $\mathbb{D}_z$,
there exists a $g_z$-self-adjoint  endomorphism $b\in\mathrm{End}(T\mathbb{D})$ satisfying the following conditions:
\begin{equation}\label{e:b-def}
G^* g_z = g_z(b\cdot, b\cdot), \qquad d^\nabla b=0, \qquad \mathrm{det}\, b=1
\end{equation}
where $\nabla$ denotes the Levi-Civita connection of $g_z$. Now, we obtain

\begin{proposition}\label{p:h-determines-I}
Let $G: \mathbb{D}_z \to \mathbb{D}_z$  be an orientation-preserving diffeomorphism satisfying the condition \eqref{e:b-def}.
Consider the map $\sigma_{G,b}:\mathbb{D}_z\to \mathrm{Isom}(\mathbb{D}_z)$ defined 
such that for  $x\in\mathbb{D}$, $\sigma_{G,b}(z(x))$ is the unique isometry 
$$\gamma\in \mathrm{Isom}(\mathbb{D}_z)\cong \mathrm{PSL}(2,\mathbb{R}) \cong \mathbb{A}\mathrm{d}\mathbb{S}^{2,1} $$  satisfying the following conditions: 
\begin{equation}\label{e:reflection}
\gamma \circ G  (z(x)) = z(x), \qquad d\gamma_{G(z(x))}\circ dG_{z(x)}=-  b_{z(x)}.
\end{equation}
Then, the following equality holds:
\begin{equation}\label{e:1/4-equality}
\sigma_{G,b}^*( g_{\mathbb{A}\mathrm{d}\mathbb{S}^{2,1}}) =\frac14 g_z ((E+b)\cdot, (E+b)\cdot )   
\end{equation}
where $E$ denotes the identity on $\mathrm{End}(T\mathbb{D})$.
\end{proposition}

\begin{proof}
The claim for the case of $\mathbb{D}_w$ is established in Proposition 5.5 of \cite{BS18}. 
Now, for a given diffeomorphism $G:\mathbb{D}_z\to\mathbb{D}_z$,  consider the following
commutative diagram:
\begin{equation}\label{e:CD-D-z} 
\begin{tikzcd}
\large
 & \mathbb{D}_{w}   \arrow[d, swap , "z"]  \arrow[r, "\widetilde{G}"]   & \mathbb{D}_w  \arrow[d,   "z"]  \\
& \mathbb{D}_z \arrow [r, "G"] & \mathbb{D}_z 
\end{tikzcd} 
\end{equation}
where $\widetilde{G}$ and $G$
are diffeomorphisms over $\mathbb{D}_w$ and $\mathbb{D}_z$ respectively, satisfying the given conditions in \eqref{e:b-def}. 
By assumption, the diffeomorphism ${G}:\mathbb{D}_z\to\mathbb{D}_z$ also satisfies the conditions in \eqref{e:reflection}.  In particular,  for  $x\in\mathbb{D}$ ,  the map ${\sigma}_{{G},{b}}$  maps $(z(x))$ to an isometry ${\gamma}\in \mathrm{Isom}(\mathbb{D}_z)\cong \mathrm{PSL}(2,\mathbb{R})$  that satisfies the equalities in \eqref{e:reflection}. 
From \eqref{e:CD-D-z},  note that the identification between $\mathrm{Isom}(\mathbb{D}_z)$ and $\mathrm{Isom}(\mathbb{D}_w)$  is given by 
$$
\gamma \in \mathrm{Isom}(\mathbb{D}_z)  \to \tilde{\gamma}:=z^{-1}\circ {\gamma}\circ  z\in \mathrm{Isom}(\mathbb{D}_w).
$$
Since ${\sigma}_{{G},{b}}(z(x))\in \mathrm{Isom}(\mathbb{D}_z) \cong \mathbb{A}\mathrm{d}\mathbb{S}^{2,1}$
for $x\in\mathbb{D}_w$, it follows  that
$$
\tilde{\gamma}:= \tilde{\sigma}_{\widetilde{G},\tilde{b}}(x) =  z^{-1}\circ {\sigma}_{{G},{b}}(z(x)) \circ z = z^{-1}\circ {\gamma}\circ z.
$$
We define $\tilde{b}\in\mathrm{End}(T\mathbb{D})$ by
\begin{equation}\label{e:new-b}
\tilde{b}= dz^{-1}\circ  {b}\circ dz,
\end{equation}
which satisfies \eqref{e:b-def} and \eqref{e:reflection} for Levi-Civita connection of $g_w$.
Here, the equality $d^{\widetilde{\nabla}}  \tilde{b}=0$ follows from the corresponding equality for ${b}$ and the relation
$$
\widetilde{\nabla} = dz^{-1} \circ {\nabla} \circ dz^{-1},
$$ where ${\nabla}$ is the Levi-Civita connection of $g_z$.

Applying Proposition 5.5 of \cite{BS18} under the above conditions for the special case $\mathbb{D}_w$, 
the following equality holds
\begin{equation}\label{e:1/4-equality-w}
\tilde{\sigma}_{\widetilde{G},\tilde{b}}^*( g_{\mathbb{A}\mathrm{d}\mathbb{S}^{2,1}}) =\frac14 g_w ((E+\tilde{b})\cdot, (E+\tilde{b})\cdot ). 
\end{equation}
Then, finally, we obtain
\begin{multline*}
\sigma_{{G}, {b}}^*( g_{\mathbb{A}\mathrm{d}\mathbb{S}^{2,1}}) = (z^{-1})^* (\tilde{\sigma}_{\widetilde{G},\tilde{b}})^*( g_{\mathbb{A}\mathrm{d}\mathbb{S}^{2,1}}) 
= \frac14 (z^{-1})^* \big(g_w ((E+\tilde{b})\cdot, (E+\tilde{b})\cdot )\big)\\
=\frac14 g_w  ((E+\tilde{b}) dz^{-1}\cdot, (E+\tilde{b}) dz^{-1}\cdot )
 = \frac14 g_w  ( dz^{-1}(E+ dz \,\tilde{b}\, dz^{-1})\cdot, dz^{-1}(E+ dz\, \tilde{b}\, dz^{-1})\cdot )\\
 = \frac14 (z^{-1})^*  g_w  ( (E+ dz \,\tilde{b}\, dz^{-1})\cdot, (E+ dz \,\tilde{b}\, dz^{-1})\cdot )
= \frac14 (g_z ( (E+  {b} )\cdot, (E+  {b})\cdot ).
\end{multline*}
This completes the proof.
\end{proof}

 Hence, by Propositions \ref{p:equiv-rel} and  \ref{p:h-determines-I},
the first fundamental form $I$ over the maximal conformal embedding ${\sigma}:\mathbb{D}_w\to\mathbb{A}\mathrm{d}\mathbb{S}^{2,1}$ is given by
\begin{equation}
I =\frac14 g_w ((E+b)\cdot, (E+b)\cdot ),  
\end{equation}
where $b\in \mathrm{End}(T\mathbb{D})$ satisfies conditions \eqref{e:b-def}
and \eqref{e:reflection} for $g_w$.
\begin{remark}\label{r:zero-b}
We apply Proposition \ref{p:h-determines-I} in the special case where  $b=0$. This condition implies
that $G$ is the identity map on $\mathbb{D}_z$. Hence, the equalities in \eqref{e:reflection} indicates that $\gamma\in\mathrm{PSL}(2,\mathbb{R})$ represents an
involutional rotation by an angle $\pi$ around the point $z(x)$, which we denote by $\mathcal{I}_{z(x)}$.  Therefore, the image of
$\sigma_{G,b}:\mathbb{D}_z\to \mathbb{A}\mathrm{d}\mathbb{S}^{2,1}$ is 
the totally geodesic disc
\begin{equation}
\mathcal{R}_{\pi}:=\{\, \mathcal{I}_{z(x)}: x \in\mathbb{D}\, \} \subset \mathbb{A}\mathrm{d}\mathbb{S}^{2,1},
\end{equation}
which can be identified with $\mathbb{D}_w$.  In this setting, the map $\sigma_{G,b}:\mathbb{D}_z\to \mathcal{R}_{\pi}\cong\mathbb{D}_w$ coincides the map
$w_\mu$ where $\mu$ is the Beltrami differential of $z$.  Recall that $w_\mu:\mathbb{D}_z\to\mathbb{D}_w$ is a conformal map. Additionally, in this case,
we observe that $G_+=G_-$ is the identity map on $\mathbb{D}_w$, leading to the identification $\mathbb{D}_{z_+}
=\mathbb{D}_{z_-}=\mathbb{D}_z$.
\end{remark}


\begin{definition} Let ${\sigma}:\mathbb{D}_z\to \mathbb{A}\mathrm{d}\mathbb{S}^{2,1}$ be a maximal
conformal embedding such that the boundary $\partial\Sigma=\partial (\sigma(\mathbb{D}_z))$ is the graph of a quasisymmetric homeomorphism $h$ representing an element in $T(1)$.
We say that the maximal conformal embedding ${\sigma}:\mathbb{D}_z\to \mathbb{A}\mathrm{d}\mathbb{S}^{2,1}$ is of \emph{ Weil-Petersson class} if 
the corresponding quasisymmetric homeomorphism $h$ represents an element in $T_0(1)$.
\end{definition}

\begin{proposition}\label{t:total-curv} For a maximal conformal embedding of Weil-Petersson class $$\sigma:\mathbb{D}_z \to \mathbb{A}\mathrm{d}\mathbb{S}^{2,1}
$$
with the induced metric $e^{\phi}|dz|^2$ and
the induced Gauss map $(F_+,F_-):\mathbb{D}_z\to\mathbb{D}_{z_+}\times \mathbb{D}_{z_-}$, we have:
\begin{enumerate}
\item The total curvature is finite, that is,
\begin{equation*}
\int_{\mathbb{D}} \kappa^2 e^{\phi}\ d^2z= \int_{\mathbb{D}} |\mu_{G_\pm}|^2 e^\phi\, d^2z = \int_{\mathbb{D}} |\mu_{F_\pm}|^2 e^\phi\, d^2z < \infty.
\end{equation*}
\item The anti-holomorphic energies of $G_\pm$ and $F_\pm$  are finite respectively, that is,
\begin{equation*}
\int_{\mathbb{D}} |\Phi(G_\pm)|^2 e^{-\phi}\, d^2z= \int_{\mathbb{D}} |\Phi(F_\pm)|^2 e^{-\phi}\, d^2z < \infty.
\end{equation*}
\end{enumerate}
\end{proposition}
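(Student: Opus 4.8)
The proposition is a single finiteness statement in disguise, and I would establish that statement through a Schwarz-type comparison of metrics. First I would record that all four integrals in (1) and (2) agree pointwise. By \eqref{e:Gaussian-curv} and \eqref{e:mu-kappa} we have $\kappa^2=|\mu_{G_\pm}|^2$, and from the proof of Proposition \ref{p:G-F-same} we have $\mu_{G_\pm}=\pm\overline{\Phi}\,e^{-\phi}=\mu_{F_\pm}$ together with $\Phi(F_\pm)=\pm\Phi(G_\pm)=:\pm\Phi$. Hence
\[
\kappa^2e^{\phi}=|\mu_{G_\pm}|^2e^{\phi}=|\mu_{F_\pm}|^2e^{\phi}=|\Phi|^2e^{-\phi}=|\Phi(G_\pm)|^2e^{-\phi}=|\Phi(F_\pm)|^2e^{-\phi}
\]
everywhere on $\Sigma$, so every equality asserted inside (1) and (2) is automatic and the proposition reduces to the single estimate $\int_{\Sigma}|\Phi|^2e^{-\phi}\,d^2z<\infty$.

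Next I would rewrite this integral as an anti-holomorphic energy and isolate the Weil--Petersson input. Let $e^{\psi(z)}$ denote the hyperbolic density of $(\mathbb{D},z)$ and set $\mathcal{H}:=e^{\phi-\psi(z)}=|\partial F_\pm|^2$, the holomorphic energy density of the induced Gauss map with respect to the hyperbolic metrics, where the second equality uses \eqref{e:G-F}. Since $e^{-\phi}=e^{-\psi(z)}\mathcal{H}^{-1}$,
\[
\int_{\Sigma}|\Phi|^2e^{-\phi}\,d^2z=\int_{\mathbb{D}}\frac{|\Phi|^2e^{-\psi(z)}}{\mathcal{H}}\,d^2z .
\]
The hypothesis $(\mu,\Phi)\in T^*T_0(1)$ means precisely that $\Phi$ lies in the Weil--Petersson cotangent space, i.e. $\Phi\in A_2$ for $(\mathbb{D},z)$, which by definition is the finiteness $\int_{\mathbb{D}}|\Phi|^2e^{-\psi(z)}\,d^2z<\infty$. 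Thus everything comes down to a uniform lower bound $\mathcal{H}\geq c>0$.

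Finally I would prove $\mathcal{H}\geq 1$, equivalently $\phi\geq\psi(z)$, by comparing the induced metric $e^{\phi}|dz|^2$ with the hyperbolic metric $e^{\psi(z)}|dz|^2$. By the Gauss equation \eqref{e:Gaussian-curv} the induced metric has curvature $K_\phi=-1+\kappa^2\geq-1$, whereas the hyperbolic metric has curvature $-1$; setting $\eta:=\tfrac12(\psi(z)-\phi)$, a direct computation from \eqref{e:Gaussian-curv} and the Liouville equation gives
\[
\Delta_{e^{\phi}}\eta=e^{2\eta}-1+\kappa^2\geq e^{2\eta}-1 ,
\]
where $\Delta_{e^{\phi}}$ is the Laplacian of the induced metric and $\mathcal{H}=e^{-2\eta}$. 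Since the induced metric on the maximal disc is complete (a property of the Bonsante--Schlenker surface, \cite{BS10}) and has curvature bounded below, the generalized (Omori--Yau) maximum principle applied to $\eta$ forces $\sup\eta\leq0$, i.e. $\mathcal{H}\geq1$; combined with the previous paragraph this yields $\int_{\Sigma}|\Phi|^2e^{-\phi}\,d^2z\leq\int_{\mathbb{D}}|\Phi|^2e^{-\psi(z)}\,d^2z<\infty$. The main obstacle is exactly this lower bound: an interior maximum principle only gives $\mathcal{H}\geq1$ at interior minima, and promoting it to a global bound on the non-compact disc is what forces the use of completeness and a maximum principle at infinity. Alternatively one may quote Wan's estimates for harmonic diffeomorphisms of bounded Hopf differential (\cite{W92},\cite{TW94}) applied to the equation for $\mathfrak{w}=\tfrac12\log\mathcal{H}$ in the proof of Proposition \ref{p:G-F-same}, using that $\Phi\in A_2\subset A_\infty$ has finite hyperbolic sup-norm.
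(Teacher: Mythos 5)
Your argument is correct, but it runs the comparison between the induced metric $e^{\phi}|dz|^2$ and the hyperbolic metric $e^{\psi(z)}|dz|^2$ in the opposite direction from the paper, and it feeds on the other half of the Weil--Petersson hypothesis. The paper starts from $\int_{\Sigma}|\mu_{F_\pm}|^2e^{\psi}\,d^2z<\infty$ (extracted via Lemma 3.3 of \cite{TT06}), deduces $|\mu_{F_\pm}|\to0$ at the boundary, hence $K_\phi\le-a^2<0$ by \eqref{e:Gaussian-curv}, and then uses the Ahlfors--Schwarz direction $e^{\phi}<a^{-2}e^{\psi}$ (Lemma 4.9 of \cite{Se16}) to dominate $\int|\mu_{F_\pm}|^2e^{\phi}$ by $a^{-2}\int|\mu_{F_\pm}|^2e^{\psi}$. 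You instead use only the defining condition $\int_{\mathbb{D}}|\Phi|^2e^{-\psi(z)}\,d^2z<\infty$ from \eqref{e:condition-WP} together with the reverse comparison $e^{\phi}\ge e^{\psi(z)}$, i.e.\ $\mathcal{H}=|\partial F_\pm|^2\ge1$. What your route buys is that it bypasses the boundary decay of $\mu_{F_\pm}$ and the appeal to Lemma 3.3 of \cite{TT06} altogether, resting only on the literal definition of the cotangent fibre; what it costs is that $e^{\phi}\ge e^{\psi(z)}$ genuinely requires completeness of the induced metric and a maximum principle at infinity, neither of which is needed for the Ahlfors--Schwarz direction the paper uses. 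One caveat you should tighten: the plain Omori--Yau principle applied to $\eta$ presupposes $\sup\eta<\infty$, i.e.\ an a priori positive lower bound on $\mathcal{H}$, which is uncomfortably close to what you are trying to prove; either invoke the Cheng--Yau/Osserman form of the principle for the differential inequality $\Delta_{e^{\phi}}\eta\ge e^{2\eta}-1$ (whose right-hand side satisfies the Keller--Osserman condition), or simply take your own suggested alternative and quote Wan's estimate $\mathfrak{w}=\tfrac12\log|\partial F_\pm|^2\ge0$ from \cite{W92} (applicable since $\Phi\in A_2\subset A_\infty$ has finite hyperbolic sup-norm), which is the cleanest way to close the argument.
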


\begin{proof}
Note that the quasisymmetric homeomorphism $h:\mathbb{S}^1\to \mathbb{S}^1$ is given by the restriction of $F:=F_+\circ F_-^{-1}$ to
the boundary $\partial\mathbb{D}$.
Then, by the condition that $\sigma:\mathbb{D}_z \to \mathbb{A}\mathrm{d}\mathbb{S}^{2,1}$ is of Weil-Petersson class and using Theorem B of \cite{DMSST26}, we have the following:
\begin{equation}
\int_{\mathbb{D}} |\mu_{F}|^2 e^{\psi}\, d^2z_- < \infty.
\end{equation}
Now,  form \eqref{e:composition}, the Beltrami differential $\mu_F$ can be expressed as:
\begin{equation}
\mu_F \circ F_- \frac{ \overline{(F_-)_{z}}}{ (F_-)_{z}} = \frac {\mu_+- \mu_-}{1- \mu_+ \overline{\mu}_-} = \frac{2\mu_+}{1+|\mu_+|^2},
\end{equation}
where $\mu_\pm=\mu_{F_\pm}$. This leads to the expression:
\begin{equation}\label{e:mu12}
\left| \mu_F\circ F_-  \right|= \frac{2|\mu_+|}{1+|\mu_+|^2}=  \frac{2|\mu_-|}{1+|\mu_-|^2}.
\end{equation}
Furthermore, we have
\begin{multline}\label{e:est1}
\int_{\mathbb{D}} |\mu_F|^2 e^{\psi} \, d^2z_-
=\, \int_{\mathbb{D}} \left( |\mu_F|^2 \circ F_-\right)\, e^{\psi \circ F_-} \, \left(  \left| (F_-)_z \right|^2 - \left|(F_-)_{\bar{z}}\right|^2 \right) \, d^2z\\
=\, \int_{\mathbb{D}} \left(|\mu_F|^2 \circ F_-\right)\, e^{\psi \circ F_-} \,   \left| (F_-)_z\right|^2 \left(1 - \left|\mu_-\right|^2 \right) \, d^2z\\
=\, \int_{\mathbb{D}} \frac{4|\mu_-|^2}{(1+|\mu_-|^2)^2} e^{\phi} \left( 1 - \left|\mu_- \right|^2 \right) \, d^2z.
\end{multline}
Since  $[\mu_F]\in T_0(1)$,  we have $|\mu_F(z)|\to 0$ as $|z|\to 1$, implying that $|\mu_-(z)|\to 0$ as $z$ approaches to $\partial\mathbb{D}$ by \eqref{e:mu12}.
Hence, there exists $C>0$ such that
\begin{equation}\label{e:lower-bound}
C< \frac{1 -|\mu_-|^2}{(1+|\mu_-|^2)^2} < 1 \qquad \text{over} \quad \mathbb{D}.
\end{equation}
Hence, from \eqref{e:est1} and \eqref{e:lower-bound}, we conclude that
\begin{multline}\label{e:mu-Phi-F}
\int_{\mathbb{D}} |\mu_F|^2 e^{\psi}\, d^2z_- <\infty \qquad \text{ if and only if} \qquad
\int_{\mathbb{D}} {|\mu_\pm|^2} e^{\psi} \, d^2z \leq \int_{\mathbb{D}} {|\mu_\pm|^2} e^{\phi} \, d^2z < \infty.
\end{multline}
This means that $F_\pm|_{\partial{\mathbb{D}}}$ represents points in $T_0(1)$, which
also implies that $|\mu_{F_\pm}|(z) \to 0$ as $|z|\to 1$. Now, recall that there exists a nonzero constant $a$ such that the curvature satisfies
$$-1< K_\phi < -a^2$$ by \eqref{e:Gaussian-curv} and $|\mu_\pm|\to 0$. By Lemma 4.9 of \cite{Se16}, it follows that
$$ 
e^\phi < a^{-2} e^\psi.
$$ Using these facts and the identity $\kappa^2=|\mu_{F_\pm}|^2$, we obtain
\begin{equation}
\int_{\mathbb{D}} \kappa^2 e^{\phi}\ d^2z= \int_{\mathbb{D}} |\mu_{F_\pm}|^2 e^\phi\, d^2z <a^{-2} \int_{\mathbb{D}} |\mu_{F_\pm}|^2 e^\psi\, d^2z < \infty.
\end{equation}
This completes the proof of item (1). The proof of item (2) follows easily by noting that
$$
|\Phi(F_{\pm})|^2\, e^{-\phi} = |\mu_{F_\pm}|^2 \, e^{\phi}.
$$
\end{proof}

\subsection{Mess map}


For a given pair $(\mu, \Phi)$ representing a point in the holomorphic cotangent bundle $T^*T(1)$,  we first take a conformal structure over the unit disc
determined by $\mu$, which we denote by $\mathbb{D}_z$.
For a given the holomorphic quadratic differential $\Phi$ over $\mathbb{D}_z$ and $\mathbb{D}_w$,
by Theorem 3.2 of \cite{TW94}, there exist harmonic maps 
$$G_\pm:\mathbb{D}_z\to \mathbb{D}_w$$ 
with the Hopf differential $\pm\Phi$ respectively.
Then, the pullback by $G_\pm$ of the hyperbolic metric on $\mathbb{D}_w$ are two different hyperbolic metrics 
on $\mathbb{D}_z$, denoted by $I_\pm$,  which determines two conformal structures $\mathbb{D}_{z_\pm}$ respectively.
 As explained previously, there exist harmonic maps 
\begin{equation*}
F_\pm: \mathbb{D}_z \to \mathbb{D}_{z_\pm}
\end{equation*}
with the Hopf differentials $\pm\Phi$ respectively.
Finally, we introduce the Mess map defined as
\begin{equation}\label{e:def-Mess}
\mathrm{Mess}: T^*T(1) \to T(1)\times T(1),
\end{equation}
which maps a pair $(\mu,\Phi)$ to the pair $(I_+,I_-)$ representing a point in $T(1)\times T(1)$. 

\begin{proposition}\label{p:bijective-Mess}
The map $\mathrm{Mess}: T^*T(1) \to T(1)\times T(1)$ is a bijective map.
\end{proposition}

\begin{proof}
To show the injectivity of $\mathrm{Mess}:T^*T(1) \to T(1)\times T(1)$, we consider the following equalities
\begin{equation}\label{e:composition-mu}
\mu_{z_\pm} = \frac{\mu_z \pm z^*(\mu_+)}{1\pm \bar{\mu}_z\, z^*(\mu_+)}
\end{equation}
where $\mu_+=\mu_{F_+}$. If the $\mathrm{Mess}$ map is not injective, there are two pairs $(a,b)$ and $(a',b')$ for $(\mu_z, z^*(\mu_F))$ that satisfy the same equations 
for a given $\mu_{z_\pm}$, leading to the equalities:
\begin{equation}\label{e:injective}
a\pm b +\bar{a'}bb' \pm a\bar{a'}b' = a' \pm b' +\bar{a}bb' \pm \bar{a}a'b.
\end{equation}
By rearranging and combing these expressions, we obtain
\begin{equation}
a-a'= (\bar{a}-\bar{a'}) bb'.
\end{equation}
From this, if $a\neq a'$, we have $|bb'|=1$. However, this contradicts the fact that $|bb'|<1$. Hence, we conclude that $a=a'$, which further implies that $b=b'$ by the previous equation  \eqref{e:injective}.
Thus, there exists a unique pair $(\mu_z, \mu_{F_+})$ satisfying \eqref{e:composition-mu} for a given pair
$(\mu_{z_+},\mu_{z_-})$.  Since the pair $(\mu_z,\mu_{F_+})$ uniquely determines
the pair $(\mu_z, \Phi)$  by \eqref{e:quad-Beltrami}, this proves the injectivity of the map $\mathrm{Mess}:T^*T(1) \to T(1)\times T(1)$.

To prove the surjectivity of the map $\mathrm{Mess}:T^*T(1) \to T(1)\times T(1)$,  let $(I_+,I_-)$ be a pair representing a point in $T(1)\times T(1)$
be a given pair.  We first consider two conformal structures over the unit disc determined by $I_+,I_-$,
denote by $\mathbb{D}_{z_+}$ and $\mathbb{D}_{z_-}$,  respectively. This implies that there exist two quasiconformal maps $z_{\pm}:\mathbb{D}_w\to\mathbb{D}_{z_\pm}$.
Next, we consider the conformal structure on the unit disc $\mathbb{D}$ determined by $I_++I_-$, which we denote by $\mathbb{D}_z$.
By applying Proposition \ref{p:equiv-rel}, for the fixed $\mathbb{D}_z$ and the quasisymmetric homeomorphism
$(z_+\circ z_-^{-1})|_{\partial\mathbb{D}}$, there exists a maximal conformal embedding $\sigma:\mathbb{D}_z\to  \mathbb{A}\mathrm{d}\mathbb{S}^{2,1}$ with its induced Gauss map $(F_+,F_-):\mathbb{D}_z\to \mathbb{D}_{z_+}\times \mathbb{D}_{z_-}$. By the construction, the pair $(\mu_z, \Phi(F_+))$ is mapped  to
the given pair $(I_+,I_-)$ by the map $\mathrm{Mess}$. This completes the proof of the surjectivity
of $\mathrm{Mess}$.

\end{proof}



When restricting the Mess map to the holomorphic cotangent bundle of the Weil-Petersson Teichm\"uller space $T_0(1)$, we have

\begin{theorem}\label{t:Mess-diff}
The following restriction of  the $\mathrm{Mess}$ map is a diffeomorphism,
\begin{equation}
\mathrm{Mess}:T^*T_0(1) \to T_0(1)\times T_0(1).
\end{equation}
\end{theorem}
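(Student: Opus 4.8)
The plan is to establish the diffeomorphism by combining the bijectivity of $\mathrm{Mess}$ on the full cotangent bundle $T^*T(1)$ (already cited from Section 5 of \cite{KS2013}) with a careful analysis of what the Weil-Petersson condition imposes on both the source and target. The key observation is that I already have, by the Definition preceding Proposition \ref{t:total-curv}, that $(\mu,\Phi)\in T^*T_0(1)$ precisely means $\mu$ represents a point in $T_0(1)$ and $\Phi\in A_2(\mathbb{D})$ (the holomorphic quadratic differential lies in the $L^2$ space cutting out the Weil-Petersson cotangent fibre). The first step is therefore to show that $\mathrm{Mess}$ maps $T^*T_0(1)$ \emph{into} $T_0(1)\times T_0(1)$, i.e.\ that the two hyperbolic ends $I_\pm$ determine quasisymmetric homeomorphisms $z_\pm|_{\mathbb{S}^1}$ representing Weil-Petersson class points. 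For this I would invoke Proposition \ref{t:total-curv}: the finiteness of $\int_\Sigma |\mu_{F_\pm}|^2 e^\phi\, d^2z$ together with the uniform bound $e^\phi < a^{-2}e^\psi$ gives finiteness of the Dirichlet-type energy $\int_{\mathbb{D}} |\mu_{F_\pm}|^2 e^{\psi_\pm}\, d^2z$ associated to the induced conformal structures $(\mathbb{D},z_\pm)$, which by Lemma 3.3 of \cite{TT06} is exactly the characterization of membership in $T_0(1)$.

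Next I would show the reverse inclusion, namely that $\mathrm{Mess}^{-1}$ carries $T_0(1)\times T_0(1)$ back into $T^*T_0(1)$. Given $(I_+,I_-)$ with both factors Weil-Petersson class, the associated minimal Lagrangian map $F:(\mathbb{D},z_+)\to(\mathbb{D},z_-)$ and its Hopf differential $\Phi$ are recovered by \eqref{e:relation-I_pm}; I must verify that the resulting $\mu$ lies in $T_0(1)$ and that $\Phi\in A_2$. The equality $|\mu_{F_\pm}|^2 = \kappa^2$ from \eqref{e:mu-kappa} and the identity $|\Phi(F_\pm)|^2 e^{-\phi} = |\mu_{F_\pm}|^2 e^\phi$ from the proof of Proposition \ref{t:total-curv} let me translate the $L^2$ finiteness of the Beltrami coefficient of $z_\pm$ into the $L^2$ finiteness of $\Phi$ against $e^{-\phi}$, and then (using $e^\phi<a^{-2}e^\psi$ again, now with the principal-curvature bound inherited from the Weil-Petersson ends) into the $A_2(\mathbb{D})$ condition. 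This establishes that $\mathrm{Mess}$ restricts to a \emph{bijection} between the two spaces.

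The final step is smoothness of $\mathrm{Mess}$ and its inverse with respect to the Hilbert manifold structures on $T_0(1)$ (hence on $T^*T_0(1)$ and on the product). Since both spaces are now known to be the same set-theoretic restriction of a bijection, it suffices to check that the constructions entering $\mathrm{Mess}$ — solving the Gauss equation \eqref{e:Gauss} for $\phi$ given $(\mu,\Phi)$, extracting the harmonic maps $F_\pm$ via Theorem 3.2 of \cite{TW94}, and reading off $I_\pm$ through \eqref{e:relation-I_pm} — depend smoothly on the data in the $A_2$-topology, and likewise for the inverse. The dependence of solutions of the Gauss equation on the parameters, together with the real-analyticity of the minimal Lagrangian/harmonic map correspondence, yields smoothness by the implicit function theorem on the relevant Hilbert spaces; uniqueness (Corollary 8 of \cite{W92}) guarantees that the inverse branch is well defined and equally smooth.

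The main obstacle I anticipate is \emph{not} the bijectivity, which is essentially inherited from \cite{KS2013}, but rather the two-sided control of the Weil-Petersson norms: showing that the $L^2$ finiteness condition is genuinely preserved in both directions requires carefully tracking how the comparison $e^\phi\asymp e^\psi$ (valid because of the strict curvature pinching $-1<K_\phi<-a^2$) interacts with the change between the intrinsic conformal structure $(\mathbb{D},z)$ on $\Sigma$ and the two end structures $(\mathbb{D},z_\pm)$. In particular, verifying that the pinching constant $a$ can be chosen uniformly — so that the finiteness estimates of Proposition \ref{t:total-curv} survive when passing to $\mathrm{Mess}^{-1}$ — is the delicate analytic point on which the proof hinges.
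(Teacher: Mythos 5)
Your set-theoretic argument is essentially the paper's: the forward inclusion via the $L^2$ conditions \eqref{e:condition-WP}, Lemma 3.3 of \cite{TT06} and the curvature pinching $-1<K_\phi<-a^2$, and the reverse inclusion via the minimal Lagrangian extension and the identity $|\Phi(F_\pm)|^2e^{-\phi}=|\mu_{F_\pm}|^2e^{\phi}$. The one routing difference there is harmless: you deduce injectivity of the restriction from the global bijectivity on $T^*T(1)$ cited from \cite{KS2013}, whereas the paper reproves it directly from the composition formula for Beltrami coefficients ($a-a'=(\bar a-\bar a')bb'$ forcing $a=a'$ since $|bb'|<1$). Either is acceptable. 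One point you gloss over: membership of $z_\pm|_{\mathbb{S}^1}$ in $T_0(1)$ is a condition on the Beltrami coefficient of the composite $z_\pm=F_\pm\circ z$, not directly on $\mu_{F_\pm}$; the paper bridges this with the change-of-variables computation \eqref{e:est1} and the two-sided bound \eqref{e:lower-bound}, which is exactly the ``delicate analytic point'' you flag but do not carry out.

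The genuine gap is in your final step. ``Diffeomorphism'' between Hilbert manifolds requires more than a smooth bijection: you must show the differential of $\mathrm{Mess}$ is a topological isomorphism of the Hilbert tangent spaces at every point (or, equivalently, directly establish smoothness of the inverse, which is the same difficulty). Your proposal asserts this by appeal to ``real-analyticity of the minimal Lagrangian/harmonic map correspondence'' and to uniqueness of solutions (Corollary 8 of \cite{W92}), but uniqueness gives well-definedness of the inverse branch, not its differentiability, and no computation of the differential is offered. This is precisely the content the paper defers to Proposition \ref{p:diffeo}: one needs the explicit variational formulae \eqref{e:def-var-mu} and \eqref{e:def-var-phi} expressing $\delta_k\mu$ and $\delta_k\Phi$ in terms of $F_\pm^*(\nu_{\pm,k})$, and then a decomposition of $T(T_0(1)\times T_0(1))$ into diagonal and anti-diagonal parts, using $|\mu_F|<1$ to rule out a kernel (via $F_+^*(\nu_+)(1-|\mu_F|^4)=0$) and the diffeomorphism property of $F_\pm$ to get surjectivity. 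Without some version of this computation your argument does not establish that the restriction is a diffeomorphism, only that it is a bijection respecting the two inclusions.
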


\begin{proof}
Recall that a point in $T^*T_0(1)$ is represented by a pair $(\mu, \Phi)$,  where $\mu$ is a Beltrami differential on $\mathbb{D}_w$ and $\Phi$ is
a holomorphic quadratic differential on $\mathbb{D}_z$ satisfying the conditions
\begin{equation}\label{e:condition-WP}
\int_{\mathbb{D}} |\mu|^2 e^{\psi} \, d^2w <\infty, \qquad \int_{\mathbb{D}} |\Phi|^2 e^{-\psi} \, d^2z < \infty.
\end{equation}
Here $\mathbb{D}_w$ denotes the origin in $T_0(1)$ and $\mathbb{D}_z$ represents the conformal structure determined by $\mu$.
The first inequality follows from Lemma 3.3 of \cite{TT06}, and the second inequality follows from the definition of $T^*T_0(1)$.
These conditions imply that the restrictions
of maps $F_+,F_-$ and $z$  to the boundary are quasisymmetric homeomorphisms of $\mathbb{S}^1$ representing points in $T_0(1)$ respectively.
Since the composition of two quasisymmetric homeomorphisms representing elements in $T_0(1)$ is
itself  a quasisymmetric homeomorphism in $T_0(1)$,
the restriction of $z_\pm=F_\pm\circ z$ to the boundary $\mathbb{D}$ is a quasisymmetric homeomorphism in $T_0(1)$. Hence, it follows that $\mathrm{Mess}$ maps  $T^*T_0(1)$ 
into $T_0(1)\times T_0(1)$.  

For given quasisymmetric homeomorphisms $f_\pm:\mathbb{S}^1\to\mathbb{S}^1$ representing two points in $T_0(1)$,
let us denote by $z_\pm:\mathbb{D}_w\to \mathbb{D}_{z_\pm}$ the corresponding conformal structures equipped with
the hyperbolic structures $I_\pm$ respectively.
By Proposition \ref{p:bijective-Mess},
there exists a pair $(\mu_z,\Phi)$ representing a point in $T^*T(1)$ which is mapped to $(I_+,I_-)$ by the $\mathrm{Mess}$ map. 
We need to show that $(\mu_z,\Phi)$  represents a point in $T^*T_0(1)$. To prove this, let us consider the quasisymmetric homeomorphism $f_+\circ f_-^{-1}:\mathbb{S}^1\to\mathbb{S}^{1}$. Then, by Proposition \ref{p:equiv-rel}, 
there exists a minimal Lagrangian diffeomorphism extension $F:\mathbb{D}_{z_-}\to \mathbb{D}_{z_+}$ of $f_+\circ f_-^{-1}$. Moreover,  
there exist harmonic maps $F_\pm: \mathbb{D}_z \to \mathbb{D}_{z_\pm}$ having $\pm\Phi$ as the Hopf differentials, respectively. By the construction, 
we have $F=F_+\circ F_-^{-1}$ , and it follows that
\begin{equation}
\int_{\mathbb{D}} |\mu_F|^2 e^{\psi} \, d^2z_-  < \infty
\end{equation}
by Lemma 3.3 in \cite{TT06}. From this, as in the proof of Proposition \ref{t:total-curv}
we have the equivalence
\begin{multline}\label{e:mu-Phi-F}
\int_{\mathbb{D}} |\mu_F|^2 e^{\psi}\, d^2z_- <\infty \qquad \text{ if and only if} \qquad
\int_{\mathbb{D}} {|\mu_\pm|^2} e^{\psi} \, d^2z \leq \int_{\mathbb{D}} {|\mu_\pm|^2} e^{\phi} \, d^2z < \infty.
\end{multline}
This means that $F_\pm|_{\partial{\mathbb{D}}}$ represents points in $T_0(1)$.
For the given map $f_\pm=z_\pm|_{\partial{\mathbb{D}}}$,  which represent points in $T_0(1)$ respectively, we have that $z|_{\partial{\mathbb{D}}}=( F_\pm^{-1}\circ z_\pm ) |_{\partial{\mathbb{D}}}$ represents a point in $T_0(1)$. 
From  the condition $\int_{\mathbb{D}} {|\mu_\pm|^2} e^{\phi} \, d^2z < \infty$ in \eqref{e:mu-Phi-F}, it follows that $\Phi\in T^*T_0(1)$.
Hence, the pair $(\mu_z, \Phi)$ represents a point in $T^*T_0(1)$. This concludes that the map $\mathrm{Mess}:T^*T_0(1) \to T_0(1)\times T_0(1)$ is surjective.

The injectivity of $\mathrm{Mess}:T^*T_0(1) \to T_0(1)\times T_0(1)$ can be proved similarly to the proof of Proposition \ref{p:bijective-Mess}.

To complete the proof of the claim, it suffices to show that the differential of 
 the map $\mathrm{Mess}:T^*T_0(1) \to T_0(1)\times T_0(1)$ 
is an isomorphism. Then, by the inverse function theorem, the proof will be completed. This will be established in Proposition \ref{p:diffeo}.
\end{proof}

As in the proofs of Proposition \ref{t:total-curv} and Theorem \ref{t:Mess-diff}, one can similarly establish the following result.

\begin{theorem}\label{t:energy-finite}
For the induced harmonic maps $F_\pm$ associated to the pairs $(\mu,\Phi)$ representing points in $T^*T_0(1)$, the anti-holomorphic energy of $F_\pm$ defines a finite valued functional $E$ over $T^*T_0(1)$.
\end{theorem}

Now we consider the following commutative diagram:

\begin{equation}\label{e:CD-WP-fibration} 
\begin{tikzcd} 
\large
   T^*T_0(1)  \arrow{rr}{\mathrm{Mess}}  \arrow[swap]{dr}{p_1}\ \  &    &   \ \  T_0(1)\times T_0(1)  \arrow{dl}{p_2} \\
& T_0(1) &
\end{tikzcd} 
\end{equation}
Here the projection $p_1$ 
is defined as follows: for a given pair $(\mu, \Phi)$  representing a point in $T^*T_0(1)$, there exist harmonic maps $G_\pm:\mathbb{D}_z\to \mathbb{D}_w$  whose Hopf differentials are $\pm\Phi$ respectively, by Theorem 3.2 of \cite{TW94}.
Then, $p_1$ maps the pair $(\mu, \Phi)$ to the quasisymmetric homeomorphism $h:= (G_+\circ G_-^{-1})|_{\partial\mathbb{D}}$, which represents an element in $T_0(1)$ by the proof of Theorem \ref{t:Mess-diff}. 
The second projection $p_2$ is defined by
\begin{equation}\label{e:mu-composition}
p_2(\mu_+,\mu_-) = [\mu_+\star \mu_-^{-1}]
\end{equation}
where the operation $\star$ is defined in terms of
\eqref{e:composition}. Then, we observe that the above diagram is commutative, that is,
$p_1= \mathrm{Mess}\circ p_2$. This follows from the constructions of these maps and
\[
G_+\circ G_-^{-1}|_{\partial\mathbb{D}}= F_+\circ F_-^{-1}|_{\partial\mathbb{D}} = z_+\circ z_-^{-1} |_{\partial\mathbb{D}}.
\]

\begin{remark}\label{r:totally-geodesic}
The Weil-Petersson universal Teichm\"uller space $T_0(1)$ parametrizes the
space of the maximal discs of Weil-Petersson class by Theorem 
1.10 of \cite{BS10}.  By the definition of maximal conformal embedding
$\sigma: \mathbb{D}_z \to \mathbb{A}\mathrm{d}\mathbb{S}^{2,1}$, the cotangent bundle
$T^*T_0(1)$ parametrizes the space of maximal conformal embeddings of Weil-Petersson class. Using \eqref{e:CD-D-z} and \eqref{e:mu-composition}, we observe that
$T_0(1)$-copy of maximal conformal embeddings corresponds to the same maximal disc. In particular, by Remark \ref{r:zero-b}, the inverse images by $p_i$ for $i=1,2$ of a totally geodesic disc correspond to the image of the zero section of $T^*T_0(1)$ and the diagonal set in $T_0(1)\times T_0(1)$, respectively.
\end{remark}

\begin{remark}\label{r:parametrization}
Recall that $T_0(1)^\pm$ is the inverse image under $\mathrm{Mess}$ of the subsets $T_0(1)\times \{0\}$ and $\{0\} \times T_0(1)$ in $T_0(1)\times T_0(1)$. 
Using \eqref{e:CD-D-z} and \eqref{e:mu-composition}, we observe that these spaces $T_0(1)^\pm$ also parametrize the space of maximal discs of Weil-Petersson class in $\mathbb{A}\mathrm{d}\mathbb{S}^{2,1}$, respectively. 
\end{remark}

\section{Symplectic structure on $T^*T_0(1)$}\label{s:variation}

In this section, we first derive some variational formulas for several quantities associated with a maximal disc in $\mathbb{A}\mathrm{d}\mathbb{S}^{2,1}$, considered along the deformations of a maximal conformal embedding. Using these formulas, we establish a relationship between  the canonical form of $T^*_0T(1)$ and the difference of  pullbacks of Weil-Petersson symplectic forms from each factor of $T_0(1)\times T_0(1)$ via the $\mathrm{Mess}$ map.

\subsection{Variational formulas}
For a maximal conformal  embedding $\sigma:\mathbb{D}_z \to \mathbb{A}\mathrm{d}\mathbb{S}^{2,1}$, we denote by 
\begin{equation}\label{e:maximal-conformal-embedding}
\sigma^\epsilon:\mathbb{D}_{z^\epsilon}\to \mathbb{A}\mathrm{d}\mathbb{S}^{2,1} 
\end{equation}
its deformation family for
a small real parameter $\epsilon$. In general, such a deformation consists of two parts: one is a deformation of conformal structures on the domain of $\sigma^\epsilon$,
and the other is a deformation of  the maximal conformal embedding into  $\mathbb{A}\mathrm{d}\mathbb{S}^{2,1}$. 
For the deformation of the conformal structure of the domain of $\sigma^\epsilon$, we denote it by
$\mathbb{D}_{z^\epsilon}$. The other part of the deformation of  the maximal conformal embeddings is determined by  deformation of
the induced Gauss map $(F_+,F_-):\mathbb{D}_z \to \mathbb{D}_{z_+}\times \mathbb{D}_{z_-}$.

To address such a general situation involving the deformation of a maximal conformal embedding $\sigma: \mathbb{D}_z \to \mathbb{A}\mathrm{d}\mathbb{S}^{2,1}$, we consider
 the following diagram:
\begin{equation}\label{e:CD-1}
\begin{tikzcd}
\Large
\mathbb{D}_z  \arrow[r, "F"]  \arrow[d, swap, "f^\epsilon"]  \arrow[dr,  "H^\epsilon"]  &  \mathbb{D}_u \arrow[d, "h^\epsilon"] \\
\mathbb{D}_{z^\epsilon} \arrow[r, swap, "{F}^{\epsilon}"]   &  \mathbb{D}_{u^\epsilon}
\end{tikzcd} 
\end{equation}
Here $f^\epsilon$and $h^\epsilon$ denote quasiconformal maps with the corresponding Beltrami differential $\nu^{\epsilon}_f$ and $\nu^{\epsilon}_h$,
respectively.   We may assume that $\nu_f^{\epsilon}$ and $\nu_h^{\epsilon}$ depend  analytically on the real parameter $\epsilon$ such that $\nu^0_f=0$ and $\nu^0_h=0$.  Therefore, the quasi-conformal maps  $f^{\epsilon}$, $h^{\epsilon}$ satisfy the following Beltrami differential equations:
\begin{equation}\label{e:Bel-eq}
 f^\epsilon_{\bar{z}} =  \nu^{\epsilon}_f f^{\epsilon}_{z}, \qquad h^{\epsilon}_{\bar{u}}= \nu^{\epsilon}_h h^{\epsilon}_{u}.
\end{equation}
It has been known that $f^\epsilon$ and $h^\epsilon$ depend analytically on $\epsilon$ for every fixed $z$. 
Taking derivative at $\epsilon=0$, we obtain
\begin{equation}\label{e:der-Bel}
\dot{f}_{\bar{z}}= \nu_f, \qquad \dot{h}_{\bar{u}}=\nu_h
\end{equation}
where $\nu_f$ and $\nu_h$ denote the harmonic Beltrami differential given by the derivative of $\nu_f^{\epsilon}$
and $\nu^\epsilon_h$ at $\epsilon=0$ respectively.

\begin{lemma}\label{l:key}
For $\nu_h=\dot{h}_{\bar{z}}$ satisfying $h^\epsilon \circ F= H^{\epsilon}$, we have
\begin{equation}\label{e:key}
\nu_h=R(\dot{\mu}_H, \mu_F) = \Big(\frac{\dot{\mu}_H}{1-|\mu_F|^2} \frac{F_z}{\overline{F}_{\bar{z}}}\Big)\circ {F}^{-1}.
\end{equation}
Here $\dot{\mu}_H=\frac{d}{d\epsilon}\big|_{\epsilon=0} \mu_{H^\epsilon}$.
\end{lemma}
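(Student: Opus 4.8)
The plan is to exploit the commutativity of \eqref{e:CD-1}, specifically the relation $H^\epsilon=h^\epsilon\circ F$, and to compute the Beltrami differential of $H^\epsilon$ by the chain rule. Since $F$ is independent of $\epsilon$ while only $h^\epsilon$ carries the deformation, differentiating the resulting composition formula at $\epsilon=0$ should produce $\dot\mu_H$ as an explicit linear expression in $\nu_h\circ F$, which can then be inverted to give \eqref{e:key}.

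Concretely, first I would write $H^\epsilon(z)=h^\epsilon(F(z))$ and apply the chain rule to $H^\epsilon_z$ and $H^\epsilon_{\bar z}$, using $\overline{F}_z=\overline{F_{\bar z}}$ and $\overline{F}_{\bar z}=\overline{F_z}$. Dividing $H^\epsilon_{\bar z}$ by $H^\epsilon_z$ and factoring out $(h^\epsilon_u\circ F)\,F_z$ yields the composition formula
\begin{equation*}
\mu_{H^\epsilon}=\frac{\mu_F+(\mu_{h^\epsilon}\circ F)\,\overline{F_z}/F_z}{1+(\mu_{h^\epsilon}\circ F)\,\overline{F_{\bar z}}/F_z},
\end{equation*}
where $\mu_F=F_{\bar z}/F_z$. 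The key input from \eqref{e:Bel-eq} is that $\mu_{h^\epsilon}=\epsilon\,\nu_h$ identically, so $h^0=\mathrm{id}$, $\mu_{h^0}=0$, and $\frac{d}{d\epsilon}\big|_0\mu_{h^\epsilon}=\nu_h$ in accordance with \eqref{e:der-Bel}.

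Differentiating the displayed quotient at $\epsilon=0$, where the numerator equals $\mu_F$ and the denominator equals $1$, collapses the quotient rule to $\dot\mu_H=\dot N-\mu_F\,\dot D$, that is,
\begin{equation*}
\dot\mu_H=(\nu_h\circ F)\,\frac{1}{F_z}\Big(\overline{F_z}-\mu_F\,\overline{F_{\bar z}}\Big).
\end{equation*}
Substituting $\mu_F=F_{\bar z}/F_z$ and using $\overline{F_z}=|F_z|^2/F_z$ rewrites the parenthesis as $|F_z|^2(1-|\mu_F|^2)/F_z$, whence $\dot\mu_H=(\nu_h\circ F)\,(\overline{F_z}/F_z)\,(1-|\mu_F|^2)$. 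Solving for $\nu_h\circ F$ and precomposing with $F^{-1}$ gives precisely
\begin{equation*}
\nu_h=\Big(\frac{\dot\mu_H}{1-|\mu_F|^2}\,\frac{F_z}{\overline{F_z}}\Big)\circ F^{-1},
\end{equation*}
which is \eqref{e:key} once one recognizes the right-hand side as $R(\dot\mu_H,\mu_F)$ in the notation of \eqref{e:pullback}, with $F$ playing the role of $w_{\mu_F}$ and $\overline{F}_{\bar z}=\overline{F_z}$.

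The computation is essentially routine; the only place demanding care is the bookkeeping of complex conjugates in the chain rule and the final precomposition by $F^{-1}$, since $F$ here is the genuinely non-conformal induced Gauss map rather than a conformal change of coordinates. Keeping track of which quantities are evaluated at $z$ versus at $u=F(z)$, and confirming that the normalization forces $h^0=\mathrm{id}$ so that the $\epsilon=0$ terms simplify as claimed, is the main thing to verify.
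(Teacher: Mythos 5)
Your proposal is correct and follows essentially the same route as the paper: the paper quotes the composition formula \eqref{e:composition} directly to get $\mu_{H^\epsilon}=\bigl(\mu_F+F^*(\epsilon\nu_h)\bigr)/\bigl(1+\bar{\mu}_F F^*(\epsilon\nu_h)\bigr)$, differentiates at $\epsilon=0$ to obtain $\dot{\mu}_H=(1-|\mu_F|^2)F^*(\nu_h)$, and inverts, which is exactly your computation with the chain-rule derivation of the composition formula written out explicitly. Your bookkeeping of conjugates ($\overline{F}_{\bar z}=\overline{F_z}$, the identification of the denominator with $1+\bar\mu_F F^*(\epsilon\nu_h)$) is accurate, so the only difference is that you supply the elementary steps the paper leaves implicit.
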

\begin{proof}
From \eqref{e:composition}, we derive
the following expression for the Beltrami differential $\mu_{H^\epsilon}$ of $H^\epsilon$:
\begin{equation}\label{e:mu-F}
\mu_{H^\epsilon}=\frac{\mu_F+ F^*( \nu^{\epsilon}_h)}{1+\bar{\mu}_F F^*(\nu^{\epsilon}_h)}.
\end{equation}
Taking the derivative of this expression with respect to $\epsilon$ at $\epsilon=0$, we obtain
$$
\dot{\mu}_H=(1-|\mu_F|^2) F^*(\nu_h).
$$
From this, the equality in\eqref{e:key} follows directly.
\end{proof}

For a family of quasiconformal maps $f^\epsilon$ with $f^\epsilon_{\bar{z}} =  \nu^{\epsilon} f^{\epsilon}_{z}$ and a smooth family of tensors $\omega^\epsilon$ of type $(\ell,m)$, set
\[
(f^\epsilon)^*(w^\epsilon)= \omega^\epsilon\circ f^\epsilon \big((f^\epsilon)_z\big)^{\ell} \big( (\overline{f^\epsilon})_{\bar{z}}\big)^m.
\]
The Lie derivatives of the family $\omega^\epsilon$ along a vector field $\frac{\partial}{\partial \epsilon_\nu}$ is defined by
\[
L_\nu \omega= \frac{\partial}{\partial \epsilon}\Big|_{\epsilon=0} (f^\epsilon)^*(w^\epsilon) .
\]

\begin{proposition}\label{p:var-nu-F}
If the diagram \eqref{e:CD-1} is commutative, that is, $F^\epsilon\circ f^\epsilon = H^\epsilon= h^\epsilon\circ F$, then
\begin{equation}\label{e:var-nu-F}
L_{\nu_f} \mu_{F} =  (1-|\mu_F|^2) F^*(\nu_h) - (\nu_f-\bar{\nu}_f\mu_F^2).
\end{equation}
\end{proposition}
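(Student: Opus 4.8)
The plan is to exploit that the diagonal map $H^\epsilon$ in \eqref{e:CD-1} admits \emph{two} factorizations, $H^\epsilon = F^\epsilon\circ f^\epsilon = h^\epsilon\circ F$, and to compute the derivative $\dot\mu_H$ of its Beltrami coefficient in two different ways. Setting the two expressions equal will isolate $L_{\nu_f}\mu_F$. The factorization $H^\epsilon = h^\epsilon\circ F$ is exactly the content of Lemma \ref{l:key}, whose proof already records that
\[
\dot\mu_H = (1-|\mu_F|^2)\,F^*(\nu_h),
\]
obtained by differentiating \eqref{e:mu-F} at $\epsilon=0$. So the real work is to extract $L_{\nu_f}\mu_F$ from the other factorization.

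First I would write the Beltrami coefficient of the composition $H^\epsilon = F^\epsilon\circ f^\epsilon$ by the chain rule \eqref{e:composition}. Writing $\tau^\epsilon = \overline{f^\epsilon_z}/f^\epsilon_z$, this reads
\[
\mu_{H^\epsilon} = \frac{\epsilon\nu_f + (\mu_{F^\epsilon}\circ f^\epsilon)\,\tau^\epsilon}{1 + \epsilon\,\bar\nu_f\,(\mu_{F^\epsilon}\circ f^\epsilon)\,\tau^\epsilon}.
\]
The crucial observation is that $(\mu_{F^\epsilon}\circ f^\epsilon)\,\tau^\epsilon$ is precisely the pullback $(f^\epsilon)^*(\mu_{F^\epsilon})$ of the type $(-1,1)$ tensor $\mu_{F^\epsilon}$ under $f^\epsilon$, since $(f^\epsilon_z)^{-1}(\overline{f^\epsilon})_{\bar z}=\tau^\epsilon$. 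Hence, by the definition of the Lie derivative recalled just before the proposition, the $\epsilon$-derivative of this quantity at $0$ is exactly $L_{\nu_f}\mu_F$. Differentiating the displayed quotient at $\epsilon=0$, where $f^0=\mathrm{id}$, $\tau^0=1$, and $\mu_{F^0}=\mu_F$, then yields
\[
\dot\mu_H = \nu_f + L_{\nu_f}\mu_F - \bar\nu_f\,\mu_F^2.
\]

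Finally I would equate this with the expression $\dot\mu_H = (1-|\mu_F|^2)F^*(\nu_h)$ coming from Lemma \ref{l:key} and solve for the Lie derivative, obtaining
\[
L_{\nu_f}\mu_F = (1-|\mu_F|^2)\,F^*(\nu_h) - \bigl(\nu_f - \bar\nu_f\,\mu_F^2\bigr),
\]
which is \eqref{e:var-nu-F}. The main obstacle is purely a matter of careful bookkeeping: one must match the paper's pullback convention $(f^\epsilon)^*(\omega^\epsilon)$ for a $(-1,1)$ tensor with the factor $(\mu_{F^\epsilon}\circ f^\epsilon)\tau^\epsilon$ appearing in the composition formula, so that its derivative is genuinely the tensorial Lie derivative $L_{\nu_f}\mu_F$ rather than the naive derivative of $\mu_{F^\epsilon}\circ f^\epsilon$ alone — it is the extra $\tau^\epsilon$ factor that supplies the $\bar\nu_f\mu_F^2$ correction term. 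Once that identification is secured, everything reduces to a single differentiation of a M\"obius-type quotient at the identity.
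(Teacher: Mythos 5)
Your proposal is correct and is essentially the paper's own argument: the paper likewise equates the two expressions for $\mu_{H^\epsilon}$ coming from the factorizations $h^\epsilon\circ F$ and $F^\epsilon\circ f^\epsilon$ (its displayed identity \eqref{e:Belt-rel}) and differentiates at $\epsilon=0$, with the key bookkeeping point being exactly the one you flag, namely that $(\mu_{F^\epsilon}\circ f^\epsilon)\,\overline{f^\epsilon_z}/f^\epsilon_z$ is the tensorial pullback $(f^\epsilon)^*(\mu_F^\epsilon)$ whose derivative is $L_{\nu_f}\mu_F$. Your quotient-rule computation and the resulting identity match \eqref{e:var-nu-F1} verbatim.
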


\begin{proof}
From the commutative diagram in \eqref{e:CD-1}, we have the following relation:
\begin{equation}\label{e:Belt-rel}
\frac{\mu_F +  F^*(\nu^{\epsilon}_h)}{1+ \bar{\mu}_F F^*(\nu^{\epsilon}_h)} =\frac{ \nu^{\epsilon}_f 
+(f^\epsilon)^*(\mu_F^\epsilon)}{1+\bar{\nu}^{\epsilon}_f (f^\epsilon)^*(\mu_F^\epsilon)}.
\end{equation}
Taking the derivative $\epsilon=0$ of this equation, we obtain:
\begin{equation}\label{e:var-nu-F1}
(1-|\mu_F|^2)F^*(\nu_h)= (\nu_f+L_{\nu_f}\mu_F) -\bar{\nu}_f\mu_F^2.
\end{equation}
This completes the proof.
\end{proof}



\begin{proposition}\label{p:var-Hopf}
For a family of Hopf differentials $\Phi^\epsilon = e^{\psi^\epsilon\circ F^\epsilon}F^\epsilon_z \overline{F}^\epsilon_z$ on 
$\mathbb{D}_{z^\epsilon}$ satisfying the commutative diagram \eqref{e:CD-1}, the Lie derivative of $\Phi$ along the direction of $\nu_f$ is given by
\begin{equation}\label{e:var-Hopf}
\begin{split}
L_{{\nu}_f}\, \Phi =&\, \Phi \big(F^*({{\nu_h}})\bar{\mu}_F-{\bar{\nu}_f}\mu_F\big) +\overline{\Phi} \big( F^*(\bar{{\nu}}_h)\mu_F^{-1}-\bar{{\nu}}_f\mu_F^{-1} \big)\\
=&\, e^\phi  \big(F^*({{\nu_h}})\bar{\mu}_F^2-{\bar{\nu}_f}|\mu_F|^2\big) + e^\phi \big( F^*(\bar{{\nu}}_h)-\bar{{\nu}}_f \big) .
\end{split}
\end{equation}
\end{proposition}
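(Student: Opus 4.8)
The plan is to differentiate the pulled-back Hopf differential $(f^\epsilon)^*\Phi^\epsilon=\Phi^\epsilon\circ f^\epsilon\,(f^\epsilon_z)^2$ at $\epsilon=0$, routing the computation through the composite $H^\epsilon$ of \eqref{e:CD-1} rather than through the deformed harmonic map $F^\epsilon$, which is hard to control directly. First I would introduce the auxiliary $(2,0)$-density $\Psi^\epsilon:=e^{\psi^\epsilon\circ H^\epsilon}\,H^\epsilon_z\,\overline{H^\epsilon_{\bar z}}$, namely the $dz^2$-coefficient of the pulled-back metric $(H^\epsilon)^*\big(e^{\psi^\epsilon}|du_\epsilon|^2\big)$ on the fixed disc $(\mathbb{D},z)$. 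Writing $H^\epsilon=F^\epsilon\circ f^\epsilon$ and applying the chain rule, both $H^\epsilon_z$ and $\overline{H^\epsilon_{\bar z}}$ split into a term carrying $f^\epsilon_z$ and a term carrying $\overline{f^\epsilon_{\bar z}}$; multiplying out shows that $(f^\epsilon)^*\Phi^\epsilon$ is exactly the $(f^\epsilon_z)^2$-part of $\Psi^\epsilon$. Since $f^0=\mathrm{id}$ makes $\overline{f^\epsilon_{\bar z}}=O(\epsilon)$, the remaining parts of $\Psi^\epsilon$ contribute to first order only through $\tfrac{d}{d\epsilon}\big|_0\overline{f^\epsilon_{\bar z}}=\bar\nu_f$ from \eqref{e:der-Bel}, and a short count of the surviving term gives
\begin{equation*}
L_{\nu_f}\Phi=\frac{d}{d\epsilon}\Big|_0\Psi^\epsilon-e^\phi(1+|\mu_F|^2)\,\bar\nu_f .
\end{equation*}

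Next I would evaluate $\tfrac{d}{d\epsilon}\big|_0\Psi^\epsilon$ from the other factorization $H^\epsilon=h^\epsilon\circ F$, in which $F$ is fixed and only $h^\epsilon$ varies, with $h^\epsilon_{\bar u}=\epsilon\,\nu_h\,h^\epsilon_u$. Expanding, $\Psi^\epsilon$ equals $(\Lambda^\epsilon\circ F)$ times the bracket $F_z\overline{F_{\bar z}}+\epsilon(\bar\nu_h\circ F)F_z^2+\epsilon(\nu_h\circ F)\overline{F_{\bar z}}^2+O(\epsilon^2)$, where $\Lambda^\epsilon=e^{\psi^\epsilon\circ h^\epsilon}|h^\epsilon_u|^2$ and $\Lambda^0=e^{\psi}$. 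Its derivative at $\epsilon=0$ has two sources: the two first-order bracket terms, which after multiplication by $e^{\psi\circ F}$ and use of $F^*(\nu_h)=(\nu_h\circ F)\,\overline{F_z}/F_z$, $e^\phi=e^{\psi\circ F}|F_z|^2$ and $\mu_F=F_{\bar z}/F_z$ become $e^\phi F^*(\nu_h)\bar\mu_F^2$ and $e^\phi F^*(\bar\nu_h)$ (here also $F^*(\bar\nu_h)=\overline{F^*(\nu_h)}$, reading $\bar\nu_h$ as a $(1,-1)$-tensor); and the term $(\dot\Lambda\circ F)\,F_z\overline{F_{\bar z}}$ coming from the conformal factor. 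The main obstacle is to show this last term vanishes, i.e.\ $\dot\Lambda=0$. This holds because $\Lambda^\epsilon$ is precisely the conformal factor of the hyperbolic metric attached to the deformed complex structure $\epsilon\nu_h$ on the target disc, and the first variation of that conformal factor vanishes along a \emph{harmonic} Beltrami differential --- the classical computation going back to Ahlfors. This is exactly where the standing assumption $\nu_h\in\Omega^{-1,1}(\mathbb{D})$ and the Liouville equation for $e^\psi$ enter in an essential way; without harmonicity the stated formula would fail.

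Granting $\dot\Lambda=0$, combining the two steps yields
\begin{equation*}
L_{\nu_f}\Phi=e^\phi F^*(\nu_h)\bar\mu_F^2+e^\phi F^*(\bar\nu_h)-e^\phi(1+|\mu_F|^2)\bar\nu_f
=e^\phi\big(F^*(\nu_h)\bar\mu_F^2-\bar\nu_f|\mu_F|^2\big)+e^\phi\big(F^*(\bar\nu_h)-\bar\nu_f\big),
\end{equation*}
which is the second equality in \eqref{e:var-Hopf}. The first equality then follows at once from $\Phi=\overline{\mu_F}\,e^\phi$ and $\overline\Phi=\mu_F\,e^\phi$, both consequences of the harmonic-map relation $\mu_F=\overline\Phi\,e^{-\phi}$ used in Proposition \ref{p:G-F-same}: these turn $e^\phi\bar\mu_F^2$, $e^\phi|\mu_F|^2$ into $\Phi\bar\mu_F$, $\Phi\mu_F$, and $e^\phi$ into $\overline\Phi\,\mu_F^{-1}$, whereupon the terms regroup into the $\Phi$- and $\overline\Phi$-parts. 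I expect the genuinely delicate input to be the vanishing $\dot\Lambda=0$, while the remaining labor --- the bookkeeping of the pullback conjugation conventions and the isolation of the $\bar\nu_f$-correction --- is routine.
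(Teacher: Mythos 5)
Your proof is correct and follows essentially the same route as the paper's: both exploit the two factorizations $H^\epsilon=F^\epsilon\circ f^\epsilon=h^\epsilon\circ F$ of \eqref{e:CD-1} and reduce the only nontrivial vanishing to Ahlfors' lemma on the first variation of the hyperbolic density along a harmonic Beltrami differential (the paper's identity \eqref{e:Ahl} is exactly your $\dot{\Lambda}=0$). Your repackaging through the auxiliary density $\Psi^\epsilon$ is only a bookkeeping variant of the paper's direct expansion of the derivative of $(f^\epsilon)^*\Phi^\epsilon$ in \eqref{e:var-Hopf-1}--\eqref{e:var-Hopf-2}.
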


\begin{proof}
From the definition of the Lie derivative, we have
\begin{equation}\label{e:var-Hopf-1}
L_{{\nu}_f}\, \Phi =\frac{\partial}{\partial \epsilon}\Big|_{\epsilon=0} \Big( e^{\psi^\epsilon\circ F^\epsilon\circ f^\epsilon} F^\epsilon_z\circ f^\epsilon\, \overline{F}_{{z}}^{\epsilon}\circ f^{\epsilon}\, f^{\epsilon}_z {f}^\epsilon_{{z}} \Big).
\end{equation}
From the relation $h^\epsilon \circ F= F^{\epsilon}\circ f^{\epsilon}$, we obtain
\begin{equation}\label{e:var-Hopf-2}
(\dot{h}\circ F)_z= \frac{\partial}{\partial\epsilon}\Big|_{\epsilon=0} \Big(F^\epsilon_z\circ f^{\epsilon} f^\epsilon_z\Big) + F_{\bar{z}} \dot{\bar{f}}_z.
\end{equation}
By combining  these expressions, we derive
\begin{align*}
L_{{\nu_f}}\, \Phi =&\ e^{\psi\circ F} \Big( \dot{\psi}+\psi_u\dot{h}+\psi_{\bar{u}}\dot{\bar{h}}\Big)\circ F F_z \overline{F}_{{z}} \\
&+ e^{\psi\circ F} \Big( (\dot{h}\circ F)_z - F_{\bar{z}} \dot{\bar{f}}_z\Big)\overline{F}_{{z}}
+  e^{\psi\circ F} F_z \Big( (\dot{\bar{h}}\circ F)_{\bar{z}} - \overline{F}_{\bar{z}} \dot{\bar{f}}_{{z}}\Big)\\
=&\ e^{\psi\circ F} \Big(\dot{\psi}+\psi_u\dot{h}+\psi_{\bar{u}}\dot{\bar{h}}+ \dot{h}_z + \dot{\bar{h}}_{\bar{z}} \Big)\circ F F_z \overline{F}_{{z}}\\
&+e^{\psi\circ F} \Big( \dot{h}_{\bar{z}}\circ F \overline{F}_z \overline{F}_{{z}} - \dot{\bar{f}}_z  F_{\bar{z}} \overline{F}_{{z}} 
     + \dot{\bar{h}}_{{z}}\circ F {F}_{{z}} {F}_{{z}} - \dot{\bar{f}}_{{z}}  F_{{z}} \overline{F}_{\bar{z}} \Big).
\end{align*}     
Using the following identity for variations of hyperbolic metrics,
\begin{equation}\label{e:Ahl}
\dot{\psi}+\psi_u\dot{h}+\psi_{\bar{u}}\dot{\bar{h}}+ \dot{h}_z + \dot{\bar{h}}_{\bar{z}}=0,
\end{equation}
which follows from the Ahlfors' lemma given in \cite{Ahl61}, we obtain
\begin{align*}
L_{{\nu_f}}\, \Phi=&\ e^{\psi\circ F} F_z\overline{F}_z \Big( \dot{h}_{\bar{z}}\circ F \overline{F}_{\bar{z}}\overline{F}_z (F_z\overline{F}_{\bar{z}})^{-1} -\dot{\bar{f}}_{{z}}F_{\bar{z}}F_z^{-1}\Big)\\ &+
e^{\psi\circ F} F_{\bar{z}}\overline{F}_{\bar{z}} \Big( \dot{\bar{h}}_z\circ F F_z^2 (F_{\bar{z}}\overline{F}_{\bar{z}})^{-1} -\dot{\bar{f}}_z
F_zF_{\bar{z}}^{-1}\Big)\\
=&\  \Phi \big(F^*({{\nu_h}})\bar{\mu}_F-{\bar{\nu}_f}\mu_F\big) +\overline{\Phi} \big( F^*(\bar{{\nu}}_h)\mu_F^{-1}-\bar{{\nu}}_f\mu_F^{-1} \big).
\end{align*}
This completes the proof.
\end{proof}

\begin{remark}
The expressions for the Lie derivatives in  \eqref{e:var-nu-F} and \eqref{e:var-Hopf}  are formulated using both
the harmonic Beltrami differentials on the source disc $\mathbb{D}_z$ and the pullback by $F$ of the harmonic Beltrami differentials on the target disc.
This formulation arises because we consider the general case where deformations occur simultaneously on both the source and target discs of the harmonic map $F$ .  
\end{remark}

We now consider a deformation of the induced harmonic maps associated with
\eqref{e:maximal-conformal-embedding},  given by  a family of harmonic diffeomorphisms 
$$
F_\pm^\epsilon: \mathbb{D}^\epsilon\,  \longrightarrow \, \mathbb{D}^\epsilon_{\pm},
$$
where $\mathbb{D}^\epsilon:=\mathbb{D}_{z^\epsilon}$ and $\mathbb{D}^\epsilon_\pm:=\mathbb{D}_{z_\pm^\epsilon}$. Combining both cases, we introduce the following commutative diagram:
\begin{equation}\label{e:CD-2}
\begin{tikzcd}
\Large
\mathbb{D}_+ \arrow[d,swap, "h_+^\epsilon"] & \mathbb{D}  \arrow[l, swap, "F_+"]  \arrow[r, "F_-"]  \arrow[d, swap, "f^\epsilon"] &  \mathbb{D}_- \arrow[d, "h_-^\epsilon"] \\
\mathbb{D}_+^\epsilon & \mathbb{D}^\epsilon \arrow[l, "F^\epsilon_+"] \arrow[r, swap, "F^{\epsilon}_-"]   &  \mathbb{D}_-^\epsilon
\end{tikzcd}
\end{equation}
such that $h^\epsilon_{\pm}\circ F_\pm=F^\epsilon_{\pm}\circ f^\epsilon$.  Here  we use the notations $\mathbb{D}=\mathbb{D}^0$ and $\mathbb{D}_\pm=\mathbb{D}^0_{\pm}$ for simplicity.  Moreover, we assume that  $f^\epsilon$ and $h^\epsilon_{\pm}$ satisfy the corresponding equalities to \eqref{e:Bel-eq} and 
\eqref{e:der-Bel}. 

\begin{proposition}\label{p:var-Hopf-pm}
For a family of Hopf differentials $\Phi^\epsilon=\Phi^\epsilon_+$ of $F^\epsilon_+$ satisfying the commutative diagram \eqref{e:CD-2}, we have
\begin{equation}\label{e:var-Hopf-pm}
L_{\nu_f} \Phi = \frac{1}{2} e^\phi\Big( (F^*_+(\nu_+) - F^*_-(\nu_-)) \bar{\mu}_F^2+ (F^*_+(\bar{\nu}_+) - F^*_-(\bar{\nu}_-)) \Big),
\end{equation}
\begin{equation}\label{e:var-Bel-pm}
\nu_f= \frac12 (1+|\mu_F|^2)^{-1} \Big( (F^*_+(\nu_+) + F^*_-(\nu_-)) + (F^*_+(\bar{\nu}_+) + F^*_-(\bar{\nu}_-))\mu_F^2 \Big),
\end{equation}
where $\nu_{\pm}$ denotes the harmonic Beltrami differential $\nu_{h_\pm}$ and $\mu_F=\mu_{F_+}$.
\end{proposition}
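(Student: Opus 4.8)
The plan is to derive both identities by applying the single-map variational formula of Proposition~\ref{p:var-Hopf} to each of the two induced Gauss maps $F_+$ and $F_-$ separately, and then to take the difference and the sum of the two resulting identities. The essential structural inputs are the sign relations between the two factors: by Proposition~\ref{p:G-F-same} and the identity $\mu_{F_\pm}=\pm\overline{\Phi}\,e^{-\phi}$ established in its proof, one has $\mu_{F_-}=-\mu_{F_+}=-\mu_F$ and $\Phi(F_-)=-\Phi(F_+)=-\Phi$, while the conformal factor $e^\phi$ in \eqref{e:G-F} is common to both maps. The diagram \eqref{e:CD-2} is arranged precisely so that a single source deformation $f^\epsilon$, with infinitesimal Beltrami differential $\nu_f$, feeds both harmonic maps at once, with target deformations $h_\pm^\epsilon$ yielding $\nu_\pm$. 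Hence Proposition~\ref{p:var-Hopf} applies verbatim to $F=F_+$ (with $\nu_h=\nu_+$ and $\mu_F=\mu_{F_+}$) and to $F=F_-$ (with $\nu_h=\nu_-$ and $\mu_F=\mu_{F_-}=-\mu_F$).

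First I would write down the two instances of the second form of \eqref{e:var-Hopf}. For $F_+$ the left-hand side is $L_{\nu_f}\Phi$, whereas for $F_-$ it is $L_{\nu_f}\Phi(F_-)=-L_{\nu_f}\Phi$. The decisive observation is that in the $e^\phi$-form of \eqref{e:var-Hopf} every occurrence of the Beltrami differential enters only through $\bar\mu_F^2$ and $|\mu_F|^2$, both invariant under $\mu_F\mapsto-\mu_F$; consequently the two right-hand sides differ only in the replacement of $\bigl(F_+^*(\nu_+),F_+^*(\bar\nu_+)\bigr)$ by $\bigl(F_-^*(\nu_-),F_-^*(\bar\nu_-)\bigr)$, the terms $\bar\nu_f|\mu_F|^2$ and $\bar\nu_f$ being identical in both. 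Subtracting the $F_-$ identity from the $F_+$ identity then cancels these common terms, the left-hand side becomes $2L_{\nu_f}\Phi$, and division by $2$ gives \eqref{e:var-Hopf-pm} immediately.

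To obtain \eqref{e:var-Bel-pm} I would instead add the two instances. Now the left-hand side is $L_{\nu_f}\Phi+L_{\nu_f}\Phi(F_-)=0$, so the summed right-hand side must vanish. Since the only dependence on $\nu_f$ in \eqref{e:var-Hopf} is through $\bar\nu_f$, collecting its coefficient produces $-2\,\bar\nu_f(1+|\mu_F|^2)\,e^\phi$, and the surviving terms give a single linear equation that solves directly for $\bar\nu_f$; conjugating—using the convention $\overline{F^*(\nu)}=F^*(\bar\nu)$ and $\overline{\bar\mu_F^2}=\mu_F^2$—then reproduces \eqref{e:var-Bel-pm}. As a cross-check one may instead sum the two instances of Proposition~\ref{p:var-nu-F} applied to $F_\pm$, which yields $\nu_f-\bar\nu_f\mu_F^2=\tfrac12(1-|\mu_F|^2)\bigl(F_+^*(\nu_+)+F_-^*(\nu_-)\bigr)$; solving this together with its conjugate as a $2\times2$ conjugate-linear system and using $1-|\mu_F|^4=(1-|\mu_F|^2)(1+|\mu_F|^2)$ gives the same result.

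The substantive analytic content is already carried by Propositions~\ref{p:var-Hopf} and \ref{p:var-nu-F}, so the remaining work is purely algebraic. The main point requiring care—and essentially the only place an error could creep in—is the bookkeeping of signs and conjugations: one must verify that $\mu_F\mapsto-\mu_F$ leaves $\bar\mu_F^2$, $|\mu_F|^2$, and $\mu_F^2$ unchanged (this is exactly what forces the clean $\pm$ symmetry that makes the cancellations work), track the sign flip $\Phi(F_-)=-\Phi$ on the left-hand sides, and fix the conjugation convention for $F^*$ so that the final grouping of terms matches the stated form of \eqref{e:var-Bel-pm}. Once these conventions are pinned down, the difference produces the Hopf-differential variation and the sum produces the source Beltrami differential, completing the proof.
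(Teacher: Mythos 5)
Your proposal is correct, and for the first identity \eqref{e:var-Hopf-pm} it is exactly the paper's argument: apply Proposition \ref{p:var-Hopf} to $F_+^\epsilon$ and $F_-^\epsilon$, use $\Phi_-=-\Phi_+$ together with the invariance of $\bar\mu_F^2$ and $|\mu_F|^2$ under $\mu_F\mapsto-\mu_F$, and subtract so that the $\bar\nu_f$ terms cancel. For the second identity \eqref{e:var-Bel-pm} your primary route differs mildly from the paper's: you \emph{sum} the two instances of Proposition \ref{p:var-Hopf} and exploit $L_{\nu_f}\Phi_++L_{\nu_f}\Phi_-=0$ to isolate $\bar\nu_f$ with coefficient $-2(1+|\mu_F|^2)e^\phi$, whereas the paper sums the two instances of Proposition \ref{p:var-nu-F} to get $2\nu_f-2\bar\nu_f\mu_F^2=(1-|\mu_F|^2)\bigl(F_+^*(\nu_+)+F_-^*(\nu_-)\bigr)$ and then solves the conjugate-linear system --- which is precisely the ``cross-check'' you describe. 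Both derivations are valid and of the same length; your primary route has the small advantage of needing only one of the two variational propositions, while the paper's route keeps the information from Propositions \ref{p:var-nu-F} and \ref{p:var-Hopf} cleanly separated (the sum of one and the difference of the other), and the intermediate equation \eqref{e:var-nu-f-1} is reused later in the paper (e.g.\ in the proof of Proposition \ref{p:hol-var}). No gap either way.
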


\begin{proof}
By $\Phi_\pm= e^\phi \bar{\mu}_{F_\pm}$ and applying Proposition \ref{p:var-Hopf}  to $F^\epsilon_{\pm}$, we obtain
\begin{equation}\label{e:var-Hopf-app}
L_{{\nu}_f}\, \Phi_\pm = e^\phi \big(F_\pm^*({{\nu_\pm}})\bar{\mu}_{F_\pm}^2-{\bar{\nu}_f}|\mu_{F_\pm}|^2\big) 
+e^\phi \big( F^*_\pm(\bar{{\nu}}_\pm)-\bar{{\nu}}_f \big).
\end{equation}
Taking the difference of these equalities for $\Phi_\pm$ with $\Phi_+=-\Phi_-$, we derive equation \eqref{e:var-Hopf-pm}.
To prove \eqref{e:var-Bel-pm}, we apply Proposition \ref{p:var-nu-F} to $F^\epsilon_\pm$ and take their sum. This gives
\begin{equation}\label{e:var-nu-f-1}
2\nu_f-2\bar{\nu}_f\mu_F^2 =(1-|\mu_F|^2)\big( F^*_+(\nu_+)+F^*_-(\nu_-)\big).
\end{equation}
By combining this equation with its conjugated equality  and noting that  $\overline{F^*_\pm(\nu_\pm)}= F^*_\pm(\bar{\nu}_\pm)$, we obtain equation \eqref{e:var-Bel-pm}.
\end{proof}

\begin{remark}
In the identities \eqref{e:var-Hopf-pm} and \eqref{e:var-Bel-pm}, the pullbacks of the Beltrami differentials $\nu_\pm$, $\bar{\nu}_\pm$ by $F_\pm$, as well as
the Beltrami differentials $\mu_{F_\pm}$, all share the same tensor type $(-1,1)$.  However,  their  roles and dependencies  differ fundamentally. The Beltrami differential $\mu_{F_\pm}$ 
depends solely on the map $F_\pm:\mathbb{D}_z \to \mathbb{D}_{z_\pm}$, and plays the role of a coordinate over $T_0(1)$.
In contrast, the pullbacks of $\nu_\pm$, $\bar{\nu}_\pm$ by $F_\pm$  arise from the deformation and
represent tangent vectors on $T_0(1)$, thereby playing the role of vector fields.
\end{remark}

\subsection{Symplectic forms}

As described in Subsection \ref{ss:Hilbert},  for a point $\mu\in T_0(1)$, the local coordinate system over an open neighborhood $V_\mu$ around  $\mu$ is modeled on the Hilbert space $A_2(\mathbb{D}^*)$. More precisely,  fixing an orthonormal basis $\{\phi_k\}_{k\in\mathbb{Z}}$ of $A_2(\mathbb{D}^*)$,  the inverse images of $\phi_k$'s  under the map given in \eqref{e:coordinate-map}
determine the coordinate system $\{\mu_k\}_{k\in\mathbb{Z}}$ over $V_\mu$.  Thus, any point $\nu\in V_\mu$
can be expressed as
$$
\nu= \sum_{k\in\mathbb{Z}} \zeta_k\, \mu_k,
$$
for small complex  coefficients $\{\zeta_k\}_{k\in\mathbb{Z}}$. 

Similarly, for a point $(\mu, \Phi)$ in the holomorphic cotangent bundle $T^*T_0(1)$, the local coordinate system on an
open neighborhood around $(\mu,\Phi)$  is given by
$$\{\zeta_k, \eta_k\}_{k\in\mathbb{Z}}.$$
Here the fiber coordinates $\{\eta_k\}_{k\in\mathbb{Z}}$ are also determined by  the fixed orthonormal basis
of $A_2(\mathbb{D}^*)$, via the identification $T^*_{[\mu]} T_0(1) \cong A_2(\mathbb{D}^*)$.  Consequently,  the differentials $(d\zeta_k,d\eta_k)_{k\in\mathbb{Z}}$ provide a basis for the cotangent space $T^*_{(\mu,\Phi)}(T^*T_0(1))$. Note that $d\zeta_k$ can be represented by  harmonic Beltrami differentials,  while $d\eta_k$ corresponds to holomorphic quadratic differentials.

We now examine how the quantities $d\zeta_k$ and $d\eta_k$ arise in the context of the deformations of the maximal
conformal embeddings described in \eqref{e:maximal-conformal-embedding}. To this end, let us begin by considering the following set up.

Let $F_\pm:\mathbb{D}_z \to \mathbb{D}_\pm$  be harmonic diffeomorphisms as given in the commutative diagram \eqref{e:CD-2},
and denote by $\mu_\pm$ the Beltrami differentials representing $\mathbb{D}_{\pm }$ respectively. As described in Subsection  \ref{ss:Hilbert},
we fix an orthonormal basis  in $H^{-1,1}(\mathbb{D})$ and obtain the corresponding orthonormal basis for $T_{[\mu]} T_0(1)$ by applying right translations: $D_0 R_{[\mu]} (H^{-1,1}(\mathbb{D}))$ for any $[\mu]\in T_0(1)$.
We denote by $\{\nu_{\pm,k}\}_{k\in\mathbb{Z}}$ the resulting orthonormal basis of the tangent space $T_{[\mu_\pm]}T_0(1)$. 

Given a harmonic Beltrami differential
$\nu_{\pm,k}$ on $\mathbb{D}_{\pm}$, we solve the Beltrami  equation 
\begin{equation}
(h^\epsilon_{\pm})_{\bar{z}} =\epsilon \nu_{\pm,k} (h^\epsilon_{\pm})_z \qquad \text{where} \quad z=z_\pm,
\end{equation}
to obtain a quasi-conformal map $h^\epsilon_{\pm}:\mathbb{D}_\pm\to\mathbb{D}_\pm^\epsilon$. Note that $h^\epsilon_\pm$  and
$\mathbb{D}^\epsilon_\pm$ depend on index $k$, though
we supress this dependence in the notation for simplicity.

Given the two deformed discs $\mathbb{D}^\epsilon_\pm$ depending on the same index $k$, 
we construct a maximal conformal embedding  $\sigma^{\epsilon} :\mathbb{D}^{\epsilon}\to \mathbb{A}\mathrm{d}\mathbb{S}^{2,1}$, as the inverse of the $\mathrm{Mess}$ map,  such that the resulting induced Gauss maps coincide with the given harmonic maps
$$
F_\pm^\epsilon:\mathbb{D}^\epsilon\to \mathbb{D}^\epsilon_\pm.
$$ Correspondingly, there exists a unique quasi-conformal map 
$$
f^\epsilon:\mathbb{D}\to \mathbb{D}^{\epsilon}
$$
satisfying the relations
\begin{equation}
h^\epsilon_+\circ F_+ = F^\epsilon_+\circ f^\epsilon, \qquad  h^\epsilon_- \circ F_- = F^\epsilon_-\circ f^\epsilon.
\end{equation}
This follows from the fact that the deformed disk $\mathbb{D}^{\epsilon}$ is realized as the graph of the map $F^{\epsilon}_+\circ (F^{\epsilon}_-)^{-1}$ in $\mathbb{D}_+^{\epsilon}\times \mathbb{D}_-^{\epsilon}$,
and each point in $\mathbb{D}^\epsilon$ is determined by its coordinates via the maps $F^\epsilon_\pm$.
Note that the both the map $f^\epsilon$ and the deformed disc $\mathbb{D}^\epsilon$ depend on the index $k$,
although we omit this dependence in the notation for simplicity. 

We now define $\nu^\epsilon_k$ as the Beltrami differential on $\mathbb{D}$ corresponding to $f^\epsilon_k$, that is,
$$
(f^\epsilon_k)_{\bar{z}} =  \nu^{\epsilon}_k\, (f^\epsilon_k)_z.
$$
Let $\nu_k$ denotes the harmonic Beltrami differential given by the derivative of $\nu_k^\epsilon$ at $\epsilon=0$.
By construction, the basis $\{\nu_k\}_{k\in\mathbb{Z}}$  coincides with the one determined by $\{\nu_{\pm, k}\}$ via the right translations.
We define the variation of $\mu$ along $\nu_k$ by
\begin{equation}\label{e:def-var-mu}
\delta_k \mu:={\nu_k}=\frac12 (1+|\mu_F|^2)^{-1} \Big( (F^*_+(\nu_{+,k}) + F^*_-(\nu_{-,k})) 
+ (F^*_+(\bar{\nu}_{+,k}) + F^*_{-}(\bar{\nu}_{-,k}) )\mu_F^2 \Big)
\end{equation}
where the second equality follows from  equation\eqref{e:var-Bel-pm}. 
By \eqref{e:var-nu-F}, we  observe that $F^*_\pm(\nu_{\pm,k})$ is determined by $\nu_k$ and $\mu_F=\mu_{F_+}$.
Similarly,  the variation of $\Phi$ along $\nu_k$ is defined by
\begin{equation}\label{e:def-var-phi}
\delta_k\Phi:=L_{\nu_k}\Phi= \frac{1}{2} e^\phi\Big( (F^*_+(\nu_{+,k}) - F^*_-(\nu_{-,k})) \bar{\mu}_F^2+ (F^*_+(\bar{\nu}_{+,k}) - F^*_-(\bar{\nu}_{-,k})) \Big)
\end{equation}
where the second equality follows from  equation\eqref{e:var-Hopf-pm}.

\begin{proposition}\label{p:diffeo}
The differential of  the $\mathrm{Mess}$ map
$$\mathrm{Mess}:T^*T_0(1)\to T_0(1)\times T_0(1)$$
is an isomorphism.
\end{proposition}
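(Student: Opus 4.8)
The plan is to supply the one missing ingredient for Theorem \ref{t:Mess-diff}: since bijectivity of $\mathrm{Mess}:T^*T_0(1)\to T_0(1)\times T_0(1)$ is already established, by the inverse function theorem it suffices to show that at each $(\mu,\Phi)$ the differential $d\,\mathrm{Mess}$ is a topological isomorphism of the Hilbert tangent spaces. Rather than compute $d\,\mathrm{Mess}$ directly, I would exploit the variational formulae already derived: the pair \eqref{e:def-var-mu}--\eqref{e:def-var-phi} expresses the source variation $(\delta\mu,\delta\Phi)$ of a point of $T^*T_0(1)$ in terms of the target directions $(\nu_{+},\nu_{-})\in T_{[\mu_+]}T_0(1)\oplus T_{[\mu_-]}T_0(1)$, so these formulae are precisely the components of $d\,\mathrm{Mess}^{-1}$. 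Hence it is enough to prove that the bounded linear map $(\nu_{+},\nu_{-})\mapsto(\delta\mu,\delta\Phi)$ is invertible with bounded inverse.

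The key reduction is to pointwise linear algebra. Writing $A_\pm:=F_\pm^*(\nu_\pm)$ and, as in the proof of Proposition \ref{p:var-Hopf-pm}, using $\overline{F_\pm^*(\nu_\pm)}=F_\pm^*(\bar\nu_\pm)$, I would introduce the symmetric and antisymmetric combinations $S:=A_++A_-$ and $D:=A_+-A_-$. Then \eqref{e:def-var-mu} and \eqref{e:def-var-phi} become
\[
2(1+|\mu_F|^2)\,\delta\mu = S + \mu_F^2\,\bar S,\qquad
2\,e^{-\phi}\,\delta\Phi = \bar\mu_F^2\,D + \bar D,
\]
with $\mu_F=\mu_{F_+}$. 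Together with their complex conjugates, these are two \emph{decoupled} $2\times2$ pointwise systems for $(S,\bar S)$ and for $(D,\bar D)$, each with coefficient determinant of modulus $1-|\mu_F|^4$. Thus $\delta\mu$ controls only the symmetric part $S$ and $\delta\Phi$ only the antisymmetric part $D$, while $(S,D)\leftrightarrow(A_+,A_-)$ via $A_\pm=(S\pm D)/2$ is a trivial bijection.

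Since $F$ is quasiconformal we have $\|\mu_F\|_\infty\le k<1$ (indeed $|\mu_F|\to0$ at $\partial\mathbb{D}$ by Proposition \ref{t:total-curv}), so $1-|\mu_F|^4\ge 1-k^4>0$ uniformly and both systems are uniformly invertible; explicitly
\[
S=\frac{2\big(\delta\mu-\mu_F^2\,\overline{\delta\mu}\big)}{1-|\mu_F|^2},\qquad
D=\frac{2\,e^{-\phi}\big(\overline{\delta\Phi}-\mu_F^2\,\delta\Phi\big)}{1-|\mu_F|^4},
\]
which recovers $A_\pm$, hence $\nu_\pm=(F_\pm^{-1})^*A_\pm$, uniquely and boundedly from $(\delta\mu,\delta\Phi)$. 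This proves the algebraic bijectivity of the differential and the uniform pointwise boundedness of its inverse.

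The step I expect to be the real work is not the algebra but the functional-analytic bookkeeping that turns this pointwise isomorphism into a Hilbert-space isomorphism respecting the Weil--Petersson norms. Here I would convert all quantities to $e^{\phi}$-weighted $L^2$ norms on $\Sigma$ using $e^{\psi\circ F_\pm}|(F_\pm)_z|^2=e^{\phi}$ from Proposition \ref{p:G-F-same}, so that $\int_{\mathbb{D}_\pm}|\nu_\pm|^2e^{\psi_\pm}$ is comparable to $\int_\Sigma|A_\pm|^2e^{\phi}$, and note that the source norms $\int_\Sigma|\delta\mu|^2e^{\phi}$ and $\int_\Sigma|\delta\Phi|^2e^{-\phi}$ are equivalent to $\int_\Sigma|S|^2e^{\phi}$ and $\int_\Sigma|D|^2e^{\phi}$ by the uniform bounds $1-k^4\le 1-|\mu_F|^4\le 1$. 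Combined with the comparability $e^{\psi_z}\le e^{\phi}\le a^{-2}e^{\psi_z}$ of the induced and hyperbolic densities (as in the proof of Proposition \ref{t:total-curv}), this yields the two-sided norm equivalence and hence the isomorphism of Hilbert spaces. The essential use of the Weil--Petersson condition enters exactly here, guaranteeing both the finiteness of the norms and the decay of $\mu_F$ needed for the uniform determinant bound.
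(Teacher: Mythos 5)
Your proposal is correct and follows essentially the same route as the paper: both reduce $d\,\mathrm{Mess}^{-1}$ to the formulae \eqref{e:def-var-mu}--\eqref{e:def-var-phi}, decompose into the diagonal/anti-diagonal (your symmetric/antisymmetric $S$, $D$) parts, and invert the resulting decoupled pointwise $2\times2$ systems using that the determinant $1-|\mu_F|^4$ is nonvanishing since $|\mu_F|<1$. Your version is somewhat more explicit, writing the inverse in closed form and spelling out the Hilbert-norm equivalences (via the uniform bound $\|\mu_F\|_\infty\le k<1$ and the comparability of $e^{\phi}$ with $e^{\psi}$) that the paper leaves implicit.
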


\begin{proof}
By the proof of Theorem \ref{t:Mess-diff}, the $\mathrm{Mess}$ map is bijective, implying that its inverse $\mathrm{Mess}^{-1}$ is well-defined.
We now show that the differential of $\mathrm{Mess}^{-1}$ is an isomorphism. First, we observe that the differential of $\mathrm{Mess}^{-1}$
is given by \eqref{e:def-var-mu} and \eqref{e:def-var-phi}.
To analyze these maps, we decompose the tangent space $T(T_0(1)\times T_0(1))$ at
$z_\pm|_{\mathbb{S}^1}$ into the tangent space of the diagonal and the tangent space of  the anti-diagonal.
Along the tangent space of the diagonal, we have $F^*_+(\nu_{+,k})= F^*_-(\nu_{-,k})$.  Thus, the component of the differential of $\mathrm{Mess}^{-1}$ given in \eqref{e:def-var-phi} vanishes over this subspace. If the component of the differential of $\mathrm{Mess}^{-1}$
given in \eqref{e:def-var-mu} has a nontrivial kernel, then there must exist a harmonic Beltrami differential $\nu_+$ such that
\begin{equation*}
F^*_+(\nu_+)+F^*_+(\bar{\nu}_+)\mu_F^2=0.
\end{equation*}
Combining this with its complex conjugate equation, we obtain $F^*_+(\nu_+) (1-|\mu_F|^4)=0$, which can not occur
since $|\mu_F|<1$ and $F_+$ is a diffeomorphism. Hence, the component of the differential of $\mathrm{Mess}^{-1}$
given in \eqref{e:def-var-mu} is injective. 

To show that this component is surjective over the tangent space of the diagonal, let $\nu$ be a harmonic Beltrami differential
representing a tangent vector $\delta\mu$,  Then, there exists a Beltrami differential $\nu_+$ satisfying
\begin{equation*}
\nu-\bar{\nu}\mu_F^2 = (1-|\mu_F|^2) F_+^*(\nu_+),
\end{equation*}
by the fact $F$ is a diffeomorphism. This implies 
\begin{equation*}
\nu= (1+|\mu_F|^2)^{-1} \Big( F^*_+(\nu_+)+ F^*_+(\bar{\nu}_+)\mu_F^2 \Big).
\end{equation*}
Since $\nu$ can be written in terms of the basis $\nu_k$'s, the same holds for $\nu_+$ in terms of the basis  $\nu_{+,k}$'s.  Thus, this component is surjective over
the tangent space of the diagonal.

Similarly, we can show that the differential of $\mathrm{Mess}^{-1}$ given in \eqref{e:def-var-phi} is bijective
over the tangent space of the anti-diagonal.  Therefore, combining these results, we conclude that the differential of $\mathrm{Mess}:T^*T_0(1)\to T_0(1)\times T_0(1)$ is an isomorphism.
This completes the proof.
\end{proof}

By Proposition \ref{p:diffeo}, we conclude that  the families $\{\delta_k\mu \}_{k\in\mathbb{Z}}$ and $\{\delta_k\Phi\}_{k\in\mathbb{Z}}$ together
form a basis for the cotangent bundle of $T^*T_0(1)$ at $(\mu,\Phi)$.  Therefore, they represent the realization of the basis
$\{d\zeta_k, d\eta_k\}_{k\in\mathbb{Z}}$ in the context of a deformation of the maximal conformal embedding \eqref{e:maximal-conformal-embedding}. 

\begin{remark}
Strictly speaking, the basis $\{d\zeta\}_{k\in\mathbb{Z}}$  is realized by the duals of the variations $\{\delta_k\mu=\nu_k\}_{k\in\mathbb{Z}}$ rather than the variations themselves. However,  for simplicity of notation, we will not distinguish between these in the subsequent constructions. 
\end{remark}

Based on this identification, the \emph{complex canonical symplectic form} $\omega_{\mathbb{C}}$ on $T^*T_0(1)$ admits the following expression:
\begin{equation*}
\omega_{\mathbb{C}}=\int_{\mathbb{D}}\ \sum_{k\in\mathbb{Z}} \delta_k\mu\wedge \delta_k\Phi\ d^2z.
\end{equation*}
Moreover, its imaginary part, referred to as the \emph{canonical symplectic form},  is given by
\begin{equation}\label{e:symplectic-def}
\omega_{\mathrm{C}}=\int_{\mathbb{D}} \Big( \sum_{k\in\mathbb{Z}} \sqrt{-1}\delta_k\Phi\wedge \delta_k \mu- \sqrt{-1}\delta_k\overline{\Phi}\wedge \delta_k \bar{\mu}\Big)\, d^2z.
\end{equation}
Note that the wedge product $\wedge$ is applied to the vector-valued expressions $F^*_{\pm}(\nu_{\pm,k})$ appearing on the right hand sides
of \eqref{e:def-var-mu} and \eqref{e:def-var-phi}.
The construction of $\omega_{\mathrm{C}}$ is globally well-defined, since all the local expressions are compatible through  right translations from a neighborhood $V_0$ of the origin in $T_0(1)$. 
While the definition of $\omega_{\mathrm{C}}$ may seem to depend on
the choice of basis $\{\nu_{\pm,k}|\, k\in\mathbb{Z}\}$, we will later show that $\omega_{\mathrm{C}}$ is in fact  independent of this choice
(see Remark \ref{r:independence-basis}).

\begin{proposition}\label{p:symplectic-form}
The following equality holds:
\begin{equation}\label{e:symplectic-form}
\begin{split}
\omega_{\mathrm{C}} =
&-\frac{\sqrt{-1}}{2}\int_{\mathbb{D}} (1-|\mu_{F_+}|^2)\, e^{\phi} \\
&\qquad\qquad \cdot \, \sum_{k\in\mathbb{Z}} \Big( F^*_+(\nu_{+,k})\wedge F^*_+(\bar{\nu}_{+,k}) -
F^*_-(\nu_{-,k})\wedge F^*_-(\bar{\nu}_{-,k}) \Big)\, d^2z.
\end{split}
\end{equation}
\end{proposition}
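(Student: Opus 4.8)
The plan is to substitute the explicit expressions \eqref{e:def-var-phi} and \eqref{e:def-var-mu} for $\delta_k\Phi$ and $\delta_k\mu$ directly into the definition \eqref{e:symplectic-def} and to reduce the resulting wedge products by elementary algebra. To keep the bookkeeping manageable I would abbreviate $A_\pm := F^*_\pm(\nu_{\pm,k})$ and $\mu := \mu_{F_+}$, using throughout the reality relation $\overline{F^*_\pm(\nu_{\pm,k})} = F^*_\pm(\bar{\nu}_{\pm,k})$ recorded in the proof of Proposition \ref{p:var-Hopf-pm}. Thus the four vector terms that occur are $A_+, A_-, \overline{A_+}, \overline{A_-}$, and, as noted after \eqref{e:symplectic-def}, the product $\wedge$ is taken formally and antisymmetrically in precisely these factors.

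With this notation \eqref{e:def-var-mu} and \eqref{e:def-var-phi} become
\[
\delta_k\mu = \tfrac{1}{2(1+|\mu|^2)}\big[(A_+ + A_-) + (\overline{A_+}+\overline{A_-})\mu^2\big], \qquad \delta_k\Phi = \tfrac{e^\phi}{2}\big[(A_+ - A_-)\bar{\mu}^2 + (\overline{A_+}-\overline{A_-})\big],
\]
while $\delta_k\bar{\mu}$ and $\delta_k\overline{\Phi}$ are their complex conjugates. The first step is to expand each of the two wedge products $\delta_k\Phi\wedge\delta_k\mu$ and $\delta_k\overline{\Phi}\wedge\delta_k\bar{\mu}$ into the six antisymmetric monomials $A_+\wedge A_-$, $\overline{A_+}\wedge\overline{A_-}$, $A_+\wedge\overline{A_+}$, $A_-\wedge\overline{A_-}$, $A_+\wedge\overline{A_-}$, $A_-\wedge\overline{A_+}$, keeping track of the scalar coefficients built from $\mu$, $|\mu|^2$ and $|\mu|^4$.

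The decisive step is to form the difference $\delta_k\Phi\wedge\delta_k\mu - \delta_k\overline{\Phi}\wedge\delta_k\bar{\mu}$ that enters \eqref{e:symplectic-def}. Comparing the expansions monomial by monomial, the terms $A_+\wedge A_-$ and $\overline{A_+}\wedge\overline{A_-}$ carry identical coefficients ($2\bar{\mu}^2$ and $2\mu^2$) in both products and hence cancel, and the off-diagonal mixed terms $A_+\wedge\overline{A_-}$, $A_-\wedge\overline{A_+}$ carry the common coefficient $1+|\mu|^4$ and cancel as well; only the diagonal terms $A_+\wedge\overline{A_+}$ and $A_-\wedge\overline{A_-}$ survive, appearing with coefficient $|\mu|^4-1$ in one product and $1-|\mu|^4$ in the other, so that the subtraction doubles them rather than cancelling. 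This yields
\[
\delta_k\Phi\wedge\delta_k\mu - \delta_k\overline{\Phi}\wedge\delta_k\bar{\mu} = -\frac{e^\phi(1-|\mu|^4)}{2(1+|\mu|^2)}\big(A_+\wedge\overline{A_+} - A_-\wedge\overline{A_-}\big).
\]

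Finally I would use $1-|\mu|^4 = (1-|\mu|^2)(1+|\mu|^2)$ to cancel the denominator, leaving the factor $(1-|\mu|^2)e^\phi$; multiplying by $\sqrt{-1}$, summing over $k$, and integrating over $\Sigma$ then reproduces \eqref{e:symplectic-form} after restoring $A_\pm = F^*_\pm(\nu_{\pm,k})$ and $\mu = \mu_{F_+}$. I expect the only genuine obstacle to be the sign-and-coefficient bookkeeping in this six-term expansion; conceptually the point is simply that the antisymmetry of $\wedge$, together with the precise coefficients dictated by \eqref{e:var-Bel-pm} and \eqref{e:var-Hopf-pm}, forces every off-diagonal cross term to drop out, which is exactly what makes the symplectic form split cleanly into its $F_+$ and $F_-$ contributions.
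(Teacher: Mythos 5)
Your proposal is correct and follows essentially the same route as the paper's own proof: substituting \eqref{e:var-Bel-pm} and \eqref{e:var-Hopf-pm} into \eqref{e:symplectic-def} and carrying out the wedge-product algebra, with the cross terms cancelling and the factor $1-|\mu_F|^4=(1-|\mu_F|^2)(1+|\mu_F|^2)$ absorbing the denominator. The only difference is organizational (you expand into the six antisymmetric monomials at once, while the paper cancels the $\bar{\mu}_F^2$- and $\mu_F^2$-weighted terms first), and your stated coefficients all check out.
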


\begin{proof}
By \eqref{e:symplectic-def}, \eqref{e:var-Hopf-pm}, and \eqref{e:var-Bel-pm}, and recalling that $\mu_F=\mu_{F_+}$, we have
\begin{align*}
&\sqrt{-1}\Big( \sum_{k\in\mathbb{Z}} \delta_k\Phi\wedge \delta_k\mu -\delta_k\overline{\Phi}\wedge \delta_k\bar{\mu}\Big) \\
=& \frac{\sqrt{-1}}4 (1+|\mu_F|^2)^{-1} \, e^{\phi}\, \sum_{k\in\mathbb{Z}}\Big( (F^*_+(\nu_{+,k}) - F^*_-(\nu_{-,k})) \bar{\mu}_F^2+ (F^*_+(\bar{\nu}_{+,k}) - F^*_-(\bar{\nu}_{-,k}) \Big)\\
&\qquad\qquad\qquad\qquad\qquad \qquad  \wedge \Big( (F^*_+(\nu_{+,k}) + F^*_-(\nu_{-,k})) + (F^*_+(\bar{\nu}_{+,k}) + F^*_-(\bar{\nu}_{-,k})\mu_F^2 \Big)\\
& -\frac{\sqrt{-1}}4 (1+|\mu_F|^2)^{-1} \, e^{\phi}\, \sum_{k\in\mathbb{Z}}\Big( (F^*_+(\bar{\nu}_{+,k}) - F^*_-(\bar{\nu}_{-,k})) {\mu}_F^2+ (F^*_+({\nu}_{+,k}) - F^*_-({\nu}_{-,k}) \Big)\\
&\qquad\qquad\qquad\qquad\qquad  \qquad\quad  \wedge \Big( (F^*_+(\bar{\nu}_{+,k}) + F^*_-(\bar{\nu}_{-,k})) + (F^*_+({\nu}_{+,k}) + F^*_-({\nu}_{-,k})\bar{\mu}_F^2 \Big).
\end{align*}
Rearranging the terms, we obtain
\begin{align*}&\sqrt{-1}\Big( \sum_{k\in\mathbb{Z}} \delta_k\Phi\wedge \delta_k\mu -\delta_k\overline{\Phi}\wedge \delta_k\bar{\mu}\Big) \\
=& \frac{\sqrt{-1}}4 (1+|\mu_F|^2)^{-1}\, e^{\phi}\, \Big[ \sum_{k\in\mathbb{Z}}
(F^*_+(\bar{\nu}_{+,k}) - F^*_-(\bar{\nu}_{-,k}) \wedge  (F^*_+({\nu}_{+,k}) + F^*_-({\nu}_{-,k}) )\\
&\qquad\qquad\qquad\qquad  \qquad + (F^*_+(\nu_{+,k}) - F^*_-(\nu_{-,k}))  \wedge  (F^*_+(\bar{\nu}_{+,k}) + F^*_-(\bar{\nu}_{-,k})|\mu_F|^4\\
&\qquad\qquad\qquad\qquad  \qquad - (F^*_+({\nu}_{+,k}) - F^*_-({\nu}_{-,k}) )\wedge (F^*_+(\bar{\nu}_{+,k}) + F^*_-(\bar{\nu}_{-,k})) \\
&\qquad\qquad\qquad\qquad  \qquad - (F^*_+(\bar{\nu}_{+,k}) - F^*_-(\bar{\nu}_{-,k})) \wedge  (F^*_+({\nu}_{+,k}) + F^*_-({\nu}_{-,k})|{\mu}_F|^4 
\Big]\\
=& \frac{\sqrt{-1}}4 (1-|\mu_F|^2)\, e^{\phi}\, \Big[ \sum_{k\in\mathbb{Z}}(F^*_+(\bar{\nu}_{+,k}) - F^*_-(\bar{\nu}_{-,k}) \wedge  (F^*_+({\nu}_{+,k}) + F^*_-({\nu}_{-,k}) )\\
&\qquad\qquad\qquad\qquad  \qquad - (F^*_+({\nu}_{+,k}) - F^*_-({\nu}_{-,k}) )\wedge (F^*_+(\bar{\nu}_{+,k}) + F^*_-(\bar{\nu}_{-,k}))\Big]\\
=&-\frac{\sqrt{-1}}2 (1-|\mu_F|^2)\, e^{\phi}\, \Big[ \sum_{k\in\mathbb{Z}} F^*_+({\nu}_{+,k})\wedge F^*_+(\bar{\nu}_{+,k}) -  F^*_-({\nu}_{-,k}) \wedge  F^*_-(\bar{\nu}_{-,k})  \Big].
\end{align*}
This completes the proof.
\end{proof}

Note that for the orthonormal  basis $\{\nu_{\pm,k}\}_{k\in\mathbb{Z}}$  with respect to the Weil-Petersson inner product on  $T_{[\mu_{\pm}]} T_0(1)$,  
the Weil-Petersson symplectic form $\omega_{\mathrm{WP}}$ on $T_0(1)$ is given by
\begin{equation}
\omega_{\mathrm{WP}}= \frac{\sqrt{-1}}{2} \int_{\mathbb{D}} e^{\psi}\big( \sum_{k\in\mathbb{Z}} \nu_{\pm,k} \wedge \bar{\nu}_{\pm,k}\big) \, d^2z.
\end{equation}

\begin{theorem}\label{t:symplectic-relation}
The map 
$$\mathrm{Mess}:T^*T_0(1)\to T_0(1)\times T_0(1)$$
is a symplectic diffeomorphism such that
\begin{equation}\label{e:symplectic-relation}
\omega_{\mathrm{C}} =  -\mathrm{Mess}^*_+ (\omega_{\mathrm{WP}}) + \mathrm{Mess}^*_-(\omega_{\mathrm{WP}}).
\end{equation}
Here, $\mathrm{Mess}_\pm:= \pi_{\pm}\circ \mathrm{Mess}$, where $\pi_\pm$ denotes the projection map from $T_0(1)\times T_0(1)$
onto the first or second factor, respectively.
\end{theorem}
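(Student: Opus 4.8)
The plan is to combine the explicit formula for $\omega_{\mathrm{C}}$ obtained in Proposition \ref{p:symplectic-form} with a direct computation of the two pullbacks $\mathrm{Mess}^*_\pm(\omega_{\mathrm{WP}})$. Since $\mathrm{Mess}$ is already a diffeomorphism by Theorem \ref{t:Mess-diff} and Proposition \ref{p:diffeo}, only the identity \eqref{e:symplectic-relation} of $2$-forms remains. Because $\mathrm{Mess}=(\mathrm{Mess}_+,\mathrm{Mess}_-)$ is a product map, the two pullbacks decouple automatically: $\mathrm{Mess}^*_+(\omega_{\mathrm{WP}})$ sees only the deformation of $\mathbb{D}_+$ and $\mathrm{Mess}^*_-(\omega_{\mathrm{WP}})$ only that of $\mathbb{D}_-$, so no cross terms between the two factors arise. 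Thus it suffices to match each of the two summands in \eqref{e:symplectic-form} with $\mp\mathrm{Mess}^*_\pm(\omega_{\mathrm{WP}})$.

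The key step is to transport the Weil--Petersson form from $\mathbb{D}_\pm$ back to $\Sigma$ along the harmonic diffeomorphism $F_\pm$. Writing $\omega_{\mathrm{WP}}$ at $[\mu_\pm]$ as $\frac{\sqrt{-1}}{2}\int_{\mathbb{D}_\pm} e^{\psi}\sum_k\nu_{\pm,k}\wedge\bar\nu_{\pm,k}\,d^2z_\pm$, I would substitute $z_\pm=F_\pm(z)$ and use the Jacobian $d^2z_\pm=(1-|\mu_{F_\pm}|^2)\,|(F_\pm)_z|^2\,d^2z$ together with the conformal-factor identity $e^{\psi\circ F_\pm}|(F_\pm)_z|^2=e^{\phi}$ of Proposition \ref{p:G-F-same}; these combine into $e^{\psi}\,d^2z_\pm=(1-|\mu_{F_\pm}|^2)\,e^{\phi}\,d^2z$. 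At the same time, by the construction of the basis $\{\delta_k\mu,\delta_k\Phi\}$ and by \eqref{e:def-var-mu}--\eqref{e:def-var-phi}, the deformation direction $\nu_{\pm,k}$ at $[\mu_\pm]$ is represented on $\Sigma$ by $F^*_\pm(\nu_{\pm,k})$. Because the conjugation factor $\overline{(F_\pm)_z}/(F_\pm)_z$ occurring in $F^*_\pm$ has unit modulus, one has $|F^*_\pm(\nu)|=|\nu\circ F_\pm|$, so the Weil--Petersson pairing is preserved verbatim and no spurious powers of $|\mu_{F_\pm}|$ appear. Using $\mu_{F_-}=-\mu_{F_+}$, so that $|\mu_{F_-}|=|\mu_{F_+}|$, and noting that $e^{\phi}$ is the common induced metric on $\Sigma$, I obtain
\[
\mathrm{Mess}^*_\pm(\omega_{\mathrm{WP}})=\frac{\sqrt{-1}}{2}\int_{\Sigma}(1-|\mu_{F_+}|^2)\,e^{\phi}\sum_{k\in\mathbb{Z}}F^*_\pm(\nu_{\pm,k})\wedge F^*_\pm(\bar\nu_{\pm,k})\,d^2z.
\]
Subtracting the $+$ pullback from the $-$ pullback reproduces exactly the right-hand side of \eqref{e:symplectic-form}, which is $\omega_{\mathrm{C}}$, and this yields \eqref{e:symplectic-relation}.

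I expect the main obstacle to be the careful bookkeeping of the dual role played by $\nu_{\pm,k}$, which is simultaneously the density appearing in the integral and the tangent direction on which the wedge acts; under $F_\pm$ both must transform consistently. The delicate point is precisely the phase factor $\overline{(F_\pm)_z}/(F_\pm)_z$ in $F^*_\pm$, whose unit modulus is what prevents extraneous factors of $|\mu_{F_\pm}|^{\pm2}$: mistaking $\overline{(F_\pm)_z}$ for $\overline{(F_\pm)_{\bar z}}$ here would corrupt the pairing. One must also confirm that $d\mathrm{Mess}_\pm$ genuinely carries the basis $\{\delta_k\mu,\delta_k\Phi\}$ onto $\{\nu_{\pm,k}\}$, for which the diagonal/anti-diagonal decomposition of Proposition \ref{p:diffeo} is the right device: the diagonal directions produce the base variations $\delta_k\mu$ while the anti-diagonal ones produce the fibre variations $\delta_k\Phi$. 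Finally, all integrals converge thanks to the finiteness of the total curvature and anti-holomorphic energy in Proposition \ref{t:total-curv}, which is where the Weil--Petersson hypothesis enters, and the resulting form is independent of the chosen orthonormal bases (Remark \ref{r:independence-basis}).
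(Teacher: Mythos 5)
Your proposal is correct and follows essentially the same route as the paper: starting from Proposition \ref{p:symplectic-form}, rewriting the density via $(1-|\mu_{F_\pm}|^2)\,e^{\phi}=e^{\psi\circ F_\pm}\big(|(F_\pm)_z|^2-|(F_\pm)_{\bar z}|^2\big)$, and recognizing each summand as the change-of-variables expression for $\mathrm{Mess}^*_\pm(\omega_{\mathrm{WP}})$; your formula for the pullback is exactly the one recorded in Remark \ref{r:pullback-form}. The only cosmetic difference is the direction of the substitution (you pull $\omega_{\mathrm{WP}}$ back from $\mathbb{D}_\pm$ to $\Sigma$, the paper pushes the integral over $\Sigma$ forward to $\mathbb{D}_\pm$), which is the same computation.
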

\begin{proof}
By Proposition \ref{p:symplectic-form}, we have
\begin{equation}\label{e:w-cot}
\begin{split}
\omega_{\mathrm{C}}
=&-\frac{\sqrt{-1}}2 \int_{\mathbb{D}}(1-|\mu_F|^2) \, e^{\phi}\\
&\qquad\qquad\quad\cdot \Big[ \sum_{k\in\mathbb{Z}} F^*_+({\nu}_{+,k})\wedge F^*_+(\bar{\nu}_{+,k})
  -  F^*_-({\nu}_{-,k}) \wedge  F^*_-(\bar{\nu}_{-,k})  \Big] \, d^2 z\\
=&-\frac{\sqrt{-1}}2 \int_{\mathbb{D}}\, e^{\psi\circ F_\pm} \big(|(F_\pm)_z|^2 -|(F_\pm)_{\bar{z}}|^2\big)\\ 
&\qquad\qquad\quad  \cdot \Big[ \sum_{k\in\mathbb{Z}} F^*_+({\nu}_{+,k})\wedge F^*_+(\bar{\nu}_{+,k}) 
  -  F^*_-({\nu}_{-,k}) \wedge  F^*_-(\bar{\nu}_{-,k})  \Big]
\, d^2z\\
=&-\frac{\sqrt{-1}}2\Big[ \mathrm{Mess}^*_+\int_{\mathbb{D}} e^{\psi} \, \Big(\sum_{k\in\mathbb{Z}} \nu_{+,k} \wedge \bar{\nu}_{+,k} \Big)\, d^2z\\
  &\qquad\qquad \qquad\qquad \qquad -\mathrm{Mess}^*_-\int_{\mathbb{D}} e^{\psi} \Big(\sum_{k\in\mathbb{Z}} \nu_{-,k} \wedge \bar{\nu}_{-,k} \Big) \, d^2z \Big].
\end{split}
\end{equation}
Thus, we obtain
\begin{align*}
\omega_{\mathrm{C}}
=  -\mathrm{Mess}^*_+ (\omega_{\mathrm{WP}}) + \mathrm{Mess}^*_-(\omega_{\mathrm{WP}}).
\end{align*}
This completes the proof.
\end{proof}

\begin{remark}\label{r:independence-basis}
By Theorem \ref{t:symplectic-relation}, we can see that the canonical symplectic form $\omega_{\mathrm{C}}$ does not depend on the choice of the basis $\{\nu_{\pm,k} :\, k\in\mathbb{Z}\}$.
\end{remark}

\begin{remark}\label{r:bounded}
For any harmonic Beltrami differentials $\mu_1, \mu_2$ representing vectors in  the tangent space of $T_\nu T_0(1)\cong H^{-1,1}(\mathbb{D})$ at  $[\nu]\in T_0(1)$, we easily obtain the inequality 
$$
\big|\,\omega_{\mathrm{WP}}(\mu_1,\mu_2)\, \big|^2  \leq ||\mu_1||_{2}\cdot||\mu_2||_2 < \infty
$$
where $||\cdot||_2$ denotes the $L^2$-norm on $H^{-1,1}(\mathbb{D})$.  This observation, together with Theorem \ref{t:symplectic-relation},   implies that the canonical symplectic form $\omega_{\mathrm{C}}$ on $T^*T_0(1)$ satisfies the same boundedness  property.
\end{remark}

\begin{remark}\label{r:pullback-form}
From the proof of Theorem \ref{t:symplectic-relation}, we observe that the pullback $\mathrm{Mess}_\pm^* (\omega_{\mathrm{WP}})$ has the following expression:
\begin{equation}\label{e:symplectic-pm}
\mathrm{Mess}^*_\pm (\omega_{\mathrm{WP}})= \frac{\sqrt{-1}}{2} \int_{\mathbb{D}} (1-|\mu_F|^2)\, e^\phi \, \Big(\sum_{k\in\mathbb{Z}} F^*_\pm(\nu_{\pm,k})\wedge F^*_\pm(\bar{\nu}_{\pm,k}) \Big) \, d^2z.
\end{equation}
The closedness of these pullback 2-forms on $T^*T_0(1)$ 
follows from the commutativity of  the exterior differential $d$ with the pullback operation.  Alternatively, it can be established 
using  similar arguments as in Lemma 2.7 of \cite{Wol86} or Theorem 7.4 of \cite{TT06}, which in turn originate from \cite{Ahl61}.  A key component of  this argument is the fact that
the Lie derivative of the density $(1-|\mu_F|^2) e^\phi$ vanishes,  a result that follows from
Propositions \ref{p:var-anti-hol} and \ref{p:var-phi}.
\end{remark}

\begin{corollary}
For the symplectic diffeomorphism  $\mathrm{Mess}:T^*T_0(1)\to T_0(1)\times T_0(1)$, the following properties hold:
\begin{enumerate}
\item{The $\mathrm{Mess}$ map  sends the Lagrangian subspace given by the zero section of $T^*T_0(1)$ to the diagonal subset in $T_0(1)\times T_0(1)$,}
\item{The $\mathrm{Mess}$ map sends the Lagrangian subspace given by the fiber $T^*_0T_0(1)$ to the anti-diagonal subset in $T_0(1)\times T_0(1)$.}
\end{enumerate}
\end{corollary}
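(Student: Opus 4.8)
The plan is to identify the $\mathrm{Mess}$-images of the two tautological Lagrangian subspaces of $T^*T_0(1)$. The Lagrangian character of the images is automatic: by Theorem \ref{t:symplectic-relation} the map $\mathrm{Mess}$ is a symplectic diffeomorphism for $\omega_{\mathrm{C}}$ and $-\pi_+^*\omega_{\mathrm{WP}}+\pi_-^*\omega_{\mathrm{WP}}$, and both the zero section and the fibres are Lagrangian for the canonical form $\omega_{\mathrm{C}}$, so their images are Lagrangian for the target form. Thus the whole content is the geometric identification of the images, and I would argue each case by unwinding the explicit construction of $\mathrm{Mess}$ together with the Beltrami bookkeeping already set up for Theorem \ref{t:Mess-diff}.

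For part (1) I would start from a point $(\mu,0)$ of the zero section. Since $\Phi=0$, the second fundamental form vanishes by \eqref{e:I-II}, so the associated maximal disc is totally geodesic and \eqref{e:relation-I_pm} collapses to $I_+=I_-=I$. Solving the Gauss equation \eqref{e:Gauss} with $\Phi=0$ gives $2\phi_{z\bar z}=e^{\phi}$, so $I=e^{\phi}|dz|^2$ is exactly the hyperbolic metric of $(\mathbb{D},z)$; hence $z_\pm=z$ and $\mathrm{Mess}(\mu,0)=([\mu],[\mu])$ lies on the diagonal, as already noted in the Remark following \eqref{e:def-Mess}. The reverse inclusion is immediate, since a diagonal point $([\nu],[\nu])$ is realised by the identity minimal Lagrangian self-map, whose Hopf differential is zero, so its $\mathrm{Mess}$-preimage is $(\nu,0)$. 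Therefore $\mathrm{Mess}$ carries the zero section bijectively onto the diagonal.

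For part (2) I would take a point $(0,\Phi)$ of the fibre $T^*_0T_0(1)$, so that $\mu=0$ and the coordinate $z$ is the identity; consequently $z_\pm=F_\pm$. By Proposition \ref{p:G-F-same} the induced Gauss maps satisfy $\mu_{F_\pm}=\pm\overline{\Phi}\,e^{-\phi}$, whence $\mu_{z_-}=\mu_{F_-}=-\mu_{F_+}=-\mu_{z_+}$, and $\mathrm{Mess}(0,\Phi)$ lands on the anti-diagonal, characterised by $\mu_{z_-}=-\mu_{z_+}$. For the converse I would impose this relation in the composition law $\mu_{z_\pm}=\frac{\mu_z\pm z^*(\mu_+)}{1\pm\bar{\mu}_z\,z^*(\mu_+)}$ used in the injectivity argument of Theorem \ref{t:Mess-diff}; a short cross-multiplication reduces it to $\mu_z=\bar{\mu}_z\,(z^*(\mu_+))^2$, and taking moduli forces $\mu_z=0$ because $|z^*(\mu_+)|<1$. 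Hence the anti-diagonal is exactly the $\mathrm{Mess}$-image of the fibre over the origin.

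The two forward computations are one-line consequences of \eqref{e:I-II} and Proposition \ref{p:G-F-same}, so I expect the only genuine work to be the converse in part (2): verifying that the relation $\mu_{z_-}=-\mu_{z_+}$ between the two boundary data truly pins down $\mu_z=0$ rather than merely constraining it. This is where the argument carries content, and it runs exactly parallel to the cross-multiplication already carried out for the injectivity of $\mathrm{Mess}$ in Theorem \ref{t:Mess-diff}, so that no new estimate is required.
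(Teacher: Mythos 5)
Your proposal is correct and follows essentially the same route as the paper: for the zero section, $\Phi=0$ forces $F_\pm$ to be trivial so $z_\pm=z$, and for the fibre over the origin, $\mu=0$ forces $z_\pm=F_\pm$ with $\mu_{F_\pm}=\pm\overline{\Phi}e^{-\phi}$, which is exactly the paper's argument. The only addition is that you also verify the converse inclusions (that the diagonal and anti-diagonal are hit exactly by these Lagrangians, the latter via the cross-multiplication $\mu_z=\bar{\mu}_z(z^*(\mu_+))^2$ forcing $\mu_z=0$), a point the paper leaves implicit; this is a correct and welcome sharpening but not a different method.
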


\begin{proof}
The first claim follows directly from Remark \ref{r:totally-geodesic}.
For the second claim concerning the Lagrangian subspace of the fiber $T_0^*T_0(1)$,  we observe that the disc $\mathbb{D}_z$ coincides with $\mathbb{D}_w$,  meaning that  $z_\pm=F_\pm$.
Consequently, the Beltrami differentials associated with $z_\pm$ are identical to the Beltrami differential $\mu_{F_\pm}$.  Furthermore,  we have $\mu_{F_\pm} = \pm \overline{\Phi} e^{-\phi}$.
Thus, $\mathrm{Mess}$ maps the Lagrangian subspace corresponding to the fiber of $T_0^*T_0(1)$ to the anti-diagonal subset in $T_0(1)\times T_0(1)$.
\end{proof}

\section{Variation of anti-holomorphic energy}\label{s:anti-holomorphic}

In this section, we prove that the anti-holomorphic energy functional $E$
serves as a  K\"ahler potential function for  the canonical symplectic form $\omega_C$ when restricted to the submanifolds $T_0(1)^\pm$
of $T^*T_0(1)$.  
Although $T_0(1)^\pm$  are real symplectic submanifolds of the complex manifold $T^*T_0(1)$, 
they are endowed with a complex structure by identifying them with $T_0(1)$.
This identification allows us to interpret $E$ as a K\"ahler potential with respect to the
induced K\"ahler structure on $T_0(1)^\pm$.

Throughout the remainder of this section, we slightly abuse notation by writing
$F=F_\pm$ whenever no confusion arises.
We begin by considering variations of the holomorphic and anti-holomorphic energy densities.

\begin{proposition}\label{p:var-anti-hol}
For a family of anti-holomorphic energy densities $e^{\psi^\epsilon\circ F^\epsilon}|F^\epsilon_{\bar{z}}|^2$ on $\mathbb{D}^\epsilon$ satisfying the commutative diagram \eqref{e:CD-1},
\begin{equation}\label{e:var-phi-0}
L_{{\nu}_f}\big(\,e^{\psi\circ F}|F_{\bar{z}}|^2\, \big) = \Phi \big(F^*({{\nu_h}})-{\nu_f}\big) +\overline{\Phi} \big( F^*(\bar{{\nu}}_h)-\bar{{\nu}}_f\big).
\end{equation}
\end{proposition}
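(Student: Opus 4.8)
The plan is to avoid a direct Wirtinger-derivative computation and instead reduce the statement to the two variational formulae already in hand. The key observation I would record first is that the anti-holomorphic energy density factors through the Hopf differential and the Beltrami coefficient of $F$. Writing $\mu_F=F_{\bar z}/F_z$ and recalling $\Phi=e^{\psi\circ F}F_z\overline F_z=e^{\phi}\bar\mu_F$, a purely algebraic check using $\overline{F_{\bar z}}=\overline F_z$ gives
\[
e^{\psi\circ F}|F_{\bar z}|^2=\mu_F\,\Phi=|\mu_F|^2e^{\phi}.
\]

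Next I would note that the Lie derivative $L_{\nu_f}$ of the excerpt obeys a Leibniz rule across tensor types. Indeed the pullback $(f^\epsilon)^*$ is multiplicative: for factors of type $(\ell_1,m_1)$ and $(\ell_2,m_2)$ the Jacobian weights $(f^\epsilon_z)^{\ell}(\overline{f^\epsilon}_{\bar z})^{m}$ merely add exponents, so $(f^\epsilon)^*(\mu_F^\epsilon\Phi^\epsilon)=(f^\epsilon)^*(\mu_F^\epsilon)\,(f^\epsilon)^*(\Phi^\epsilon)$. Here $\mu_F$ has type $(-1,1)$ and $\Phi$ has type $(2,0)$, so the product has type $(1,1)$, which is precisely the type of the anti-holomorphic energy density. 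Differentiating at $\epsilon=0$ and using $(f^0)^*=\mathrm{id}$ yields
\[
L_{\nu_f}\bigl(e^{\psi\circ F}|F_{\bar z}|^2\bigr)=L_{\nu_f}(\mu_F\Phi)=(L_{\nu_f}\mu_F)\,\Phi+\mu_F\,L_{\nu_f}\Phi .
\]

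I would then substitute \eqref{e:var-nu-F} for $L_{\nu_f}\mu_F$ and \eqref{e:var-Hopf} for $L_{\nu_f}\Phi$; the $\mu_F^{-1}$ occurring in the first form of \eqref{e:var-Hopf} is harmless since it is multiplied by $\mu_F$, and one may equivalently use the non-singular second form. Collecting in the frame $\{\Phi,\overline\Phi\}$, the coefficient of $\Phi$ is $(1-|\mu_F|^2)F^*(\nu_h)-\nu_f+\bar\nu_f\mu_F^2$ coming from $(L_{\nu_f}\mu_F)\Phi$, together with $|\mu_F|^2F^*(\nu_h)-\bar\nu_f\mu_F^2$ coming from $\mu_F L_{\nu_f}\Phi$, which sum to $F^*(\nu_h)-\nu_f$ after the $|\mu_F|^2$- and $\mu_F^2$-terms cancel in pairs; the coefficient of $\overline\Phi$ collapses to $F^*(\bar\nu_h)-\bar\nu_f$. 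This is exactly \eqref{e:var-phi-0}.

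Once the reduction $e^{\psi\circ F}|F_{\bar z}|^2=\mu_F\Phi$ is in place the remaining work is routine algebra, so the only genuinely delicate point—the main obstacle—is justifying the Leibniz rule for this specific pullback convention and keeping the tensor types honest, i.e. treating $\mu_F$ in $\mu_F\Phi$ as a bona fide $(-1,1)$-tensor rather than a scalar. If one instead wishes to avoid the product rule, the alternative is to repeat the direct argument of Proposition \ref{p:var-Hopf} verbatim, replacing $F_z$ by $F_{\bar z}$ and the $(2,0)$-weight $(f^\epsilon_z)^2$ by the $(1,1)$-weight $f^\epsilon_z\,\overline{f^\epsilon}_{\bar z}$; in that route the bookkeeping of the mixed derivatives $H^\epsilon_z,\overline{H^\epsilon}_z$ through the chain-rule identity \eqref{e:var-Hopf-2}, followed by the cancellation of the metric-variation terms via Ahlfors' lemma \eqref{e:Ahl}, is the step demanding care.
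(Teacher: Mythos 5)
Your proof is correct, but it follows a genuinely different route from the paper's. The paper proves \eqref{e:var-phi-0} by a direct computation from the definition of the Lie derivative: it expands $\frac{\partial}{\partial\epsilon}\big|_{\epsilon=0}$ of the pulled-back family, uses the chain-rule identities for $(\dot h\circ F)_{\bar z}$ and $(\dot{\bar h}\circ F)_z$ coming from $h^\epsilon\circ F=F^\epsilon\circ f^\epsilon$ (the analogue of \eqref{e:var-phi-2}), and cancels the metric-variation terms with Ahlfors' lemma \eqref{e:Ahl} --- i.e.\ it repeats the proof of Proposition \ref{p:var-Hopf} with the $(2,0)$-weight replaced by the $(1,1)$-weight, exactly as you describe in your "alternative" closing remark. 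You instead observe the factorization $e^{\psi\circ F}|F_{\bar z}|^2=\mu_F\,\Phi$, note that the pullback convention $(f^\epsilon)^*(\omega^\epsilon)=\omega^\epsilon\circ f^\epsilon\,(f^\epsilon_z)^{\ell}(\overline{f^\epsilon}_{\bar z})^{m}$ is multiplicative in the tensor types $(-1,1)+(2,0)=(1,1)$ so that $L_{\nu_f}$ obeys a Leibniz rule (using $(f^0)^*=\mathrm{id}$), and then substitute the already-proved formulae \eqref{e:var-nu-F} and \eqref{e:var-Hopf}; I have checked the resulting cancellation of the $|\mu_F|^2$- and $\mu_F^2$-terms and the coefficients of $\Phi$ and $\overline\Phi$ come out exactly as claimed. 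Your route is shorter, reuses Propositions \ref{p:var-nu-F} and \ref{p:var-Hopf} instead of redoing the Ahlfors-lemma cancellation, and makes transparent why the right-hand side has the clean form $\Phi(F^*(\nu_h)-\nu_f)+\overline\Phi(F^*(\bar\nu_h)-\bar\nu_f)$; the paper's route is self-contained at this point and runs in parallel with the proofs of Propositions \ref{p:var-Hopf} and \ref{p:var-phi}, which is presumably why the author chose it. The one point you rightly flag as needing care --- treating $\mu_F$ as a genuine $(-1,1)$-tensor in the product so that the Jacobian weights combine correctly --- is indeed the only non-routine step, and your justification of it is adequate.
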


\begin{proof}
From the definition of Lie derivative, we have
\begin{equation}\label{e:var-phi-1}
L_{{\nu}}\,\big(\,e^{\psi\circ F}|F_{\bar{z}}|^2\, \big)  =\frac{\partial}{\partial \epsilon}\Big|_{\epsilon=0} \Big( e^{\psi^\epsilon\circ F^\epsilon\circ f^\epsilon} F^\epsilon_{\bar{z}}\circ f^\epsilon\, \overline{F}_{{z}}^{\epsilon}\circ f^{\epsilon}\, f^{\epsilon}_z \bar{f}^\epsilon_{\bar{z}} \Big).
\end{equation}
From $h^\epsilon \circ F= F^{\epsilon}\circ f^{\epsilon}$, we obtain
\begin{equation}\label{e:var-phi-2}
\frac{\partial}{\partial\epsilon}\Big|_{\epsilon=0} \Big(F^\epsilon_{\bar{z}}\circ f^{\epsilon} \bar{f}^\epsilon_{\bar{z}}\Big) =(\dot{h}\circ F)_{\bar{z}}- F_{{z}} \dot{{f}}_{\bar{z}}, \qquad \frac{\partial}{\partial\epsilon}\Big|_{\epsilon=0} \Big(\overline{F}^\epsilon_{{z}}\circ f^{\epsilon} {f}^\epsilon_{{z}}\Big) =(\dot{\bar{h}}\circ F)_{{z}}- \overline{F}_{\bar{z}} \dot{\bar{f}}_{{z}}
\end{equation}
By combining  these expressions with the previous equation, we get
\begin{align*}
L_{{\nu_f}}\, \big(\,e^{\psi\circ F}|F_{\bar{z}}|^2\, \big) =&\ e^{\psi\circ F} \Big( \dot{\psi}+\psi_u\dot{h}+\psi_{\bar{u}}\dot{\bar{h}}\Big)\circ F F_{\bar{z}} \overline{F}_{{z}} \\
&+ e^{\psi\circ F} \Big( (\dot{h}\circ F)_{\bar{z}} - F_{{z}} \dot{{f}}_{\bar{z}}\Big)\overline{F}_{{z}}
+  e^{\psi\circ F} F_{\bar{z}} \Big( (\dot{\bar{h}}\circ F)_{{z}} - \overline{F}_{\bar{z}} \dot{\bar{f}}_{{z}}\Big)\\
=&\ e^{\psi\circ F} \Big( \dot{\psi}+\psi_u\dot{h}+\psi_{\bar{u}}\dot{\bar{h}}+ \dot{h}_z + \dot{\bar{h}}_{\bar{z}} \Big)\circ F F_{\bar{z}} \overline{F}_{{z}}\\
&+e^{\psi\circ F} \Big( \dot{h}_{\bar{z}}\circ F \overline{F}_{\bar{z}} \overline{F}_{{z}} - \dot{{f}}_{\bar{z}}  F_{{z}} \overline{F}_{{z}} 
     + \dot{\bar{h}}_{{z}}\circ F {F}_{{z}} {F}_{\bar{z}} - \dot{\bar{f}}_{{z}}  F_{\bar{z}} \overline{F}_{\bar{z}} \Big)
\end{align*}
Using the equality \eqref{e:Ahl} for a variation of  hyperbolic metrics, we conclude
\begin{align*}
L_{{\nu_f}}\, \big(\,e^{\psi\circ F}|F_{\bar{z}}|^2\, \big)
=&\ e^{\psi\circ F} F_z\overline{F}_z \Big( \dot{h}_{\bar{z}}\circ F \overline{F}_{\bar{z}} F_z^{-1} -\dot{f}_{\bar{z}}\Big) +
e^{\psi\circ F} F_{\bar{z}}\overline{F}_{\bar{z}} \Big( \dot{\bar{h}}_z\circ F F_z \overline{F}_{\bar{z}}^{-1} -\dot{\bar{f}}_z\Big)\\
=&\ \Phi \Big( F^*({\nu}_h) -{\nu}_f \Big)+ \overline{\Phi} \Big( F^*({\bar{\nu}}_h) - {\bar{\nu}_f} \Big).
\end{align*}
This completes the proof.
\end{proof}

In the same way as above, we can prove the following proposition.

\begin{proposition}\label{p:var-phi}
For a family of holomorphic energy densities $e^{\phi^\epsilon}= e^{\psi^\epsilon\circ F^\epsilon}|F^\epsilon_z|^2$ on $\mathbb{D}^{\epsilon}$ satisfying the commutative diagram \eqref{e:CD-1}, the following equality holds:
\begin{equation}\label{e:var-phi}
L_{{\nu}_f}\, e^{\phi} = \Phi \big(F^*({{\nu_h}})-{\nu_f}\big) +\overline{\Phi} \big( F^*(\bar{{\nu}}_h)-\bar{{\nu}}_f\big).
\end{equation}
\end{proposition}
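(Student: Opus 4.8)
The plan is to repeat the proof of Proposition~\ref{p:var-anti-hol} verbatim, with the anti-holomorphic density $e^{\psi\circ F}|F_{\bar z}|^2$ replaced by the holomorphic density $e^{\phi}=e^{\psi\circ F}|F_z|^2=e^{\psi\circ F}F_z\overline{F}_{\bar z}$, which is again a tensor of type $(1,1)$. First I would write the Lie derivative as
\[
L_{\nu_f}\,e^{\phi}=\frac{\partial}{\partial\epsilon}\Big|_{\epsilon=0}\Big(e^{\psi^\epsilon\circ F^\epsilon\circ f^\epsilon}\,(F^\epsilon_z\circ f^\epsilon)\,(\overline{F}^\epsilon_{\bar z}\circ f^\epsilon)\,f^\epsilon_z\,\bar f^\epsilon_{\bar z}\Big),
\]
and split the Jacobian factors so as to form $A^\epsilon=(F^\epsilon_z\circ f^\epsilon)f^\epsilon_z$ and $B^\epsilon=(\overline{F}^\epsilon_{\bar z}\circ f^\epsilon)\bar f^\epsilon_{\bar z}=\overline{A^\epsilon}$, in parallel with the grouping used in \eqref{e:var-phi-2}.

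Differentiating $A^\epsilon$ through the commutative diagram~\eqref{e:CD-1} gives precisely \eqref{e:var-Hopf-2}, namely $\dot A=(\dot h\circ F)_z-F_{\bar z}\dot{\bar f}_z$, and conjugation yields $\dot B=(\dot{\bar h}\circ F)_{\bar z}-\overline{F}_z\dot f_{\bar z}$; the derivative of the metric prefactor contributes $e^{\psi\circ F}(\dot\psi+\psi_u\dot h+\psi_{\bar u}\dot{\bar h})\circ F$, since $\tfrac{d}{d\epsilon}\big|_0(F^\epsilon\circ f^\epsilon)=\dot h\circ F$. I would then apply the product rule, expand $(\dot h\circ F)_z$ and $(\dot{\bar h}\circ F)_{\bar z}$ by the chain rule, and substitute $\dot h_{\bar u}=\nu_h$, $\dot f_{\bar z}=\nu_f$ from \eqref{e:der-Bel}. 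Collecting every term carrying the density factor $F_z\overline{F}_{\bar z}$ reproduces the left-hand side of Ahlfors' lemma~\eqref{e:Ahl}, which vanishes.

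What is left are four terms, and the decisive observation is that they coincide, one for one, with the surviving terms in the proof of Proposition~\ref{p:var-anti-hol}. Regrouping them as there, and using $\Phi=e^{\psi\circ F}F_z\overline{F}_z$, $\overline{\Phi}=e^{\psi\circ F}F_{\bar z}\overline{F}_{\bar z}$ together with the pullback $F^*(\nu_h)=\nu_h\circ F\,\overline{F_z}/F_z$ of \eqref{e:composition}, they factor as $\Phi\big(F^*(\nu_h)-\nu_f\big)+\overline{\Phi}\big(F^*(\bar\nu_h)-\bar\nu_f\big)$, which is \eqref{e:var-phi}. I do not expect a genuine obstacle: the single delicate point is the bookkeeping of the holomorphic and anti-holomorphic indices together with the conjugation conventions $\overline{F}_z=\overline{F_{\bar z}}$ and $\overline{F}_{\bar z}=\overline{F_z}$. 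The noteworthy upshot is that $L_{\nu_f}e^{\phi}$ equals the anti-holomorphic variation of Proposition~\ref{p:var-anti-hol}; equivalently $L_{\nu_f}\big((1-|\mu_F|^2)e^{\phi}\big)=0$, which is exactly the vanishing invoked in Remark~\ref{r:pullback-form}.
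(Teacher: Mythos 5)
Your proposal is correct and is exactly the route the paper takes: the paper proves Proposition \ref{p:var-phi} by the remark that it follows ``in the same way'' as Proposition \ref{p:var-anti-hol}, and your computation spells out precisely that adaptation, with the correct tensor grouping, the cancellation via Ahlfors' lemma \eqref{e:Ahl}, and the regrouping of the four surviving terms into $\Phi\big(F^*(\nu_h)-\nu_f\big)+\overline{\Phi}\big(F^*(\bar{\nu}_h)-\bar{\nu}_f\big)$. Your closing observation that the two propositions together give $L_{\nu_f}\big((1-|\mu_F|^2)e^{\phi}\big)=0$ is also exactly the fact the paper invokes in Remark \ref{r:pullback-form}.
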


 For the following two propositions, we assume that either $F^*_+(\nu_+)$ or $F^*_-(\nu_-)$ vanishes, holding 
respectively over $T_0(1)^-$ and $T_0(1)^+$.

\begin{proposition}\label{p:hol-var}  Under the condition $F_+^*(\nu_+)=0$ or $F_-^*(\nu_-)=0$, we have
\begin{equation}
\frac12\big(L_\nu -i L_{i\nu}\big) \big( e^{\psi\circ F}|F_{\bar{z}}|^2\big)= e^\phi \bar{\mu}_F \nu= \Phi \, \nu
\end{equation}
\end{proposition}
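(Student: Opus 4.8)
The plan is to read off the type $(1,0)$ part of the Lie derivative already computed in Proposition~\ref{p:var-anti-hol}. The first observation is that $L_{\nu_f}$ depends real-linearly on the Beltrami direction $\nu_f=\nu$, while complex conjugation interchanges $\nu$ and $\bar\nu$; hence replacing $\nu$ by $i\nu$ multiplies every term linear in $\nu$ by $i$ and every term linear in $\bar\nu$ by $-i$. Consequently the operator $\tfrac12(L_\nu-iL_{i\nu})$ annihilates the $\bar\nu$-part and returns exactly the $\nu$-linear part of $L_\nu$, so the whole task reduces to isolating the coefficient of $\nu$ on the right-hand side of \eqref{e:var-phi-0}.

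Next I would substitute $\Phi=e^{\phi}\bar\mu_F$ into \eqref{e:var-phi-0}, obtaining
\[
L_{\nu}\big(e^{\psi\circ F}|F_{\bar z}|^2\big)=e^{\phi}\bar\mu_F\big(F^*(\nu_h)-\nu\big)+e^{\phi}\mu_F\big(F^*(\bar\nu_h)-\bar\nu\big),
\]
and then express $F^*(\nu_h)$ through the deformation of $\Sigma$. This is where the hypothesis enters: taking $F=F_+$ with $F_-^*(\nu_-)=0$ (or symmetrically $F=F_-$ with $F_+^*(\nu_+)=0$, in which case $\mu_F=\mu_{F_-}$ but $\mu_F^2$ and $|\mu_F|$ are unchanged), the relation \eqref{e:var-nu-f-1} collapses to
\[
F^*(\nu_h)=\frac{2\big(\nu-\bar\nu\,\mu_F^2\big)}{1-|\mu_F|^2},
\]
which displays both the $\nu$- and the $\bar\nu$-components of $F^*(\nu_h)$ (and, by conjugation, those of $F^*(\bar\nu_h)$).

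Finally I would collect the $\nu$-linear contributions. The $\nu$-part of $\bar\mu_F\big(F^*(\nu_h)-\nu\big)$ is $\bar\mu_F\,\dfrac{1+|\mu_F|^2}{1-|\mu_F|^2}\,\nu$, while the $\nu$-part of $\mu_F\big(F^*(\bar\nu_h)-\bar\nu\big)$ is $-\dfrac{2|\mu_F|^2\,\bar\mu_F}{1-|\mu_F|^2}\,\nu$; summing these and multiplying by $e^{\phi}$, the numerator becomes $\bar\mu_F\big(1+|\mu_F|^2-2|\mu_F|^2\big)\nu=\bar\mu_F\big(1-|\mu_F|^2\big)\nu$, the factor $1-|\mu_F|^2$ cancels, and one is left with $e^{\phi}\bar\mu_F\nu$, as claimed. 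I expect the only genuine obstacle to be the bookkeeping of holomorphic versus antiholomorphic components through \eqref{e:var-nu-f-1}: one must be sure that under the stated hypothesis $F^*(\nu_h)$ is truly a real-linear function of $\nu$ alone, and then verify the exact cancellation of the $\bar\nu$-terms together with the clean factoring of $1-|\mu_F|^2$.
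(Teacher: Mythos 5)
Your proposal is correct and follows essentially the same route as the paper: both start from \eqref{e:var-phi-0}, use \eqref{e:var-nu-f-1} under the hypothesis to express $F^*(\nu_h)$ as $2(1-|\mu_F|^2)^{-1}(\nu-\bar\nu\mu_F^2)$, and substitute $\Phi=e^\phi\bar\mu_F$. The only cosmetic difference is that you isolate the $\nu$-linear part first via $\tfrac12(L_\nu-iL_{i\nu})$, whereas the paper computes the full Lie derivative $e^\phi(\nu\bar\mu_F+\bar\nu\mu_F)$ and applies that operator at the end; the cancellation of the factor $1-|\mu_F|^2$ works out exactly as you describe.
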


\begin{proof}
We will prove the statement for the case $F_-^*(\nu_-)=0$ as the proof for the other case follows similarly.
From \eqref{e:var-phi-0}, we obtain
\begin{equation}\label{e:var-phi+}
L_{{\nu}} \big(e^{\psi\circ F}|F_{\bar{z}}|^2\big) = \Phi \big(F^*({{\nu_h}})-{\nu}\big)+\overline{\Phi} \big(F^*({\bar{\nu}_h})-\bar{\nu}\big) .
\end{equation}
By \eqref{e:var-nu-f-1} and the condition $F_-^*(\nu_-)=0$, we have
\begin{equation}\label{e:var-nu+}
F^*_+(\nu_+)=\, 2(1-|\mu_F|^2)^{-1}\big(\nu-\bar{\nu}\mu_F^2\big).
\end{equation}
Substituting this into the previous equation, we get
\begin{align*}
&L_{\nu} \big( e^{\psi\circ F}|F_{\bar{z}}|^2 \big)\\
=& (1-|\mu_F|^2)^{-1}\Big(\Phi \big(2\nu-2\bar{\nu}\mu_F^2 -\nu+\nu|\mu_F|^2\big)+\overline{\Phi}  \big(2\bar{\nu}-2{\nu}\bar{\mu}_F^2 -\bar{\nu}+\bar{\nu}|\mu_F|^2\big)\Big) \\
=& e^{\phi}   (1-|\mu_F|^2)^{-1} \Big( \nu\bar{\mu}_F+\nu\bar{\mu}_F |\mu_F|^2-2\bar{\nu}{\mu}_F|\mu_F|^2
 +\bar{\nu}{\mu}_F+\bar{\nu}{\mu}_F |\mu_F|^2-2{\nu}\bar{\mu}_F|\mu_F|^2\Big)\\
=& e^{\phi}   (1-|\mu_F|^2)^{-1} \Big(  \nu\bar{\mu}_F-\bar{\nu}{\mu}_F|\mu_F|^2
 +\bar{\nu}{\mu}_F-{\nu}\bar{\mu}_F|\mu_F|^2 \Big)\\
=& e^{\phi} \Big( \nu\bar{\mu}_F +\bar{\nu}{\mu}_F\Big).
\end{align*}
Thus, we obtain
\begin{align*}
\frac12\big(L_\nu -i L_{i\nu}\big)  \big( e^{\psi\circ F}|F_{\bar{z}}|^2 \big)
=\frac12 e^\phi  \Big( \nu\bar{\mu}_F +\bar{\nu}{\mu}_F +\nu\bar{\mu}_F -\bar{\nu}{\mu}_F\Big)
= e^\phi \nu\bar{\mu}_F.
\end{align*}
This completes the proof.

\end{proof}

\begin{proposition}\label{p:antihol-var}
Under the condition  $F_+^*(\nu_+)=0$ or $F_-^*(\nu_-)=0$, we have
\begin{equation}
\frac12\big(L_\mu+iL_{i\mu}\big) \big(e^\phi \bar{\mu}_F\, \nu\big) =e^{\phi}(1+|\mu_F|^2)\nu\bar{\mu}.
\end{equation}
\end{proposition}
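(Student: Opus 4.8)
The plan is to obtain the anti-holomorphic (Wirtinger) part $\frac12(L_\mu+iL_{i\mu})$ by differentiating the two base-point-dependent factors $e^\phi$ and $\bar\mu_F$ separately and then recombining through the Leibniz rule, treating the harmonic Beltrami differential $\nu$ as an inert factor. Here $\nu$ carries the direction produced by the first variation in Proposition \ref{p:hol-var} and plays the role of a vector slot to be wedged in the symplectic form (cf.\ the convention after \eqref{e:symplectic-def}), so it is not subject to the Lie derivative. I would treat the case $F_-^*(\nu_-)=0$ in detail, with $F=F_+$ and $\mu_F=\mu_{F_+}$; the case $F_+^*(\nu_+)=0$ is identical after exchanging $F_+$ and $F_-$, and since only $|\mu_F|^2$ survives in the final answer the two cases yield the same right-hand side.

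First I would vary the Beltrami factor. Applying Proposition \ref{p:var-nu-F} to $F=F_+$ gives $L_\mu\mu_F=(1-|\mu_F|^2)F_+^*(\nu_+)-(\mu-\bar\mu\mu_F^2)$, and inserting the constraint $F_-^*(\nu_-)=0$ into \eqref{e:var-nu-f-1} yields $(1-|\mu_F|^2)F_+^*(\nu_+)=2(\mu-\bar\mu\mu_F^2)$; substituting produces the clean expression $L_\mu\mu_F=\mu-\bar\mu\mu_F^2$. Replacing the direction $\mu$ by $i\mu$, so that $\bar\mu\mapsto -i\bar\mu$ while $\mu_F$ is unchanged, gives $L_{i\mu}\mu_F=i(\mu+\bar\mu\mu_F^2)$. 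Forming the Wirtinger combinations then isolates $\frac12(L_\mu-iL_{i\mu})\mu_F=\mu$ and $\frac12(L_\mu+iL_{i\mu})\mu_F=-\bar\mu\mu_F^2$, and the conjugation rule $\frac12(L_\mu+iL_{i\mu})\bar\mu_F=\overline{\frac12(L_\mu-iL_{i\mu})\mu_F}$ (a reality property of the pullback used throughout these computations) delivers the value I need, $\frac12(L_\mu+iL_{i\mu})\bar\mu_F=\bar\mu$.

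Next I would vary the density factor. The variation formula of Proposition \ref{p:var-phi} for $e^\phi$ is identical to the one already exploited in the proof of Proposition \ref{p:hol-var}; feeding in the same substitution $(1-|\mu_F|^2)F_+^*(\nu_+)=2(\mu-\bar\mu\mu_F^2)$ together with $\Phi=e^\phi\bar\mu_F$ gives $L_\mu e^\phi=e^\phi(\mu\bar\mu_F+\bar\mu\mu_F)$, and the same $\mu\mapsto i\mu$ replacement gives $L_{i\mu}e^\phi=ie^\phi(\mu\bar\mu_F-\bar\mu\mu_F)$. Combining, $\frac12(L_\mu+iL_{i\mu})e^\phi=e^\phi\bar\mu\mu_F$.

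Finally I would assemble the two pieces by the Leibniz rule for $\frac12(L_\mu+iL_{i\mu})$, using $\mu_F\bar\mu_F=|\mu_F|^2$:
\[
\frac12(L_\mu+iL_{i\mu})(e^\phi\nu\bar\mu_F)=\big(\tfrac12(L_\mu+iL_{i\mu})e^\phi\big)\,\nu\bar\mu_F+e^\phi\nu\,\big(\tfrac12(L_\mu+iL_{i\mu})\bar\mu_F\big)=e^\phi|\mu_F|^2\nu\bar\mu+e^\phi\nu\bar\mu=e^\phi(1+|\mu_F|^2)\nu\bar\mu.
\]
I expect the main obstacle to be the bookkeeping in the first two steps: securing the crucial cancellation that collapses the general formula of Proposition \ref{p:var-nu-F} to $L_\mu\mu_F=\mu-\bar\mu\mu_F^2$, and then correctly tracking how the anti-holomorphic projection $\frac12(L_\mu+iL_{i\mu})$ annihilates the holomorphic part $\mu$ while retaining $-\bar\mu\mu_F^2$, with the ensuing conjugation restoring the clean value $\bar\mu$ for $\bar\mu_F$. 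Once these signs are pinned down, the remaining combination is routine.
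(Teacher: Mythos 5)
Your proof is correct and is essentially the same first-order variational computation as the paper's: both hinge on the substitution $(1-|\mu_F|^2)F_+^*(\nu_+)=2(\mu-\bar{\mu}\mu_F^2)$ coming from \eqref{e:var-nu-f-1} under the constraint $F_-^*(\nu_-)=0$, and both then reduce to the same algebra in $\mu$, $\bar{\mu}$, $\mu_F$. The only organizational difference is that you differentiate the factors $e^{\phi}$ and $\bar{\mu}_F$ separately via Propositions \ref{p:var-phi} and \ref{p:var-nu-F} and recombine with the Leibniz rule (which is legitimate, since \eqref{e:var-Hopf} is itself the Leibniz combination of those two formulas), whereas the paper writes $e^{\phi}\nu\bar{\mu}_F=\Phi\nu$, invokes $L_\mu\nu=0$, and applies the packaged Hopf-differential variation \eqref{e:var-Hopf} in one step.
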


\begin{proof}
We will prove the statement for the case $F_-^*(\nu_-)=0$ as the proof for the other case follows similarly.
From \eqref{e:var-Hopf} and $L_\mu\nu=0$ (see (2.3) of \cite{TZCMP}), we obtain
\begin{align*}
&L_\mu\big( e^\phi\bar{\mu}_F\, \nu\big)= L_\mu\big(\Phi \nu\big)\\ 
=&\, e^\phi  \big(F^*({{\mu_h}})\bar{\mu}_F^2-{\bar{\mu}}|\mu_F|^2\big) + e^\phi \big( F^*(\bar{{\mu}}_h)-\bar{{\mu}} \big)\,\nu\\
=&\, e^\phi (1-|\mu_F|^2)^{-1} \Big( 2\mu\bar{\mu}_F^2 -2\bar{\mu}|\mu_F|^4 -\bar{\mu}|\mu_F|^2+\bar{\mu}|\mu_F|^4 \Big)\nu\\
&+e^\phi (1-|\mu_F|^2)^{-1} \Big( 2\bar{\mu} -2{\mu}\bar{\mu}_F^2 -\bar{\mu}+\bar{\mu}|\mu_F|^2 \Big)\nu\\
=& e^\phi(1-|\mu_F|^2)^{-1} \bar{\mu}(1-|\mu_F|^4) \nu =e^\phi(1+|\mu_F|^2) \bar{\mu} \nu .
\end{align*}
Hence, we obtain
\begin{equation*}
\frac12\big(L_\mu+iL_{i\mu}\big) \big(e^\phi\bar{\mu}_F\, \nu \big)=e^\phi(1+|\mu_F|^2) \nu \bar{\mu}.
\end{equation*} 
This completes the proof.
\end{proof}

\begin{remark}\label{r:subspace-pm}   
A point in the space $T_0(1)\times\{0\}$ or $\{0\}\times T_0(1)$ corresponds to a maximal conformal embedding
$\sigma:\mathbb{D}_z \to \mathbb{A}\mathrm{d}\mathbb{S}^{2,1}$ such that one of the target discs of $F_\pm$ is fixed to be the origin, that is, $\mathbb{D}_w$. Equivalently,  this means that one of $\mathbb{D}_{z_\pm}$ coincides with $\mathbb{D}_w$ in the commutative diagram \eqref{e:CD1}.  
 Hence, using \eqref{e:composition},  we can derive the following identities along $T_0(1)^{\pm}$:
\begin{equation}
\mu_{z_\pm}= \frac{2\mu_z}{1+|\mu_z|^2}, \qquad \mu_{F_\pm}= - (z^{-1})^*(\mu_z)
\end{equation}
where $\mu_{z_\pm}$, $\mu_z$ denotes the Beltrami differential of the maps $z_\pm$ and $z$ from $\mathbb{D}_w$.    

\end{remark}

\begin{remark}\label{r:Kahler submanifold}
By construction, the submanifolds $T_0(1)^\pm\subset T^*T_0(1)$ consist of pairs $(\mu,\Phi)\in T^*T_0(1)$ where $\Phi$ is the Hopf differential of the harmonic map $F_\pm$ from the disc $\mathbb{D}_z$, determined $\mu$, to a fixed target disc $\mathbb{D}_{z_\mp}=\mathbb{D}_w$, as noted in Remark \ref{r:subspace-pm}.
This geometric setup implies that as $\mu$ varies in $T_0(1)$, the corresponding $\Phi$ depend only on $\mu$, thereby defining a section of the cotangent bundle $T^*T_0(1)$. The images of these sections are precisely the submanifolds $T_0(1)^\pm$.  

However, such a section is not a holomorphic section,  as the anti-holomorphic derivative of $\Phi$ does not vanish. In fact,  one can easily verify that the image of a holomorphic section of $T^*T_0(1)$ is necessarily a Lagrangian submanifold with respect to $\omega_{\mathrm{C}}$.
In contrast,  $T_0(1)^\pm$ are real symplectic submanifolds of $T^*T_0(1)$, equipped with the restriction of $\omega_{\mathrm{C}}$. 

Beyond the case discussed in \cite{TT06} and referenced in the Introduction,  a similar phenomenon occurs
in the finite dimensional Teichm\"uller space setting, as explored in
\cite{TZ88b} and \cite{TT03}.  In those works, the corresponding submanifolds of the cotangent bundle arises as the differences between the Fuchsian projective structure
and either the Schottky or Bers projective structures. 

\end{remark}

 Finally,  we show that the anti-holomorphic energy functional of the harmonic map $F_\pm$ 
is a K\"ahler potential function of the canonical symplectic form $\omega_{\mathrm{C}}$ over $T_0(1)^\pm \subset T^*T_0(1)$:

\begin{theorem}\label{t:K-potential} 
Over the subspace $T_0(1)^\pm$ of $T^*T_0(1)$,
\begin{equation}
\partial\bar{\partial}\, \big(2E\big)=\partial\bar{\partial} \Big( 2 \int_{\mathbb{D}} e^{\psi\circ F}|F_{\bar{z}}|^2 \, d^2z \Big) = \mp \sqrt{-1}\, i^*_\pm \omega_{\mathrm{C}},
\end{equation}
where $i_\pm :T_0(1)^\pm\to T^*T_0(1)$ denotes the embedding map of $T_0(1)^\pm$, respectively.
\end{theorem}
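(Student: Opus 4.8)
The plan is to realize $2\int_\Sigma e^{\psi\circ F}|F_{\bar z}|^2\,d^2z$ as a K\"ahler potential by composing the two variational formulas already at hand and then matching the resulting $(1,1)$-form against the pullback of $\omega_{\mathrm{WP}}$ computed in Remark \ref{r:pullback-form}. First I would record the reduction coming from the symplectic structure: over $T_0(1)^+$ the factor $\mathrm{Mess}_-$ is constant, so $i_+^*\mathrm{Mess}_-^*(\omega_{\mathrm{WP}})=0$, and Theorem \ref{t:symplectic-relation} gives $i_+^*\omega_{\mathrm{C}}=-i_+^*\mathrm{Mess}_+^*(\omega_{\mathrm{WP}})$; symmetrically $i_-^*\omega_{\mathrm{C}}=i_-^*\mathrm{Mess}_-^*(\omega_{\mathrm{WP}})$. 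In both cases the claim collapses to the single identity $\partial\bar\partial\mathcal{E}=\sqrt{-1}\,i_\pm^*\mathrm{Mess}_\pm^*(\omega_{\mathrm{WP}})$, where $\mathcal{E}=2\int_\Sigma e^{\psi\circ F}|F_{\bar z}|^2\,d^2z$. I would also note that the tangency condition defining $T_0(1)^\pm$ is exactly $F_\mp^*(\nu_\mp)=0$, which is precisely the hypothesis of Propositions \ref{p:hol-var} and \ref{p:antihol-var}, and that by Proposition \ref{t:total-curv} the value of $\mathcal{E}$ is insensitive to the choice $F=F_+$ or $F=F_-$.

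Next I would carry out the two differentiations. Identifying the holomorphic and anti-holomorphic derivatives along $\nu$ with the operators $\tfrac12(L_\nu-\sqrt{-1}\,L_{\sqrt{-1}\nu})$ and $\tfrac12(L_\nu+\sqrt{-1}\,L_{\sqrt{-1}\nu})$ of Propositions \ref{p:hol-var} and \ref{p:antihol-var}, the former computes the $(1,0)$-derivative of the anti-holomorphic energy density as $e^\phi\bar\mu_F\,\nu$, and the latter, applied to $e^\phi\nu\bar\mu_F$, computes the subsequent $(0,1)$-derivative as $e^\phi(1+|\mu_F|^2)\,\nu\bar\mu$. Integrating and inserting the factor $2$ yields the mixed second variation
\[
\bar\partial_{\bar\mu}\,\partial_\nu\mathcal{E}=2\int_\Sigma e^\phi(1+|\mu_F|^2)\,\nu\bar\mu\,d^2z,
\]
so that, using $\partial\bar\partial=-\bar\partial\partial$, the form $\partial\bar\partial\mathcal{E}$ has density $-2e^\phi(1+|\mu_F|^2)$ paired against $\sum_k\nu_k\wedge\bar\nu_k$ with $\nu_k=\delta_k\mu$.

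It then remains to match this against $\sqrt{-1}\,\mathrm{Mess}_\pm^*(\omega_{\mathrm{WP}})$. Here I would use the expression in Remark \ref{r:pullback-form}, namely the density $\tfrac{\sqrt{-1}}{2}(1-|\mu_F|^2)e^\phi$ paired with $\sum_k F_\pm^*(\nu_{\pm,k})\wedge F_\pm^*(\bar\nu_{\pm,k})$, together with the change of variables \eqref{e:var-nu+}, which on $T_0(1)^\pm$ reads $F_\pm^*(\nu_\pm)=2(1-|\mu_F|^2)^{-1}(\nu-\bar\nu\mu_F^2)$. Expanding $F_\pm^*(\nu_{\pm,k})\wedge F_\pm^*(\bar\nu_{\pm,k})$ in terms of $\nu_k,\bar\nu_k$, the pure $(2,0)$ and $(0,2)$ contributions carry the factor $\nu_k\wedge\nu_k=0$ and drop out, while the $(1,1)$ part contributes $(1-|\mu_F|^4)\,\nu_k\wedge\bar\nu_k$; the elementary identity $(1-|\mu_F|^4)/(1-|\mu_F|^2)=1+|\mu_F|^2$ then converts the density $(1-|\mu_F|^2)e^\phi$ into $(1+|\mu_F|^2)e^\phi$, and after accounting for the two factors of $\sqrt{-1}$ this reproduces exactly the $(1,1)$-form found for $\partial\bar\partial\mathcal{E}$. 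This gives $\partial\bar\partial\mathcal{E}=\sqrt{-1}\,i_\pm^*\mathrm{Mess}_\pm^*(\omega_{\mathrm{WP}})=\mp\sqrt{-1}\,i_\pm^*\omega_{\mathrm{C}}$.

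The main obstacle I expect is the bookkeeping rather than any new idea: keeping the factors of $\sqrt{-1}$, the sign from $\partial\bar\partial=-\bar\partial\partial$, and the constant $2$ consistent through the twofold Lie-derivative computation, and—above all—handling the change of basis between the Beltrami differentials $\nu_k$ on $\Sigma$ and the target differentials $F_\pm^*(\nu_{\pm,k})$ so that both the scalar densities and the diagonal wedge structure $\sum_k(\,\cdot\,)\wedge(\,\cdot\,)$ line up, with the vanishing of $\nu_k\wedge\nu_k$ being what guarantees the pulled-back form is genuinely of type $(1,1)$. One must also confirm that term-by-term differentiation under the integral sign is legitimate and that $\mathcal{E}$ is finite and differentiable on $T_0(1)^\pm$, which is supplied by Theorem \ref{t:energy-finite} together with the Weil-Petersson integrability estimates used in its proof.
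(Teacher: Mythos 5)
Your proposal is correct and follows essentially the same route as the paper's proof: the mixed second variation of the anti-holomorphic energy is obtained from Propositions \ref{p:hol-var} and \ref{p:antihol-var}, and the restricted symplectic form is rewritten via the change of variables \eqref{e:var-nu+} so that the density $(1-|\mu_F|^2)e^{\phi}$ becomes $(1+|\mu_F|^2)e^{\phi}$. The only cosmetic differences are that you reduce $i_\pm^*\omega_{\mathrm{C}}$ to $\mp\, i_\pm^*\mathrm{Mess}_\pm^*(\omega_{\mathrm{WP}})$ up front rather than imposing $F_\mp^*(\nu_\mp)=0$ directly in \eqref{e:symplectic-form}, and you treat the diagonal terms directly instead of symmetrizing over $k,\ell$ as in \eqref{e:symplectic-form+1}.
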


\begin{proof}
We prove the case for  $T_0(1)^+$ as the proof for the other case follows similarly.

To analyze the variation of the anti-holomorphic energy of $F_\pm$, we consider the variation of the anti-holomorphic density $e^\phi|\mu_F|^2 \, d^2z$  along a family of quasi-conformal map $f^\epsilon$. This is given by
\begin{align*}
&- \frac{\sqrt{-1}}{2}\, {e^{\phi^{\epsilon}}\circ f^{\epsilon}\, |\mu_{F^{\epsilon}}|^2\circ f^{\epsilon} \,d f^{\epsilon}\wedge d\bar{f^\epsilon}}\\
=&- \frac{\sqrt{-1}}{2}\, e^{\phi^{\epsilon}}\circ f^{\epsilon}\,|\mu_{F^{\epsilon}}|^2\circ f^{\epsilon} \,\big( |f^\epsilon_z|^2 - |f^\epsilon_{\bar{z}}|^2 \big)\, dz\wedge d\bar{z} \\
=& - \frac{\sqrt{-1}}{2}\,  e^{\phi^{\epsilon}}\circ f^{\epsilon}\,|\mu_{F^{\epsilon}}|^2\circ f^{\epsilon} \, |f^\epsilon_z|^2 \big( 1- \mathrm{O}(\epsilon^2) |\mu_f|^2 \big)\, dz\wedge d\bar{z}
\end{align*}
where we used the equality $f^\epsilon_{\bar{z}} =  \mu_f^{\epsilon} f^\epsilon_z$ for the last equality.  Thus, the factor $|f^\epsilon_z|^2$ together
with the variational term of $e^\phi |\mu_F|^2$ precisely corresponds to the Lie derivative described in Propositions \ref{p:hol-var} and \ref{p:antihol-var}. Consequently, we obtain
\begin{equation}\label{e:dd-bar}
\begin{split}
&\partial_{\nu}  \Big(2 \int_{\mathbb{D}} e^{\psi\circ F}|F_{\bar{z}}|^2 \, d^2z \Big)=\,2\int_{\mathbb{D}}  e^\phi\, \bar{\mu}_F\, \nu \, d^2z = 2\int_{\mathbb{D}}  \Phi\, \nu \, d^2z,\\
&\bar{\partial}_\mu \partial_{\nu}  \Big(2 \int_{\mathbb{D}} e^{\psi\circ F}|F_{\bar{z}}|^2 \, d^2z \Big)=\,  2 \int_{\mathbb{D}}  e^\phi\, (1+|\mu_F|^2)\, \nu \bar{\mu} \, d^2z.
\end{split}
\end{equation}

By the condition $F_-^*(\nu_-)=0$ and \eqref{e:symplectic-form}, 

\begin{equation}\label{e:symplectic-form+}
\begin{split}
\omega_{\mathrm{C}} =&
-\frac{\sqrt{-1}}{2}\int_{\mathbb{D}} (1-|\mu_F|^2) \, e^{\phi}\, \sum_{k\in\mathbb{Z}} \Big( F^*_+(\nu_{+,k})\wedge F^*_+(\bar{\nu}_{+,k}) \Big)\, d^2z \\
=&
-\frac{\sqrt{-1}}{2}\int_{\mathbb{D}} (1-|\mu_F|^2) \, e^{\psi\circ F_+} |(F_+)_z|^2 \, \sum_{k\in\mathbb{Z}} \Big( F^*_+(\nu_{+,k})\wedge F^*_+(\bar{\nu}_{+,k}) \Big)\, d^2z\\
=&
-\frac{\sqrt{-1}}{2}\mathrm{Mess}^*_+\int_{\mathbb{D}}  \, e^{\psi}\, \sum_{k\in\mathbb{Z}} \Big( \nu_{+,k} \wedge \bar{\nu}_{+,k} \Big)\, d^2z\\
=&
-\frac{\sqrt{-1}}{2}\mathrm{Mess}^*_+\int_{\mathbb{D}}  \, e^{\psi}\, \sum_{k,\ell\in\mathbb{Z}} \Big( \nu_{+,k} \wedge \bar{\nu}_{+,\ell} \Big)\, d^2z.
\end{split}
\end{equation}
Here the last equality holds since $\{\nu_{+,k}\}_{k\in\mathbb{Z}}$ is an orthonormal basis.
By these equalities and \eqref{e:var-nu+}, 
\begin{equation}\label{e:symplectic-form+1}
\begin{split}
&
-\frac{\sqrt{-1}}{2}\mathrm{Mess}^*_+\int_{\mathbb{D}}  \, e^{\psi}\, \Big( \nu_{+,k} \wedge \bar{\nu}_{+,\ell} 
+ \nu_{+,\ell} \wedge \bar{\nu}_{+,k} \Big)\, d^2z\\
=& -\frac{\sqrt{-1}}{2}\int_{\mathbb{D}} (1-|\mu_F|^2)\, e^{\phi}\\
&\qquad \cdot   \Big(2(1-|\mu_F|^2)^{-1} (\nu_k-\bar{\nu}_k\mu_F^2)\wedge
2(1-|\mu_F|^2)^{-1}(\bar{\nu}_\ell-\nu_\ell\bar{\mu}_F^2)\Big)\, d^2z\\
& -\frac{\sqrt{-1}}{2}\int_{\mathbb{D}} (1-|\mu_F|^2)\, e^{\phi}\\
&\qquad \cdot   \Big(2(1-|\mu_F|^2)^{-1} (\nu_\ell-\bar{\nu}_\ell\mu_F^2)\wedge
2(1-|\mu_F|^2)^{-1}(\bar{\nu}_k-\nu_k\bar{\mu}_F^2)\Big)\, d^2z\\
=& -2{\sqrt{-1}}\int_{\mathbb{D}} (1-|\mu_F|^2)^{-1}\, e^{\phi}\\
&\qquad\qquad \cdot \Big( \nu_k\wedge \nu_\ell +\nu_\ell\wedge \bar{\nu}_k +\bar{\nu}_k\wedge \nu_\ell\, |\mu_F|^4 +\bar{\nu}_\ell \wedge \nu_k\, |\mu_F|^4 \Big)\, d^2z\\
=&-2\sqrt{-1}\int_{\mathbb{D}} (1+|\mu_F|^2)\, e^\phi\, \Big( \nu_k\wedge \bar{\nu}_\ell + \nu_\ell\wedge \bar{\nu}_k \Big)\, d^2z.
\end{split}
\end{equation}
Combining \eqref{e:symplectic-form+} and \eqref{e:symplectic-form+1},
\begin{equation}\label{e:pullback-form}
-\sqrt{-1}\, i^*_{+} \omega_C= -2\int_{\mathbb{D}} (1+|\mu_F|^2)\, e^\phi\, \sum_{k,\ell\in \mathbb{Z}}
\Big( \nu_k\wedge \bar{\nu}_\ell \Big)\, d^2z.
\end{equation}
Hence, by \eqref{e:dd-bar} and \eqref{e:pullback-form}, we conclude
\begin{equation*}
\partial\bar{\partial} \Big(2 \int_{\mathbb{D}} e^\phi |\mu_F|^2 \,d^2z \Big) = -\sqrt{-1}\, i^*_+ \omega_{\mathrm{C}}
\qquad \text{over} \quad T_0(1)^+.
\end{equation*}
This completes the proof for the case of $T_0(1)^+$.
\end{proof}

By Theorems \ref{t:symplectic-relation} and \ref{t:K-potential}, we have the following result:

\begin{theorem}
Over the subspace $T_0(1)^\pm$ of $T^*T_0(1)$,
\begin{equation*}
\partial\bar{\partial}\, \big(2E\big)=\partial\bar{\partial} \Big(2 \int_{\mathbb{D}} e^{\phi}|\mu_F|^2\, d^2z \Big)=\sqrt{-1}\, i^*_{\pm}\mathrm{Mess}^*_\pm \big(\omega_{\mathrm{WP}}\big).
\end{equation*}
\end{theorem}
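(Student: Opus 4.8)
The plan is to derive this statement as an immediate consequence of Theorem \ref{t:symplectic-relation} and Theorem \ref{t:K-potential}, so the work is almost entirely bookkeeping of pullbacks and signs rather than new analysis. First I would record the elementary identity $e^{\psi\circ F}|F_{\bar z}|^2 = e^{\phi}|\mu_F|^2$, which follows from $e^{\phi}=e^{\psi\circ F}|F_z|^2$ together with $\mu_F=F_{\bar z}/F_z$. This shows that the anti-holomorphic energy functional appearing in Theorem \ref{t:K-potential} is literally the functional $2\int_\Sigma e^{\phi}|\mu_F|^2\, d^2z$ occurring in the present statement, whose finiteness has already been guaranteed by Theorem \ref{t:energy-finite}.

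The core step is to pull back the symplectic relation of Theorem \ref{t:symplectic-relation} along the inclusion $i_\pm$. Applying $i^*_\pm$ to
\[
\omega_{\mathrm{C}} = -\mathrm{Mess}^*_+(\omega_{\mathrm{WP}}) + \mathrm{Mess}^*_-(\omega_{\mathrm{WP}})
\]
and using functoriality in the form $i^*_\pm\, \mathrm{Mess}^*_\mp = (\mathrm{Mess}_\mp\circ i_\pm)^*$, I would observe that on $T_0(1)^\pm$ the complementary component $\mathrm{Mess}_\mp$ is, by the very definition of $T_0(1)^\pm$ as the $\mathrm{Mess}$-preimage of $T_0(1)\times\{0\}$ (resp.\ $\{0\}\times T_0(1)$), the constant map onto the origin of the relevant $T_0(1)$ factor. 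Since a constant map pulls back every differential form to zero, we get $i^*_\pm\, \mathrm{Mess}^*_\mp(\omega_{\mathrm{WP}})=0$, which leaves
\[
i^*_\pm\, \omega_{\mathrm{C}} = \mp\, i^*_\pm\, \mathrm{Mess}^*_\pm(\omega_{\mathrm{WP}}).
\]

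Finally I would substitute this into Theorem \ref{t:K-potential}, which reads $\partial\bar\partial\big(2\int_\Sigma e^{\phi}|\mu_F|^2\, d^2z\big)=\mp\sqrt{-1}\, i^*_\pm\,\omega_{\mathrm{C}}$. Combining the two displays, the signs cancel, since $(\mp)(\mp)=+$ in each case:
\[
\partial\bar\partial\Big(2\int_\Sigma e^{\phi}|\mu_F|^2\, d^2z\Big) = \mp\sqrt{-1}\,\big(\mp\, i^*_\pm\,\mathrm{Mess}^*_\pm(\omega_{\mathrm{WP}})\big) = \sqrt{-1}\, i^*_\pm\,\mathrm{Mess}^*_\pm(\omega_{\mathrm{WP}}),
\]
which is exactly the claimed equality. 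I do not expect any genuine analytic obstacle here: the one point requiring a line of justification is the vanishing of the complementary pullback, and that is purely formal once one notes that $T_0(1)^\pm$ is a fiber of a single $\mathrm{Mess}$-component, so that component is constant upon restriction. All the substantive content—finiteness of the energy, the identity for $\omega_{\mathrm{C}}$, and the potential property on each $T_0(1)^\pm$—has already been carried out in Theorems \ref{t:energy-finite}, \ref{t:symplectic-relation}, and \ref{t:K-potential}.
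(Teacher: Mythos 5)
Your proposal is correct and is essentially the paper's own argument: the paper derives this theorem simply by citing Theorem \ref{t:symplectic-relation} and Theorem \ref{t:K-potential}, and your bookkeeping — the identity $e^{\psi\circ F}|F_{\bar z}|^2=e^{\phi}|\mu_F|^2$, the vanishing of $i^*_\pm\mathrm{Mess}^*_\mp(\omega_{\mathrm{WP}})$ because $\mathrm{Mess}_\mp$ is constant on $T_0(1)^\pm$, and the resulting sign cancellation — is exactly the content left implicit there. No gaps.
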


\begin{remark}
By Remark \ref{r:pullback-form} and the proof of Theorem \ref{t:K-potential}, the pullback 2-form $\mathrm{Mess}^*_{\pm}(\omega_{\mathrm{WP}})$
to $T_0(1)^\pm$ has the following expression:
\begin{equation}
 i^*_{\pm}\mathrm{Mess}^*_{\pm}(\omega_{\mathrm{WP}})= 2{\sqrt{-1}} \int_{\mathbb{D}} (1+|\mu_F|^2) e^\phi \,  \sum_{k,\ell\in\mathbb{Z}}\,
\nu_k\wedge \bar{\nu}_\ell \ d^2z
\end{equation}
where $\{\nu_k\}_{k\in\mathbb{Z}}$ is a basis of $T_{[\mu]} T_0(1)$. 

\end{remark}

\begin{remark}\label{r:relation-E-S}
From equation \eqref{e:dd-bar}, it follows that the holomorphic derivative of the anti-holomorphic energy functional $E$,  associated with the harmonic map $F_\pm$,  on $T_0(1)^\pm$ is given by the Hopf differential $\Phi(F_\pm)$.  
In contrast,  the holomorphic derivative of the universal Liouville action $S$ on $T_0(1)$ is expressed in terms of
the Schwarzian of a univalent function on $\mathbb{D}$, determined by the conformal welding data,  as established  in Theorem 3.1 of \cite{TT06}.
Understanding  the difference of these two holomorphic quadratic differentials is therefore essential for
the analyzing  the difference between $E$ and $S=\pi I_L$ ,  an issue that will be  addressed in future work.
\end{remark}

\appendix

\section{Anti de Sitter space of dimension 3}

Let $\mathbb{R}^{2,2}$ denote the pseudo-Euclidean 4-space with linear coordinates $\mathbf{x}=(x_1,x_2,x_3,x_4)$.  Consider the quadratic form 
\begin{equation}
q(\mathbf{x})= x^2_1+x^2_2-x^2_3-x^2_4,
\end{equation} 
and let $\langle \cdot, \cdot \rangle$ be the associated symmetric bilinear form.
The group $\mathrm{O}(2,2)$ consists of linear transformations of $\mathbf{R}^{2,2}$ that preserve $q$.
We define the hyperboloid
\begin{equation}
\mathbb{H}^{2,1}=\{\, \mathbf{x}\in \mathbb{R}^{2,2}\, | \, q(\mathbf{x}) =-1\, \}.
\end{equation}
One can verify that $\mathbb{H}^{2,1}$ is a smooth connected $3$-dimensional submanifold of $\mathbb{R}^{2,2}$. 
The tangent space $T_{\mathbf{x}} \mathbb{H}^{2,1}$ at $x\in \mathbb{H}^{2,1}$ is identified with the subspace
\begin{equation*}
\mathbf{x}^{\perp}=\{\mathbf{y}\in\mathbb{R}^{2,2}\, |\, \langle \mathbf{x},\mathbf{y} \rangle =0\}.
\end{equation*}
The restriction of  $\langle \cdot, \cdot \rangle$ to $T\mathbb{H}^{2,1}$ has the Lorentzian signature $(2,1)$,  making $\mathbb{H}^{2,1}$ a Lorentzian manifold. 

The $3$-dimensional \textit{Anti-de Sitter space} is then defined as
\begin{equation}\label{e:AdS-def}
\mathbb{A}\mathrm{d}\mathbb{S}^{2,1}:=\mathbb{H}^{2,1}/\{\pm \mathrm{Id}\},
\end{equation}
where $\mathrm{Id}$ is the identity element in $\mathrm{O}(2,2)$. The space $\mathbb{A}\mathrm{d}\mathbb{S}^{2,1}$ inherits the Lorentzian metric from $\mathbb{H}^{2,1}$ and has the constant curvature $-1$. By the definition, $\mathbb{A}\mathrm{d}\mathbb{S}^{2,1}$ can be identified with a subset of  the real projective space $\mathbb{RP}^{3}$:
\begin{equation*}
\mathbb{A}\mathrm{d}\mathbb{S}^{2,1}=\{ [\mathbf{x}]\in \mathbb{RP}^3\, | \, q(\mathbf{x})<0 \}. 
\end{equation*}
The boundary of $\mathbb{A}\mathrm{d}\mathbb{S}^{2,1}$ in $\mathbb{RP}^3$ is the projectivization of the set of lightlike vectors in $\mathbb{R}^{2,2}$,
\begin{equation}\label{e:def-AdS}
\partial \mathbb{A}\mathrm{d}\mathbb{S}^{2,1}=\{  [\mathbf{x}]\in \mathbb{RP}^3\, | \, q(\mathbf{x})=0 \}.
\end{equation}

Let $\mathrm{M}(2,\mathbb{R})$ denote the vector space of $2\times2$  real matrices. There is an isometric identification between $(\mathrm{M}(2,\mathbb{R}), -\mathrm{det})$
and $(\mathbb{R}^{2,2},q)$,
under which  the hyperboloid $\mathbb{H}^{2,1}$ corresponds to the special linear group $\mathrm{SL}(2,\mathbb{R})$.
The Lie group $\mathrm{SL}(2,\mathbb{R})$ has a bi-invariant bilinear form, known as the Killing form $\kappa$,  on its Lie algebra 
$\mathfrak{sl}(2,\mathbb{R})$. The Killing form $\kappa$ has the Lorentzian signature $(2,1)$, inducing a Lorentzian metric on $\mathrm{SL}(2,\mathbb{R})$, which we denote by 
$g_{\kappa}$.  From \eqref{e:AdS-def}, one can verify that 
the Anti-de Sitter space $\mathbb{A}\mathrm{d}\mathbb{S}^{2,1}$ is naturally identified with $\mathrm{PSL}(2,\mathbb{R})$ equipped with  the Lorentzian metric $\frac18 g_{\kappa}$. 

The group $\mathrm{SL}(2,\mathbb{R})\times\mathrm{SL}(2,\mathbb{R})$ acts on $\mathrm{M}(2,\mathbb{R})$ via 
\begin{equation*}
 (\alpha,\beta)\cdot\gamma= \alpha\circ\gamma \circ \beta^{-1}\qquad \text{for} \quad \gamma\in  \mathrm{M}(2,\mathbb{R}).
\end{equation*}
This action preserves the quadratic form $-\mathrm{det}\cong q$, leading to the identification
\begin{equation*}
\mathrm{Isom}_0(\mathbb{H}^{2,1})\cong \mathrm{SO}_0(\mathrm{M}(2,\mathbb{R}),q) \cong \large(\mathrm{SL}(2,\mathbb{R})\times \mathrm{SL}(2,\mathbb{R}) \large)/K,
\end{equation*}
where $K:=\{(\mathrm{Id},\mathrm{Id}),(-\mathrm{Id},-\mathrm{Id})\}$.  Consequently, the connected component of the isometry group of $\mathbb{A}\mathrm{d}\mathbb{S}^{2,1}$ is given by

\begin{equation*}
\mathrm{Isom}_0(\mathbb{A}\mathrm{d}\mathbb{S}^{2,1})\cong \mathrm{PSL}(2,\mathbb{R})\times \mathrm{PSL}(2,\mathbb{R}).
\end{equation*}

By \eqref{e:def-AdS},  the boundary of  $\mathbb{A}\mathrm{d}\mathbb{S}^{2,1}$  in the projectivized space  $ \mathrm{P}(\mathrm{M}(2,\mathbb{R}))$ is given by
\begin{equation}
\partial \mathbb{A}\mathrm{d}\mathbb{S}^{2,1}=\{  [X]\in \mathrm{P}(\mathrm{M}(2,\mathbb{R})) \, | \, \mathrm{rank}(X)=1 \}.
\end{equation}
This boundary admits the following homeomorphism:
\begin{equation}
\partial \mathbb{A}\mathrm{d}\mathbb{S}^{2,1} \, \to \, \mathbb{R}\mathrm{P}^1\times\mathbb{R}\mathrm{P}^1,
\end{equation}
which is explicitly defined by
\begin{equation}
[X] \mapsto (\mathrm{Im}(X),\mathrm{Ker}(X)).
\end{equation}
Timelike geodesics in  $\mathbb{A}\mathrm{d}\mathbb{S}^{2,1}$ are given by
\begin{equation*}
L_{p,q}=\{ \gamma\in \mathrm{PSL}(2,\mathrm{R})\, |\, \gamma(q)= p\,\}
\end{equation*}
for some points $p,q\in \mathbb{D}$.

For a spacelike conformal embedding $\sigma:\mathbb{D}\to \mathbb{A}\mathrm{d}\mathbb{S}^{2,1}$, 
let $\Sigma$ denote the image $\sigma(\mathbb{D})\subset  \mathbb{A}\mathrm{d}\mathbb{S}^{2,1}$. 
The associated \textit{Gauss map} 
\begin{equation}\label{e:A-Gauss}
G:\Sigma\to \mathbb{D}\times\mathbb{D}
\end{equation} 
is defined by
$$
G(x)=(p,q),
$$ where $L_{p,q}$ is the timelike geodesic orthogonal to the tangent space  of $\Sigma$ at $x$.

\end{document}